\newtheorem{thm}{Theorem}[section]
\newtheorem{cor}[thm]{Corollary}
\newtheorem{lem}[thm]{Lemma}
\newtheorem{prop}[thm]{Proposition}
\theoremstyle{remark}
\newtheorem{defn}{Definition}
\newtheorem{rmk}{Remark}
\def\Z{{\mathbb Z}}
\def\Q{{\mathbb Q}}
\def\R{{\mathbb R}}
\def\C{{\mathbb C}}
\def\T{{\mathbb T}}
\def\H{{\mathbb H}}
\def\A{{\mathbb A}}
\def\OK{{\mathcal O}_K}
\def\RC{{\widehat{K}}}
\def\P{{\mathbb P}}
\def\M{{\mathcal M}}
\def\Hyp{{\mathcal H}}
\def\SS{{\mathcal S}}
\def\G{{\mathcal G}}
\def\J{{\mathcal J}}
\def\PP{{\mathcal P}}
\def\a{{\mathfrak a}}
\def\b{{\mathfrak b}}
\def\ab{(\a,\b)}
\def\Xab{X_{\a,\b}}
\def\d{{\mathfrak d}}
\def\m{{\mathfrak m}}
\def\n{{\mathfrak n}}
\def\p{{\mathfrak p}}
\def\q{{\mathfrak q}}
\def\Go{\Gamma_0}
\def\Gon{\Gamma_0(\n)}
\def\Gi{\Gamma_1}
\def\Gin{\Gamma_1(\n)}
\def\RR{{\OK\oplus\OK}}
\def\KK{{K\oplus K}}
\DeclareMathOperator{\diag}{diag}
\DeclareMathOperator{\adj}{adj}
\DeclareMathOperator{\Mat}{Mat}
\DeclareMathOperator{\GL}{GL}
\DeclareMathOperator{\SL}{SL}
\DeclareMathOperator{\SU}{SU}
\DeclareMathOperator{\SO}{SO}
\DeclareMathOperator{\Cl}{Cl}
\def\<#1>{\left<#1\right>}
\newcommand{\mat}[4]{\left(\begin{matrix} %
                   #1 & #2 \\ #3 & #4 %
                  \end{matrix}\right)}
\def\Magma{{\sc Magma}}
\begin{document}

\title[Hecke operators and Modular Forms over Number Fields]{Hecke operators, Hecke Eigensystems, and Formal Modular Forms over Number Fields}

\author{J. E. Cremona}
\address{
Mathematics Institute, University of
Warwick, Coventry CV4 7AL, UK.
}
\email{J.E.Cremona@warwick.ac.uk}
\date{\today}

\begin{abstract}
  We develop an explicit theory of formal modular forms over arbitrary
  number fields~$K$, as functions of modular points.  We define
  modular points for $\Go(\n)$ and~$\Gi(\n)$, where the level~$\n$ is
  an integral ideal of~$K$; Hecke operators and generalized
  Atkin-Lehner operators as functions of modular points; and
  associated Hecke eigensystems.  We show how complete eigensystems
  may be recovered, uniquely up to unramified quadratic twist, from
  their restrictions to principal Hecke operators, and we give
  explicit formulas for principal operators suitable for machine
  computation. These have been implemented by the author in the case
  of imaginary quadratic fields, and used in his systematic
  computation of Bianchi cusp forms, which are available in the
  L-functions and modular forms database (LMFDB).  While our
  description incorporates the classical theory for $K=\Q$, and also
  extends work of the author and his students for imaginary quadratic
  fields, it applies to arbitrary number fields, and may be useful
  in the computation of spaces of automorphic forms for $\GL(2,K)$
  over number fields, whether via modular symbols or other
  methods.
\end{abstract}

\maketitle

\section{Introduction} \label{sec:intro}
Let $K$ be an arbitrary number field with ring of integers~$\OK$.
Spaces of cusp forms of weight~$2$ (and higher weights) for $\GL(2,K)$
have been computed for certain classes of fields~$K$ using methods
based on modular symbols.  For $K=\Q$, such computations are
classical: see \cite{JCbook2} and \cite{Stein-book}.  For imaginary
quadratic fields, the methods have been developed by the author and
his students: see \cite{JChyp}, the author's thesis~\cite{JCthesis},
and the theses of Whitley \cite{Whitley}, Bygott \cite{JBthesis}, and Lingham
\cite{LinghamThesis}.  For some real quadratic fields, explicit
computations have been carried out by students of Darmon, notably
Demb\'el\'e \cite{DembeleThesis} for $\Q(\sqrt{5})$; using different
methods, extensive computations of Hilbert modular forms have been
made by Voight and others for totally real fields of degree up
to~$6$. Lastly, Gunnells and Yasaki have computed cusp forms over two
fields of mixed signature, namely the cubic field of
discriminant~$-23$ and~$\Q(\zeta_5)$.  In each case, one is interested
in explicitly computing the action of the algebra of Hecke operators
on the space and determining the associated eigensystems.

In this paper we give a systematic treatment of those parts of the
theory which are purely algebraic in nature, and apply to all number
fields.  For much of what follows, we need only assume that $\OK$ is a
Dedekind domain; special properties enjoyed by the ring of integers of
a number field, such as finiteness of the class group and of residue
fields, will only be used where necessary.

We will develop an explicit theory of \emph{formal modular forms} over
the field~$K$, as functions of \emph{modular points}.  We will define:
modular points for $\Go(\n)$ and~$\Gi(\n)$, where the level~$\n$ is an
integral ideal of~$K$; Hecke operators and generalized Atkin-Lehner
operators as functions of modular points; and associated Hecke
eigensystems.  We will show (see~\Cref{thm:eig-systems}) how complete
eigensystems may be recovered, uniquely up to unramified quadratic
twist, from their restrictions to principal Hecke operators, and we
give explicit formulas for principal operators suitable for machine
computation. These have been implemented by the author in the case of
imaginary quadratic fields, and used in his systematic computation of
Bianchi cusp forms, which are available\footnote{To date (2026), only
Bianchi newforms with rational Hecke eigenvalues are in the LMFDB.} in
the L-functions and modular forms database (LMFDB).  While our
description incorporates the classical theory for $K=\Q$, and also
extends work of the author and his students for imaginary quadratic
fields, it may be useful more generally in the computation of
spaces of automorphic forms for $\GL(2,K)$ over other number fields,
whether via modular symbols or other methods.

\subsection{Outline of the paper.}

In the next section, we recall some basic concepts and notation
introduced by the author and Aran\'es in~\cite{JC+MA} concerning
congruence subgroups, $\ab$-matrices, and M-symbols.  In
\Cref{sec:lattices} we introduce lattices and modular points for a
general number field~$K$, with respect to the congruence
subgroups~$\Go(\n)$ and~$\Gi(\n)$, for an arbitrary level~$\n$. In
\Cref{sec:Hecke}, we introduce Hecke operators as operators on the
free module generated by modular points, and define the grading of the
Hecke algebra by the class group. Hecke eigensystems are the subject
of~\Cref{sec:eigenvalue-systems}.  A key result
(\Cref{thm:eig-systems}) is the determination of the extent to which
such an eigensystem is determined by its restriction to the principal
subalgebra of the Hecke algebra.  This has important practical
consequences, since in explicit computations, such as using modular
symbols, computing principal Hecke operators is simpler than the
general case; our result implies that \emph{all} information about
Hecke eigensystems may be recovered from the eigenvalues of principal
operators.  The final~\Cref{sec:hecke-matrices} makes explicit the
matrices defining principal Hecke and Atkin-Lehner operators and
principal combinations of these.  These have been implemented by the
author, and used in practice, over imaginary quadratic fields, to
determine Hecke eigensystems over arbitrary imaginary quadratic
fields, in his {\tt C++} package~{\tt bianchi-progs}
\cite{bianchi-progs}, and also in \Magma\ by K.~Thalagoda; see her
thesis~\cite{KalaniThesis}, and our forthcoming paper~\cite{CTY}
(joint with Thalagoda and Yasaki), for more details of this
application.

\section{Notation and basic definitions}
\label{sec:basic}
Let $K$ be an arbitrary number field, with ring of integers~$\OK$, and
unit group~$\OK^\times$.  Let $\Mat_2(K)$ and $\Mat_2(\OK )$ denote
the algebras of~$2\times2$ matrices with entries in~$K$ or~$\OK$
respectively, and  $\GL(2,K)$ and $\Gamma:=\GL(2,\OK)$ the
corresponding multiplicative groups.

Nonzero ideals of~$\OK$ are denoted $\a, \b, \dots, \n$ and prime
ideals by~$\p, \q$.  The norm of an ideal is $N(\a)=\#(\OK/\a)$,
satisfying $N(\a\b)=N(\a)N(\b)$ for all~$\a,\b$.  We have
$\#(\OK/\a)^\times = \varphi(\a)$, where
\[
   \varphi(\a) := N(\a)\prod_{\p\mid\a}\left(1-N(\p)^{-1}\right).
\]

Associated to each nonzero integral ideal $\n$ of~$\OK$, called the
\emph{level}, we have the standard congruence subgroups\footnote{Our
base group is $\Gamma=\GL(2,\OK)$, not~$\SL(2,\OK)$, so we define
$\Gon$ and $\Gon$ accordingly.} $\Gon$, $\Gin$ and~$\Gamma(\n)$ of
$\Gamma$.  We will only be concerned with $\Gon$ and~$\Gin$ here:
\begin{align*}
\Gon =
\left\{\mat{a}{b}{c}{d}\in\Gamma\mid c\in\n\right\};
\\
\Gin =
\left\{\mat{a}{b}{c}{d}\in\Gamma\mid c, d-1\in\n\right\}.
\end{align*}
We have $[\Gamma:\Gon] = \psi(\n)$, where
\[
   \psi(\n) := N(\n)\prod_{\p\mid\n}\left(1+N(\p)^{-1}\right),
\]
and $[\Gon:\Gin] = \varphi(\n)$.  Note that $\varphi$
and~$\psi$ are multiplicative in the sense
that~$\varphi(\m\n)=\varphi(\m)\varphi(\n)$ when $\m,\n$ are coprime,
and similarly for~$\psi$.  This, and the index formulas, are proved
exactly as in the case $K=\Q$.

The group of fractional ideals coprime to the level~$\n$ is
denoted~$\J_K^{\n}$.

For a level~$\n$, divisors $\q$ of~$\n$ such that $\q$ and~$\n\q^{-1}$
are coprime are called \emph{exact divisors} of~$\n$; this relation is
indicated by~$\q\mid\mid\n$.

We will use standard facts about finitely-generated modules over the
Dedekind Domain~$\OK$.  Specifically we are concerned with
\emph{$\OK$-lattices of rank~$2$}, which are $\OK$-submodules of $\KK$
of full rank. (Below we will also consider more
general~$\OK$-lattices.)  Elements of~$\OK$-lattices are written as
row vectors, with matrices acting on the right.  For both the
theoretical results we will need and an algorithmic approach to this
theory as well as solutions to many computational problems which arise
in practice, see Chapter~1 of Cohen's book~\cite{Cohen2}.

The group~$\Gamma$ may be characterized through its action on
$\OK$-lattices, as the set of all matrices~$\gamma\in\GL(2,K)$
satisfying $(\RR)\gamma=\RR$, and the following similar
characterization of~$\Gon$ is easily checked:
\begin{align*}
\Gon & = \{\gamma\in\Gamma\mid (\n\oplus \OK)\gamma=(\n\oplus \OK)\}\\ & =
\{\gamma\in\GL(2,K)\mid (\RR)\gamma=\RR \ \text{and}\ (\n\oplus
\OK)\gamma=(\n\oplus \OK)\}.
\end{align*}
Here, we could also replace~$\n\oplus \OK$ by~$\OK\oplus\n^{-1}$.
Thus $\Gon$ is the right stabilizer of the pair of lattices $(L,L')$
where $L=\RR$ and $L'=\OK\oplus\n^{-1}$, so that $L'\supseteq L$ and
$L'/L\cong \OK/\n$ as $\OK$-modules.  The pair~$(L,L')$ is an example
of a \emph{modular point} for $\Gon$ which will be studied in detail
in~\Cref{sec:lattices} below.

Alternatively, in terms of the
subalgebra~$A_0(\n)=\mat{\OK}{\OK}{\n}{\OK}$ of~$\Mat_2(\OK)$, we see
that $\Gon$ is the intersection of this with~$\Gamma$, consisting of
those matrices~$\gamma$ such that~$A_0(\n)\gamma=A_0(\n)$.

To characterize~$\Gin$ in the same way, we need to rigidify the
lattice pair~$(L,L')$ by fixing an $\OK$-module generator of the
quotient~$L'/L$ (which is a cyclic $\OK$-module, being isomorphic
to~$\OK/\n$).  To do this, let $n_0\in\n^{-1}$ generate $\n^{-1}/\OK$
(so that $\n^{-1}=\OK+n_0\OK$) and set $\beta_0=(0,n_0)\in L'$.  Then
$L'=L+\OK\beta_0$, and $\Gin$ is the subgroup of~$\Gamma$
fixing~$\beta_0\pmod{L}$:
\begin{align*}
\Gin  =
  \{\gamma\in\GL(2,K)\mid & (\RR)\gamma=\RR, \quad\text{and}\\
  & \beta_0\gamma=\beta_0\pmod{\OK\oplus \OK}\}
\end{align*}
since for $x\in \OK$ we have $n_0x\in \OK\iff x\in\n$.  This
characterization is independent of the choice of~$n_0$.  When $\OK=\Z$
and $\n=N\Z$, one usually takes $n_0=\frac{1}{N}$, but there is no
canonical choice in general.

\smallskip

For nonzero $a,b\in \OK$ (or~$K$) we denote by~$\<a>$ and $\<a,b>$ the
(fractional) ideals $a\OK$ and $a\OK+b\OK$.  The class of a fractional
ideal~$\a$ in the class group $\Cl(K)$ is denoted by~$[\a]$. The order
of the class group~$\Cl(K)$ is denoted~$h(\OK)$ or simply~$h$.

Among the elementary properties of Dedekind Domains we will use are
\begin{enumerate}
\item For any ideal~$\a$ and any nonzero $a\in\a$ there exists
  $b\in\a$ with $\a=\<a,b>$;
\item Every ideal class contains an ideal coprime to any given ideal.
\end{enumerate}

\subsection{\texorpdfstring{$\ab$-matrices}{(a,b)-matrices}} 
\label{sec:ab-mats}

Certain matrices in $\Mat_2(\OK)$ called $\ab$-matrices were
introduced in \cite{JC+MA} and will play an important role in our theory.
Over~$\Q$, or more generally when the class number is~$1$, these
matrices may not be explicitly visible, as they function like matrices in Hermite
Normal Form.  In \cite{JC+MA}, they were used to study and count the
orbits of~$\Gon$ and~$\Gin$ on $\P^1(K)$.

$\ab$-matrices are associated with any pair of ideals~$\a,\b$ in
inverse ideal classes, so that~$\a\b$ is principal, and
$\a\oplus\b\cong \RR$.  An~$\ab$-matrix is any matrix
$M\in\Mat_2(\OK)$ such that
\[
   (\RR)M=\a\oplus\b.
\]
The set of all~$\ab$-matrices is denoted~$\Xab$, and for all pairs
of integral ideals~$\ab$ we set
\begin{align}
  \label{eqn:Delta-ab-defn}
\Delta(\a,\b) &= \left\{\mat{x}{y}{z}{w}\mid x,w\in
\OK, y\in\a^{-1}\b, z\in\a\b^{-1}, xw-yz\in \OK^\times\right\}\\
&=\left\{\gamma\in \GL(2,K)\mid(\a\oplus\b)\gamma=\a\oplus\b\right\},\nonumber
\end{align}
by Proposition~2.4 of~\cite{JC+MA}.

In the case $\a=\b=\OK$, an $\ab$-matrix is simply an element
of~$\Gamma$.  From~\cite{JC+MA} we recall the following facts
about~$\ab$-matrices:
\begin{enumerate}
\item
$\ab$-matrices exist whose first column is an arbitrary pair of
  generators of~$\a$.  In particular, the lower left entry may be
  chosen to be any nonzero element of~$\a$, so may be taken to lie in
  any other ideal~$\n$ that we choose.  An~$\ab$-matrix whose
  $(2,1)$-entry lies in~$\n$ is called an~\emph{$\ab$-matrix of
  level~$\n$}.
\item \( \Xab = \Gamma M = M\Delta\ab\) for all~$M\in \Xab$.
\end{enumerate}

\subsection{\texorpdfstring{M-symbols and coset representatives for  $\Go(\n)$}
  {M-symbols and coset representatives}}
\label{sec:M-symbols}
As in Section~3 of~\cite{JC+MA}, for an ideal~$\n$ of~$\OK$, an
\emph{M-symbol} (or \emph{Manin symbol}) \emph{of level~$\n$} is an
element $(c:d)\in\P^1(\OK/\n)$.  Here, $c,d\in\OK$ are such that the
ideal~$\<c,d>$ is coprime to~$\n$, and
\[
(c_1:d_1) = (c_2:d_2) \iff c_1d_2-c_2d_1\in\n.
\]
For brevity we will write $\P^1(\n)$ for $\P^1(\OK/\n)$.  The
M-symbols of level~$\n$ are in bijection with the set of cosets
of~$\Go(\n)$ in~$\Gamma$, since
\[
\mat{a_1}{b_1}{c_1}{d_1}\Go(\n) =
\mat{a_2}{b_2}{c_2}{d_2}\Go(\n) \iff (c_1:d_1) = (c_2:d_2)
\quad\text{in $\P^1(\n)$}.
\]
In other words, the map $\Gamma \to \P^1(\n)$ is surjective, with
fibres equal to the cosets of~$\Go(\n)$.

Since the reduction map~$\SL(2,\OK)\to\SL(2,\OK/\n)$ is
surjective\footnote{Note, however, that the
map~$\Gamma=\GL(2,\OK)\to\GL(2,\OK/\n)$ is not in general surjective,
since $\OK^*\to(\OK/\n)^*$ is not.}, for all~$c,d\in\OK$ such that
$\<c,d>+\n=\OK$ there exist~$c',d'\in\OK$ such that $\<c',d'>=\OK$ and
$(c:d)=(c':d')$. Writing $1=ad'-bc'$ with~$a,b\in\OK$, we obtain a
section from~$\P^1(\n)$ to~$\Gamma$, the implementation of which is
crucial when doing computations.

More generally, suppose that the ideals~$\m,\n$ are coprime.  Then
M-symbols of level~$\n$ also give coset representatives
of~$\Go(\m\n)$ in~$\Go(\m)$.  To see this, note that by the
Chinese Remainder Theorem we may identify~$\P^1(\m\n)$ with
$\P^1(\m)\times\P^1(\n)$.  Hence, for each~$(c:d)\in\P^1(\n)$, we may
find $c',d'\in\OK$ which are coprime and such that
\[
(c':d')=(c:d)
\quad
\text{in $\P^1(\n)$, and}
\quad
(c':d')=(0:1)
\quad
\text{in $\P^1(\m)$}.
\]
Then, lifting the M-symbol $(c':d')\in\P^1(\m\n)$ to~$\Gamma$ as
before results in a matrix in~$\Go(\m)$ representing the same
$\Go(\n)$-coset as $(c:d)$.

The algorithmic procedures outlined here play an important part in the
construction of explicit matrices giving the action of Hecke and related
operators below.

\section{Lattices and Modular points} \label{sec:lattices}

One description of classical modular forms for $\SL(2,\Z)$ is to view
them as functions on classical lattices---discrete rank two
$\Z$-submodules of $\C$---satisfying certain properties.  This approach
may be extended to modular forms for $\Go(N)$ or $\Gi(N)$ by
giving the lattices extra ``level~$N$'' structure.  For example this
approach is taken in the books by Serre, Koblitz and Lang: see
\cite[VIII~\S2]{SerreCA} for the case of level~$1$ and
\cite[III~\S5]{KoblitzECMF} and \cite[VII]{LangModularForms} for
$\Go(N)$ and $\Gi(N)$.  This description of modular forms
enables one to define Hecke operators via their action on lattices.

In his unpublished thesis \cite[Chap.~7]{JBthesis}, Bygott began to extend this
approach to number fields, the emphasis there being on the imaginary
quadratic case.  We follow and extend that approach here.  The first
step is to consider $\OK$-lattices more general than $\OK$-submodules
of $\KK$; then we introduce the level structure.  We start with the
case of $\Go(N)$-structure, and then consider~$\Gi(N)$.

Let $\RC=K\otimes_{\Q}\R\cong\R^{r_1}\oplus\C^{r_2}$, where $r_1$
and~$r_2$ are numbers of real and complex conjugate pairs of
embeddings of~$K$ into its Archimedean completions, $\R$ or~$\C$.  The
components of~$\RC$ will be denoted $K_{\infty,r}$ for $1\le r\le
r_1+r_2$, and the real and complex embeddings of~$K$ give a diagonal
embedding~$K\hookrightarrow\RC$.

Define $\G=\GL(2,\RC)$; this group acts on the right, component-wise,
on the space of row vectors~$\RC^2=\RC\oplus\RC$.

\begin{defn}
An $\OK$-lattice~$L \subset \RC^2$ is an $\OK$-submodule of~$\RC^2$
satisfying all of the following conditions:
\begin{enumerate}
\item $\RC L = \RC^2$;
\item $F_1\subseteq L\subseteq F_2$ where $F_1$, $F_2$ are free
  $\OK$-modules of rank~$2$;
\item $L$ is finitely-generated and $\dim_K(KL)=2$.
\end{enumerate}
\end{defn}
(In fact, conditions (2) and~(3) are equivalent.)  So
$E=KL$ is a $2$-dimensional $K$-vector subspace of~$\RC^2$, hence
isomorphic to~$\KK$, but in general different lattices lie in
different such subspaces.  Each lattice~$L$ has $r_1+r_2$ components,
the $r$th component contained in $K_{\infty,r}^2$, which is either
$\R\oplus\R$ or~$\C\oplus\C$.  It is sometimes convenient to
identify these with~$\C=\R+\R i$ and $\H=\C+ \C j$ (the algebra of
Hamiltonian quaternions) respectively.

The following elementary facts about lattices are stated without
proof, as all follow easily from the theory of finitely-generated
modules over Dedekind Domains.

Every lattice~$L$ is isomorphic as $\OK$-module to $\OK\oplus\a$ for
some ideal~$\a$, whose class in the class group~$\Cl(K)$ is the
(Steinitz) \emph{class} of~$L$, which we denote~$[L]$.  The structure
theorem for $\OK$-modules implies that two lattices are isomorphic as
$\OK$-modules if and only if they have the same class, since they are
both torsion-free and of rank~$2$.

If $L$ is a lattice and $U\in\G$, then $LU$ is also a lattice,
isomorphic to~$L$ and hence of the same class.  Conversely, any
$\OK$-module isomorphism between lattices $L_1\to L_2$ extends first
to a $K$-linear isomorphism $KL_1\to KL_2$, and then to a $\RC$-linear
automorphism of~$\RC^2$, so is represented by a
matrix~$U\in\G$.  Hence
\[
[L_1] = [L_2] \iff L_1 \cong L_2 \iff L_2=L_1U\qquad\text{for some
  $U\in\G$}.
\]
One may also think of $L$ and $LU$ as being the same lattice,
expressed in terms of different bases for~$\RC^2$, with $U$ being the
change of basis matrix.

In the real case, when~$\RC=\R$ and $\G=\GL(2,\R)$, we will always use
an \emph{oriented basis}~$v_1,v_2\in\R^2$ for each lattice~$L$,
meaning that the matrix~$U$ with rows~$v_1,v_2$ has positive
determinant. Any basis can be changed into an oriented basis by
negating one of the basis vectors, if necessary.

If $\a$ is a fractional ideal of~$\OK$, then the module $\a L$,
consisting of all finite sums~$\{\sum_ia_ix_i\mid a_i\in\a, x_i\in
L\}$, is also a lattice, of class $[\a]^2[L]$.  For example, if $[L]$
is a square, say $[L]=[\a]^2$, then $L\cong\a\oplus\a$, so
$\a^{-1}L\cong\OK\oplus\OK$, a free lattice.

If $L$, $L'$ are two lattices with $L'\supseteq L$ then $L'/L$ is a
finite torsion $\OK$-module, hence is isomorphic to $\OK/\a_1\oplus
\OK/\a_2$ for unique ideals $\a_1$, $\a_2$ of~$\OK$ satisfying
$\a_1\mid\a_2$, called the \emph{elementary divisors} of~$L'/L$.  The
product $\a=\a_1\a_2$ is called the \emph{norm ideal} of $L'/L$ or the
\emph{index ideal} of~$L$ in~$L'$, denoted $[L':L]$; it determines the
cardinality $\#(L'/L)=N(\a)=\#(\OK/\a)$, and $\a_2$ is the annihilator
of $L'/L$.  The classes are related by $[L']=[\a][L]$.  For any
fractional ideal~$\b$ we have $\b L'/\b L\cong L'/L$.  Moreover, there
exist fractional ideals $\b_1,\b_2$ and a $\RC$-basis $v_1,v_2$
of~$\RC^2$ such that
\[
     L'=\b_1v_1\oplus\b_2v_2; \qquad L=\a_1\b_1v_1\oplus\a_2\b_2v_2;
\]
we can write this alternatively as
\[
     L'=(\b_1\oplus\b_2)U; \qquad L=(\a_1\b_1\oplus\a_2\b_2)U
\]
where $U\in\G$ is the matrix with rows~$v_1,v_2$.  By scaling~$v_1$
and~$v_2$, we may replace $\b_1,\b_2$ by integral ideals.  Now we have
$L'/L\cong \OK/\a_1\oplus \OK/\a_2$.

\subsection{Standard Lattices}
We wish now to choose a standard lattice in each Steinitz class.  One
way to do this (as in Bygott's thesis \cite{JBthesis}), which works
for any field, is to take ideals $\p_i$ representing the ideal
classes, such as $\p_1=\OK$ and $\p_i$ prime for $i>1$, and use the
lattices $\p_i\oplus \OK$.  When working with level~$\n$ structure, we
may also wish to assume that the $\p_i$ are coprime to~$\n$.  However,
there is another choice available for lattices whose class is
square, since a class of the form $[\q]^2$ may be represented by the
lattice~$\q\oplus\q=\q(\RR)$.  Again, we may wish to choose $\q$ to be
coprime to the current level~$\n$.  When the class number is odd, we
can use this second choice for all ideal classes, resulting in a
simplification of the theory in several places.

In the general case, we combine these two approaches.  Write
$h=\#\Cl(K)=h_2h_2'$ where $h_2=\#\Cl[2]=[\Cl(K):\Cl(K)^2]$.  For
$1\le i\le h_2$ we fix ideals~$\p_i$ whose classes~$[\p_i]$ represent
the cosets $\Cl(K)/\Cl(K)^2$, and for $1\le j\le h_2'$ we fix~$\q_j$
whose classes~$[\q_j]^2$ comprise~$\Cl(K)^2$.  We always take
$\p_1=\q_1=\OK$; the other representatives may if convenient be taken
to be primes, or coprime to a given level, or both.  Then the set of
all ideal classes is
\[
  \Cl(K) = \{c_{ij}=[\p_i][\q_j]^2\mid 1\le i\le h_2, 1\le j\le h_2'\}.
\]
When the class number is odd we have $h_2=1$ and then all
representatives are of the form~$[\q_j]^2$; at the other extreme, if
$\Cl(K)$ has exponent~$2$ then $h_2=h$ and all are of the
form~$[\p_i]$.

For each class~$c_{ij}\in\Cl(K)$, we define the \emph{standard
lattice in class~$c_{ij}$} to be
\[
   L_{ij} = \p_i\q_j\oplus\q_j = \q_j(\p_i\oplus \OK).
\]
Since every lattice is isomorphic to $L_{ij}$ for a unique
pair~$(i,j)$, the set of lattices in class~$c_{ij}$ is $\{L_{ij}U\mid
U\in\G\}$.  The matrix $U$ in these representations is of course not
unique, as we now make precise.

As a special case of~\ref{eqn:Delta-ab-defn} we set
\[
\Gamma_\p=\Delta(\p,\OK)=\left\{\mat{a}{b}{c}{d}\mid a,d\in \OK,
b\in\p^{-1},c\in\p, ad-bc\in \OK^\times\right\}.
\]
For brevity, we set $\Gamma_i=\Gamma_{\p_i}$.  Note that if
$\p=\<\pi>$ were principal then $\Gamma_\p$ would be simply the
conjugate of~$\Gamma$ by~$\mat{\pi}{0}{0}{1}^{-1}$.  The following
result follows from Proposition~2.4 of~\cite{JC+MA}, after observing
that for any lattice~$L$, if $LU=L$ for some~$U\in\G$, then the
entries of~$U$ lie in~$K$.

\begin{prop}
Let $U\in\G$.  Then $L_{ij}U=L_{ij}$ if and only if $U\in
\Gamma_i$ (independent of~$j$); hence
\[
  L_{ij}U = L_{ij}U' \iff U'U^{-1}\in\Gamma_i.
\]
The set of lattices in class~$c_{ij}$ is in bijection with the coset
space~$\Gamma_i\backslash\G$.\qed
\end{prop}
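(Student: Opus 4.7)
The plan is to reduce the claim directly to the stabilizer characterization of $\Delta(\a,\b)$ given in equation~\eqref{eqn:Delta-ab-defn} (cited from Proposition~2.4 of~\cite{JC+MA}), after clearing the $\q_j$-factor from the standard lattice.

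First I would verify the parenthetical observation: if $U\in\G$ satisfies $L_{ij}U=L_{ij}$, then $U\in\GL(2,K)$. The point is that the $K$-span $E=KL_{ij}\subset\RC^2$ is a two-dimensional $K$-subspace, and since $L_{ij}U=L_{ij}\subset E$ together with $L_{ij}\subset E$ forces $EU=E$; picking a $K$-basis of $E$ (for instance any $\OK$-basis of $KL_{ij}\cong\KK$) then realizes the restriction of $U$ as an element of $\GL(2,K)$. Equivalently, since $L_{ij}$ contains an $\OK$-basis of $\KK\subset\RC^2$ and $U$ permutes it into $\KK$, all entries of $U$ lie in~$K$.

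Now assume $U\in\GL(2,K)$. Because right multiplication by $U$ is $K$-linear and the fractional ideal $\q_j$ is central in the action of $\OK$ on $\RC^2$, we have
\[
   L_{ij}U = \q_j(\p_i\oplus\OK)\,U = \q_j\bigl((\p_i\oplus\OK)U\bigr).
\]
Multiplying by $\q_j^{-1}$ (an invertible operation on $\OK$-submodules of~$\KK$), the equality $L_{ij}U=L_{ij}$ is equivalent to $(\p_i\oplus\OK)U=\p_i\oplus\OK$. By the second form of~\eqref{eqn:Delta-ab-defn} with $\a=\p_i$ and $\b=\OK$, this is exactly the condition $U\in\Delta(\p_i,\OK)=\Gamma_{\p_i}=\Gamma_i$. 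Notably, $\q_j$ has disappeared from the condition, proving independence from~$j$.

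The second equivalence is then formal: $L_{ij}U=L_{ij}U'$ iff $L_{ij}(U'U^{-1})=L_{ij}$ iff $U'U^{-1}\in\Gamma_i$. For the final bijection statement, recall from the discussion preceding the proposition that the lattices in class $c_{ij}$ are exactly $\{L_{ij}U\mid U\in\G\}$, so the orbit map $U\mapsto L_{ij}U$ is a surjection from $\G$ onto this set whose fibres are, by the equivalence just established, the right cosets $\Gamma_i U$. This descends to a bijection $\Gamma_i\backslash\G \xrightarrow{\sim} \{\text{lattices of class } c_{ij}\}$.

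The only non-routine point is the first paragraph --- ensuring that a real/complex stabilizer $U\in\G$ of $L_{ij}$ is forced to be $K$-rational --- but this is the observation the author explicitly flags, and everything else is a one-line application of the stabilizer characterization from~\cite{JC+MA} combined with the fact that $\q_j$ commutes through the action of~$U$.
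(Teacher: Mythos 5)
Your proposal is correct and follows the same route the paper indicates: verify that a stabilizer $U\in\G$ of a lattice must have entries in $K$, clear the central factor $\q_j$, and then invoke the stabilizer characterization of $\Delta(\p_i,\OK)$ from Proposition~2.4 of~\cite{JC+MA}, with the remaining claims falling out formally. Your filling-in of the $K$-rationality observation is sound and exactly what the paper leaves to the reader.
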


Hence when the class number is odd, each class of lattices is
parameterised by the \emph{same} space~$\Gamma\backslash\G$,
just as in the case of trivial class group.

\subsection{Modular points for \texorpdfstring{$\Gon$}{Gamma0(n)}}
\begin{defn}
A \emph{modular point for $\Gon$} is a pair $(L,L')$ of
lattices\footnote{In \cite{JBthesis}, the pair $(L,S)$ where $S=L'/L$
is called a modular point for $\Gon$ rather than $(L,L')$, but this is
only a cosmetic difference.} such that $L'\supseteq L$ and $L'/L\cong
\OK/\n$.  We call $L$ the \emph{underlying lattice} of the pair, and
$[L]$ its \emph{class}.  The set~$\M_0(\n)$ of modular points for
$\Gon$ is the disjoint union of the sets~$\M_0^{(c)}(\n)$ of modular
points in each ideal class~$c$.
\end{defn}

Thus a modular point consists of an underlying lattice and a
superlattice (note, not a sublattice) with relative index ideal~$\n$,
such that the quotient is a cyclic $\OK$-module.  If
$(L,L')\in\M_0(\n)$ then $L\subseteq L'\subseteq\n^{-1}L$.  Modular
points certainly exist with any underlying lattice~$L$, therefore,
since $\n^{-1}L/L\cong(\OK/\n)^2$ has submodules isomorphic
to~$\OK/\n$.

If $P=(L,L')\in\M_0(\n)$ then $\a P=(\a L,\a L')\in\M_0(\n)$ also for
every ideal~$\a$, since $\a L'/\a L\cong L'/L$.  Also,
$PU=(LU,L'U)\in\M_0(\n)$ for all~$U\in\G$.

The elementary divisors of~$L'/L$ are $\a_1=\OK$ and~$\a_2=\n$, so
there exist fractional ideals~$\b_1,\b_2$ and $U\in\G$ such that
\[
     L'=(\b_1\oplus\b_2)U; \qquad L=(\b_1\oplus\n\b_2)U.
\]
It is now convenient to change this notation, replacing $\b_2$
by~$\n\b_2$, so that now the modular point $(L,L')$ has the form
\[
     L=(\b_1\oplus\b_2)U; \qquad L'=(\b_1\oplus\n^{-1}\b_2)U.
\]

The basic example of a modular point is the
pair~$(\RR,\OK\oplus\n^{-1})$ with underlying lattice~$\RR$.  More
generally, to each of our standard lattices~$L_{ij}$ we associate a
\emph{standard modular point}~$P_{ij}$:
\[
   P_{ij}=(L_{ij},L_{ij}') =
   (\p_i\q_j\oplus\q_j,\p_i\q_j\oplus\q_j\n^{-1})  = \q_j(\p_i\oplus
   \OK,\p_i\oplus\n^{-1}).
\]
The following proposition states that every modular point is the image
of a standard modular point under some $U\in\G$.

\begin{defn}
An \emph{admissible basis matrix} for the modular
point~$P=(L,L')\in\M_0^{(c_{ij})}(\n)$ is a matrix $U\in\G$ such that
$P=P_{ij}U$; that is, such that
\[
   L=\q_j(\p_i\oplus \OK)U; \qquad\qquad L'=\q_j(\p_i\oplus\n^{-1})U.
\]
The rows of an admissible basis matrix are vectors~$v_1, v_2\in\RC^2$,
which we call an \emph{admissible basis} for the modular point~$P$;
they are characterized by
\[
   L = \q_j(\p_i v_1+\OK v_2);\qquad\qquad
   L'= \q_j(\p_i v_1+\n^{-1}v_2).
\]
\end{defn}

We now show that admissible bases exist, and see to what extent they
are unique.

\begin{prop} \label{prop:ad-basis-exist-0}
Every modular point for~$\Gon$ has an admissible basis.
\end{prop}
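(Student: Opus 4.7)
The plan is to apply the elementary divisor decomposition discussed just before the proposition and then reduce the existence of an admissible basis matrix to a standard transitivity statement for congruence subgroups. Given $P=(L,L')\in\M_0^{(c_{ij})}(\n)$, the decomposition (with $\a_1=\OK$, $\a_2=\n$, followed by the renaming $\b_2\mapsto\n^{-1}\b_2$ noted immediately before the proposition) produces fractional ideals $\b_1,\b_2$ (with $[\b_1\b_2]=c_{ij}=[\p_i\q_j][\q_j]$) and a matrix $U\in\G$ such that $L=(\b_1\oplus\b_2)U$ and $L'=(\b_1\oplus\n^{-1}\b_2)U$. Writing the sought-for admissible basis matrix as $\tilde U=VU$, the task reduces to finding $V\in\GL(2,K)$ that simultaneously satisfies
\[
(\p_i\q_j\oplus\q_j)V=\b_1\oplus\b_2 \qquad\text{and}\qquad (\p_i\q_j\oplus\n^{-1}\q_j)V=\b_1\oplus\n^{-1}\b_2.
\]

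For the first identity, some solution $V_0$ always exists, because $[\b_1\b_2]=[\p_i\q_j][\q_j]$ implies $\p_i\q_j\oplus\q_j$ and $\b_1\oplus\b_2$ are isomorphic as $\OK$-modules (equal Steinitz class), and any such isomorphism extends to an invertible $K$-linear endomorphism of $K^2$. The full solution set is the coset $\Delta(\p_i\q_j,\q_j)\,V_0=\Gamma_{\p_i}V_0$, the stabilizer identification following directly from~\eqref{eqn:Delta-ab-defn} and agreeing with the proposition stated just above ours. Imposing the second identity on a candidate $V=\delta V_0$ becomes, after reducing modulo $\p_i\q_j\oplus\q_j$, the requirement that $\delta$ carry a specific cyclic $\OK/\n$-submodule of $\n^{-1}L_{ij}/L_{ij}\cong(\OK/\n)^2$ (namely the one contributed by the $\n^{-1}$ in the second coordinate on the left) to a specific target submodule (determined by $L'$ and $V_0$); both have annihilator exactly $\n$ and so correspond to points of $\P^1(\OK/\n)$.

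The proposition is therefore equivalent to the transitivity of $\Gamma_{\p_i}$ on $\P^1(\OK/\n)$, via its natural action on $\n^{-1}L_{ij}/L_{ij}$. I would establish this by exhibiting the integral subgroup $\SL(2,\OK)\cap\Gamma_0(\p_i)\subseteq\Gamma_{\p_i}$ and invoking the flexibility (noted earlier in the paper) to take $\p_i$ coprime to $\n$: a standard Chinese Remainder / strong approximation argument then shows that this subgroup surjects onto $\SL(2,\OK/\n)$ under reduction mod $\n$, and $\SL(2,\OK/\n)$ acts transitively on $\P^1(\OK/\n)$ in the usual way (as already recalled in the section on M-symbols).

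The main obstacle I anticipate is not the transitivity itself, which is classical, but the bookkeeping needed to align the different actions: pinning down the $\Gamma_{\p_i}$-action on $\n^{-1}L_{ij}/L_{ij}$, choosing compatible trivializations $\n^{-1}\p_i\q_j/\p_i\q_j\cong\OK/\n$ and $\n^{-1}\q_j/\q_j\cong\OK/\n$, and verifying that the induced reduction-mod-$\n$ action on $(\OK/\n)^2$ coincides with the standard one. Once these identifications are in place, the remainder of the argument is a short calculation, and specialising to $V=\delta V_0$ with the correct $\delta\in\Gamma_{\p_i}$ yields the admissible basis matrix $\tilde U=VU$.
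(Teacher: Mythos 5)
Your proposal is correct in outline, but it takes a genuinely different route from the paper. Where you find a matrix $V_0$ satisfying only the level-$1$ lattice condition $(\p_i\q_j\oplus\q_j)V_0=\b_1\oplus\b_2$ (which exists by the Steinitz-class comparison) and then correct by an element $\delta$ of the stabilizer $\Gamma_{\p_i}=\Delta(\p_i\q_j,\q_j)$ — reducing the remaining condition to transitivity of $\Gamma_{\p_i}$ on the $\P^1(\OK/\n)$ of cyclic annihilator-$\n$ submodules of $\n^{-1}L_{ij}/L_{ij}$, proved via surjectivity of $\SL(2,\OK)\cap\Go(\p_i)\to\SL(2,\OK/\n)$ — the paper instead constructs in one step a matrix $V\in\mat{\b_2^{-1}}{\b_1^{-1}}{\b_1\n}{\b_2}$ with $\det V=1$ satisfying both lattice conditions simultaneously, by an explicit Dedekind-domain choice of entries (generalising \cite[Prop.~1.3.12]{Cohen2}), with the case of free underlying lattice handled even more directly via a $(\b_1,\b_2)$-matrix of level $\n$. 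Your route trades a little generality (you need $\p_i$ coprime to $\n$, which the paper permits but does not require) and leaves some nontrivial bookkeeping as you note: the action of $\Gamma_{\p_i}$ on $\n^{-1}L_{ij}/L_{ij}$, once $(\OK/\n)^2$-coordinates are chosen, is only a conjugate of the standard reduction by a diagonal matrix in $\GL(2,\OK/\n)$, not the standard reduction itself — which still suffices for transitivity on $\P^1(\OK/\n)$, but is a claim you would need to pin down. In exchange, your argument connects the proposition directly to the M-symbol and coset-representative material of Section~2, whereas the paper's explicit construction is closer in spirit to the algorithmic matrix formulas of Section~6 and produces an admissible basis matrix immediately usable in computation.
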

\begin{proof}
First consider the case of a principal modular point $(L,L')$, where
$L$ is free.  Write $L=(\b_1\oplus\b_2)U_0$ and
$L'=(\b_1\oplus\n^{-1}\b_2)U_0$ as above, where $\b_1,\b_2$ are
integral ideals and $U_0\in\G$.  Since $L$ is free, $\b_1\b_2$
is principal; let $V\in\GL(2,K)$ be a $(\b_1,\b_2)$-matrix of
level~$\n$.  Then $(\RR)V=\b_1\oplus\b_2$, so that putting
$U=VU_0\in\G$ we have $(\RR)U=(\b_1\oplus\b_2)U_0=L$;
moreover, since $V$ has level~$\n$, Proposition~2.6 of~\cite{JC+MA}
implies that $(\OK\oplus\n^{-1})V=\b_1\oplus\n^{-1}\b_2$, and hence
$(\OK\oplus\n^{-1})U=(\b_1\oplus\n^{-1}\b_2)U_0=L'$.  Thus, $U$ is an
admissible basis matrix for~$(L,L')$.

Next we show how to construct an admissible basis matrix in the case
where $L\cong L_{i1}=\p_i\oplus \OK$ with~$i>1$.  Again we start from a
representation of the form~$L=(\b_1\oplus\b_2)U_0$ and
$L'=(\b_1\oplus\n^{-1}\b_2)U_0$ with $U_0\in\G$.  We claim the
existence of $V\in\mat{\b_2^{-1}}{\b_1^{-1}}{\b_1\n}{\b_2}$ with~$\det
V=1$.  Then~$V$ satisfies $(\b_1\b_2\oplus \OK)V=\b_1\oplus\b_2$ and
$(\b_1\b_2\oplus\n^{-1})V = \b_1\oplus\n^{-1}\b_2$.  Finally, since
$L\cong L_{i1}$, we have $\b_1\b_2=t\p_i$ for some~$t\in K^*$, so if
we set $U=\mat{t}{0}{0}{1}VU_0$ then $U$ is an admissible basis matrix
for $(L,L')$:
\begin{align*}
  (\p_i\oplus \OK)U &= (\b_1\b_2\oplus \OK)VU_0=(\b_1\oplus\b_2)U_0=L;\\
  (\p_i\oplus\n^{-1})U &=
  (\b_1\b_2\oplus\n^{-1})VU_0=(\b_1\oplus\n^{-1}\b_2)U_0=L'.\\ 
\end{align*}
For the existence of~$V$ we proceed as follows, generalizing
\cite[Prop.~1.3.12]{Cohen2} where the special case $\n=\OK$ is proved.
Choose an integral ideal~$\a$ coprime to~$\b_2$ in the inverse ideal
class to~$\n\b_1$.  Then $\a\n\b_1=\<z>$ with $z\in\n\b_1$.  Next
choose $x$ so that $x\b_2$ is integral and coprime to~$\a\n$.  Then
$x\in\b_2^{-1}$, and since $\OK=x\b_2+\a\n=x\b_2+z\b_1^{-1}$, there
exist $w\in \b_2$ and~$y\in\b_1^{-1}$ such that $xw-yz=1$.  Now
$V=\mat{x}{y}{z}{w}\in\mat{\b_2^{-1}}{\b_1^{-1}}{\n\b_1}{\b_2}$ with
$\det V=1$ as required.

Finally, if~$P=(L,L')$ is a modular point with class~$c_{ij}$
and~$j>1$, we form $\q_j^{-1}P=(\q_j^{-1}L,\q_j^{-1}L')$, which has
class~$c_{i1}$, and simply observe that any admissible basis for the
latter is also an admissible basis for~$P$.
\end{proof}

Let
\[
  \Go^{\p}(\n) = \left\{\mat{a}{b}{c}{d}\mid a,d\in \OK;
  b\in\p^{-1}; c\in\n\p; ad-bc\in \OK^\times \right\}.
\]
It is easy to see that the group of matrices~$\gamma\in\GL(2,K)$ which
stabilize~$P_{ij}$ is~$\Go^{\p_i}(\n)$, from which the following is
immediate.

\begin{prop} \label{prop:ad-basis-unique-0}
Let $P$ be a modular point for $\Gon$ in class~$c_{ij}$, and
let $U$ be an admissible basis matrix for~$P$, so that $P=P_{ij}U$.
Then the set of all admissible basis matrices for~$P$ is the coset
$\Go^{\p_i}(\n)U$.\qed
\end{prop}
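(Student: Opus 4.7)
The plan has two stages: first verify the announced stabilizer calculation, that the set of $\gamma \in \G$ fixing $P_{ij}$ equals $\Go^{\p_i}(\n)$, and then deduce the coset description from this by a routine group-action argument.

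For the stabilizer calculation, I would first recall from the preceding subsection that any $U \in \G$ fixing a lattice has entries in $K$, so the stabilizer of $P_{ij}$ automatically lies in $\GL(2,K)$, where the formulas in~\eqref{eqn:Delta-ab-defn} apply. Since $P_{ij} = (\q_j(\p_i \oplus \OK),\,\q_j(\p_i \oplus \n^{-1}))$, and multiplying a lattice by the invertible fractional ideal $\q_j$ does not affect its stabilizer in $\GL(2,K)$, the problem reduces to intersecting $\mathrm{Stab}_{\GL(2,K)}(\p_i \oplus \OK) = \Delta(\p_i,\OK) = \Gamma_i$ with $\mathrm{Stab}_{\GL(2,K)}(\p_i \oplus \n^{-1}) = \Delta(\p_i,\n^{-1})$.

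Next I would intersect the entrywise conditions coming from~\eqref{eqn:Delta-ab-defn} for a matrix $\smat{a}{b}{c}{d}$: namely $a,d \in \OK$, $b \in \p_i^{-1} \cap \p_i^{-1}\n^{-1}$, $c \in \p_i \cap \p_i\n$, and $ad-bc \in \OK^\times$. Because $\n \subseteq \OK$ gives $\n^{-1} \supseteq \OK$, one has $\p_i^{-1} \subseteq \p_i^{-1}\n^{-1}$, so the condition on $b$ collapses to $b \in \p_i^{-1}$, while the condition on $c$ simplifies to $c \in \p_i\n$. This is precisely the definition of $\Go^{\p_i}(\n)$.

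To convert the stabilizer identity into the stated coset description, I would observe: if $U'$ is another admissible basis matrix for $P$, then $P_{ij}U' = P = P_{ij}U$, so $P_{ij}(U'U^{-1}) = P_{ij}$, giving $U'U^{-1} \in \Go^{\p_i}(\n)$, i.e.\ $U' \in \Go^{\p_i}(\n)\,U$; conversely, for any $\gamma \in \Go^{\p_i}(\n)$, the matrix $\gamma U$ is admissible since $P_{ij}(\gamma U) = (P_{ij}\gamma)U = P_{ij}U = P$. I do not expect any serious obstacle; the only place to be careful is the bookkeeping of ideal conditions when intersecting the two $\Delta$-sets, and this is handled cleanly by the inclusion $\p_i^{-1} \subseteq \p_i^{-1}\n^{-1}$.
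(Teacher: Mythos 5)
Your proposal is correct and follows the same route as the paper: the paper's proof is the one-line remark that the $\GL(2,K)$-stabilizer of $P_{ij}$ is $\Go^{\p_i}(\n)$, ``from which the following is immediate,'' and you simply fill in both halves of this — the stabilizer computation by intersecting $\Delta(\p_i,\OK)$ with $\Delta(\p_i,\n^{-1})$ (after using the fact that fractional-ideal scaling by $\q_j$ leaves the stabilizer unchanged and that any $\G$-stabilizer of a lattice lies in $\GL(2,K)$), and the standard orbit--stabilizer bookkeeping for the coset. The only minor caveat is that~\eqref{eqn:Delta-ab-defn} is stated in the paper for pairs of integral ideals, so one should either scale to replace $\p_i\oplus\n^{-1}$ by $\n\p_i\oplus\OK$ before applying it, or note that the characterization $\Delta(\a,\b)=\{\gamma\in\GL(2,K):(\a\oplus\b)\gamma=\a\oplus\b\}$ carries over to fractional ideals by the same scaling argument.
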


Again, the situation is simpler for square ideal classes, for which
the right stabilizer of the standard modular point is the same as for
principal modular points, namely~$\Gon$ itself.  It can be shown that
the more general groups~$\Go^{\p}(\n)$ defined above are conjugate to
$\Gon$ in $\GL(2,K)$ if and only if the ideal class~$[\p]$ is a
square.  More generally, if $[\p_1]=[\p_2][\a]^2$ then
$\Go^{\p_2}(\n)=M\Go^{\p_1}(\n)M^{-1}$ where~$M\in\GL(2,K)$ satisfies
$(\a\p_2\oplus\a)M=\p_1\oplus \OK$ and simultaneously
$(\a\p_2\oplus\a\n^{-1})M=\p_1\oplus \n^{-1}$.  Our choice of standard
lattices and modular points has been made to minimise the number of
different groups of the form~$\Go^{\p}(\n)$ which need to be
considered.

\begin{cor} \label{cor:g0n-bij}
For each ideal class~$c_{ij}=[\p_i\q_j^2]$,
\[
  \M_0^{(c_{ij})}(\n) = \{P_{ij}U \mid U\in\G\},
\]
and the correspondence $P_{ij}U \leftrightarrow U$ is a bijection
\[
   \M_0^{(c_{ij})}(\n) \longleftrightarrow \Go^{\p_i}(\n)\backslash\G.
\]
\end{cor}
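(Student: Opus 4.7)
The plan is to assemble the corollary directly from the two preceding propositions. The equality $\M_0^{(c_{ij})}(\n) = \{P_{ij}U \mid U\in\G\}$ is essentially a restatement of \Cref{prop:ad-basis-exist-0}: the inclusion $\supseteq$ follows because right multiplication by an element of~$\G$ preserves both the Steinitz class of the underlying lattice and the quotient~$L'/L$, so $P_{ij}U$ lies in~$\M_0^{(c_{ij})}(\n)$; the inclusion $\subseteq$ is exactly the existence of an admissible basis for every modular point in class~$c_{ij}$.

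For the bijection, I would define $\Phi\colon\G\to\M_0^{(c_{ij})}(\n)$ by $\Phi(U)=P_{ij}U$. Surjectivity of $\Phi$ is again~\Cref{prop:ad-basis-exist-0}. To identify the fibres, note that $\Phi(U_1)=\Phi(U_2)$ if and only if $U_1U_2^{-1}$ stabilizes~$P_{ij}$ under the right action of~$\G$. Here one invokes the remark (already used in the proof of~\Cref{prop:ad-basis-exist-0}) that any element of~$\G$ stabilizing an $\OK$-lattice must have entries in~$K$, so the stabilizer of~$P_{ij}$ in~$\G$ coincides with its stabilizer in~$\GL(2,K)$. By the observation preceding \Cref{prop:ad-basis-unique-0}, this stabilizer equals~$\Go^{\p_i}(\n)$. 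Hence $\Phi(U_1)=\Phi(U_2)$ if and only if $U_1$ and~$U_2$ lie in the same left coset of~$\Go^{\p_i}(\n)$, and $\Phi$ descends to a bijection $\Go^{\p_i}(\n)\backslash\G\longleftrightarrow\M_0^{(c_{ij})}(\n)$.

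There is no real obstacle, since all the substantive content has been proved in \Cref{prop:ad-basis-exist-0,prop:ad-basis-unique-0}; the corollary is purely a packaging of existence (surjectivity of~$\Phi$) and uniqueness up to~$\Go^{\p_i}(\n)$ (description of the fibres). The one place where care is needed is to ensure that the stabilizer computation takes place inside~$\G=\GL(2,\RC)$ rather than~$\GL(2,K)$, which is precisely why one appeals to the fact that any $U\in\G$ fixing a full-rank $\OK$-lattice is forced to have entries in~$K$.
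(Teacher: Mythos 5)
Your proof is correct and matches the intended argument: the corollary is a direct assembly of Propositions~\ref{prop:ad-basis-exist-0} (surjectivity of $U\mapsto P_{ij}U$) and~\ref{prop:ad-basis-unique-0} (identification of the fibres with cosets of $\Go^{\p_i}(\n)$), together with the earlier observations that right multiplication by $\G$ preserves the class and the quotient structure of a modular point, and that any $U\in\G$ fixing a full-rank $\OK$-lattice has entries in~$K$. The paper states the corollary without a separate proof precisely because it is this immediate packaging, and you have spelled it out accurately, including the one genuine subtlety (computing the stabilizer inside~$\G$ rather than~$\GL(2,K)$).
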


\subsection{Modular points for \texorpdfstring{$\Gin$}{Gamma1(n)}}

A modular point for~$\Gin$ consists of a modular point for
$\Gon$ with some extra structure.  
\begin{defn}
A \emph{modular point for $\Gin$} is a triple $P=(L,L',\beta)$ where
$(L,L')$ is a $\Gon$-modular point (so $L$, $L'$ are lattices with
$L'\supseteq L$ and $L'/L\cong \OK/\n$), and $\beta\in L'$ generates
$L'/L$ as~$\OK$-module (that is, $L'=L+\OK\beta$).  We call $L$ the
\emph{underlying lattice} of~$P$, $[L]$ its \emph{class}, and $(L,L')$
the underlying $\Gon$-modular point.

We identify $(L,L',\beta_1)$ and~$(L,L',\beta_2)$ when
$\beta_1-\beta_2\in L$; in particular, $(L,L',\beta)=(L,L',u\beta)$
for $u\in\OK$ coprime to~$\n$.  The condition that~$\beta\pmod{L}$
generates~$L'/L$ is equivalent to $r\beta\in L\iff r\in\n$,
for~$r\in\OK$.

The set of modular points for~$\Gin$ is denoted~$\M_1(\n)$, and is the
disjoint union of the subsets~$\M_1^{(c)}(\n)$ of modular points in
each ideal class~$c$.
\end{defn}

If $P=(L,L',\beta)\in\M_1(\n)$, then also $PU=(LU,L'U,\beta
U)\in\M_1(\n)$ for all~$U\in\G$.  Further transformations on
modular points are defined below.

Together with each of the standard lattices~$L_{ij}$, we have already
associated a standard $\Gon$-modular point $P_{ij}$.  We now extend
each of these to a standard $\Gin$-modular point.  As
in~\Cref{sec:basic}, we fix $n_0\in\n^{-1}$ such that
$\n^{-1}=\<1,n_0>$, or equivalently such that $n_0\pmod{\OK}$
generates $\n^{-1}/\OK$ as $\OK$-module, and set $\beta_0=(0,n_0)\in
\OK\oplus\n^{-1}$.  Then the standard modular points are as follows.
For each~$j$, let~$\a_j$ be an ideal coprime to~$\n$ in the inverse
class to~$\q_j$, so that $\a_j\q_j=\<z_j>$ with~$z_j\in \OK$.  Then
multiplication by~$z_j$ induces an $\OK$-module isomorphism
$\n^{-1}/\OK\to\q_j\n^{-1}/\q_j$.  Set $n_j=n_0z_j\in\q_j\n^{-1}$ and
$\beta_j=(0,n_j)$; then $n_j$ generates the cyclic module
$\q_j\n^{-1}/\q_j$.  We define
\[
   \widetilde{P}_{ij} = (L_{ij},L_{ij}',\beta_j) = 
  (\q_j(\p_i\oplus \OK),\q_j(\p_i\oplus\n^{-1}),\beta_j). 
\]
Although this does depend on various choices made, we regard these as
fixed once and for all.  Also, since~$\beta_j$ depends only on~$j$ and
not on~$i$, it only depends on the class modulo squares.  In
particular, when the class group has exponent~$2$ we have
$\beta=\beta_0$ in all cases\footnote{This is one place where the
situation is not simpler for fields of odd class number.}.

Any other generator of $L_{ij}'/L_{ij}$ has the form
$\beta=u\beta_j\pmod{L_{ij}}$ where $u\in \OK$ is coprime to~$\n$.  If we let
$\gamma_u\in\Go^{\p_i}(\n)$ be any matrix with~$(2,2)$-entry~$u$,
then
\[
     (L_{ij},L_{ij}',\beta) = (L_{ij},L_{ij}',\beta_j)\gamma_u =
     \widetilde{P}_{ij}\gamma_u
\]
since $\gamma_u$ stabilises $(L_{ij},L_{ij}')$
by~\Cref{prop:ad-basis-unique-0}. Moreover,
\[
   \widetilde{P}_{ij}\gamma_u = \widetilde{P}_{ij} \iff 
   u\equiv1\pmod{\n} \iff
   \gamma_u\in\Gi^{\p_i}(\n),
\]
where
\[
   \Gi^{\p}(\n) = \left\{ 
        \mat{a}{b}{c}{d}\in\Go^{\p}(\n)\mid d-1\in\n
                       \right\}. 
\]
Hence the set of all extensions of~$P_{ij}$ to a $\Gin$-modular point
is the set of~$P_{ij}\gamma$, as $\gamma$ runs through
$\Gi^{\p_i}(\n)\backslash\Go^{\p_i}(\n)$.  This has
cardinality~$\varphi(\n)$: the index
$[\Go^\p(\n):\Gi^\p(\n)]=\varphi(\n)$ always, since $\Gi^\p(\n)$ is
the kernel of the surjective homomorphism
$\epsilon:\Go^\p(\n)\to(\OK/\n)^\times$ which maps a matrix to
its~$(2,2)$-entry.

More generally, to describe the set of all $\Gin$ modular
points with an arbitrary underlying $\Gon$ modular point we
first extend the notion of admissible basis.

\begin{defn}
An \emph{admissible basis matrix} for the modular point~$\widetilde{P}
= (L,L',\beta)\in\M_1^{(c_{ij})}(\n)$ is a matrix $U\in\G$ such that
$\widetilde{P} = \widetilde{P}_{ij}U$; that is, such that
\[
   L=\q_j(\p_i\oplus \OK)U; \qquad L'=\q_j(\p_i\oplus \n^{-1})U;  \qquad
   \beta=\beta_jU.
\]
The rows of an admissible basis matrix are an \emph{admissible basis}
for the modular point.
\end{defn}

\begin{prop} \label{prop:ad-basis-exist-1}
Every modular point~$\widetilde{P}$ for~$\Gin$ has an admissible
basis matrix~$U$; the set of all admissible basis matrices for~$\tilde
P$ is the coset $\Gi^{\p_i}(\n)U$ where~$c_{ij}$ is the class
of~$\widetilde{P}$; the set of $\Gin$-modular points with the same
underlying $\Gon$-modular point as~$\widetilde{P}$ is $\{\tilde
PU^{-1}\gamma U\mid \gamma\in\Go^{\p_i}(\n)\}$.

Hence
\[
  \M_1^{(c_{ij})}(\n) = \{\widetilde{P}_{ij}U \mid U\in\G\},
\]
and the correspondence $\widetilde{P}_{ij}U \leftrightarrow U$ is a bijection
\[
   \M_1^{(c_{ij})}(\n) \longleftrightarrow \Gi^{\p_i}(\n)\backslash\G.
\]
\end{prop}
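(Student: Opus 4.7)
The plan is to reduce each of the three assertions to what we already know about $\Go(\n)$-modular points from Propositions \ref{prop:ad-basis-exist-0} and \ref{prop:ad-basis-unique-0} together with the discussion immediately preceding the proposition concerning how generators of $L_{ij}'/L_{ij}$ transform under $\Go^{\p_i}(\n)$.

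For existence, I would start with a $\Gin$-modular point $\widetilde{P}=(L,L',\beta)$ in class $c_{ij}$. Apply \Cref{prop:ad-basis-exist-0} to the underlying $\Gon$-modular point $(L,L')$ to obtain some $U_0\in\G$ with $(L,L')=(L_{ij},L_{ij}')U_0$. Then $\beta U_0^{-1}$ lies in $L_{ij}'$ and generates $L_{ij}'/L_{ij}$, so by the discussion preceding the proposition it has the form $u\beta_j\pmod{L_{ij}}$ for some $u\in\OK$ coprime to $\n$. Choose $\gamma_u\in\Go^{\p_i}(\n)$ with $(2,2)$-entry $u$; then $\beta_j\gamma_u\equiv u\beta_j\equiv \beta U_0^{-1}\pmod{L_{ij}}$, and $\gamma_u$ stabilises $(L_{ij},L_{ij}')$, so $U=\gamma_u U_0$ is an admissible basis matrix for $\widetilde{P}$.

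For uniqueness, suppose $U$ and $U'$ are both admissible basis matrices for $\widetilde{P}$. Then $\widetilde{P}_{ij}=\widetilde{P}_{ij}(U'U^{-1})$, and we have already observed that the stabiliser of $\widetilde{P}_{ij}$ in $\G$ is exactly $\Gi^{\p_i}(\n)$: any element stabilising $\widetilde{P}_{ij}$ must stabilise $(L_{ij},L_{ij}')$ and hence lie in $\Go^{\p_i}(\n)$ by \Cref{prop:ad-basis-unique-0}, and must additionally fix $\beta_j\pmod{L_{ij}}$, which by the preceding discussion is the condition $d\equiv 1\pmod{\n}$ on the $(2,2)$-entry. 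Hence $U'U^{-1}\in\Gi^{\p_i}(\n)$, giving the coset $\Gi^{\p_i}(\n)U$.

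For the third assertion, any $\Gin$-modular point sharing the underlying $\Gon$-point $(L,L')$ with $\widetilde{P}$ arises by replacing $\beta$ with $u\beta$ for some $u\in\OK$ coprime to $\n$. Via the admissible basis matrix $U$ for $\widetilde{P}$, such an extension corresponds to $\widetilde{P}_{ij}\gamma_u U$ for a suitable $\gamma_u\in\Go^{\p_i}(\n)$, and as $\gamma$ ranges over $\Go^{\p_i}(\n)$ the modular points $\widetilde{P}_{ij}\gamma U=\widetilde{P}_{ij}U\cdot U^{-1}\gamma U=\widetilde{P}U^{-1}\gamma U$ yield every such extension. Assembling the three parts, the map $U\mapsto \widetilde{P}_{ij}U$ is surjective onto $\M_1^{(c_{ij})}(\n)$ with fibres exactly the right cosets of $\Gi^{\p_i}(\n)$, which gives the claimed bijection.

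The only place that needs genuine care, rather than a direct appeal to the $\Gon$-version, is verifying that the stabiliser of $\widetilde{P}_{ij}$ is precisely $\Gi^{\p_i}(\n)$ and not something larger; this rests on the fact that $\beta_j=(0,n_j)$ with $n_j$ generating $\q_j\n^{-1}/\q_j$, so that $\beta_j\gamma\equiv\beta_j\pmod{L_{ij}}$ forces the $(2,2)$-entry of $\gamma$ to be $\equiv 1\pmod{\n}$ while the $(2,1)$-entry, already in $\n\p_i$ by membership in $\Go^{\p_i}(\n)$, automatically contributes to $L_{ij}$.
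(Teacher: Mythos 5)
Your proof is correct and follows essentially the same route as the paper's: obtain an admissible basis matrix $U_0$ for the underlying $\Go(\n)$-modular point, adjust by a $\gamma_u\in\Go^{\p_i}(\n)$ to match the generator $\beta$, and then invoke the stabiliser computation from the discussion immediately preceding the proposition. You simply spell out the stabiliser and coset assertions that the paper dismisses with ``the other parts are clear.''
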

\begin{proof}
Let $\widetilde{P}$ be a $\Gin$-modular point with underlying
$\Gon$-modular point~$P$.  Let $U$ be an admissible basis
matrix for~$P$.  Then~$\widetilde{P}U^{-1}=(\q_j(\p_i\oplus
\OK),\q_j(\p_i\oplus \n^{-1}),\beta)$ for some~$\beta$.  As seen above,
there exists $\gamma\in\Go^{\p_i}(\n)$ with
$\beta=\beta_j\gamma$.  Then $\widetilde{P}=\widetilde{P}_{ij}\gamma U$, so
$\gamma U$ is an admissible basis matrix for~$\widetilde{P}$.  The other
parts are clear.
\end{proof}

\section{The Hecke algebra} \label{sec:Hecke}

Let $\M=\M_0(\n)$ or~$\M_1(\n)$ be the set of modular points for
either~$\Gon$ or~$\Gin$.  For any field~$F$ we denote by~$F\M$ the
$F$-vector space generated by~$\M$; this is the direct sum of
subspaces~$F\M^{(c)}$, as $c$ runs over the ideal classes.  Elements
of~$F\M$ are formal finite linear combinations of modular points.

In this section we define an algebra~$\T$ of commuting linear
operators called Hecke operators, which act on~$\Q\M$, and prove their
formal properties.  It is possible to define a smaller algebra
acting on~$F\M$, where~$F$ is a field of finite characteristic~$p$,
but we do not do this here.  We also define Atkin-Lehner operators
on~$\Q\M_0(\n)$, and operators $A_{\d}:\M_0(\n)\to\M_0(\m)$ between
modular points of different levels, where $\m\mid\n$ and
$\d\mid\m^{-1}\n$.  Later we will consider the
complexification~$\C\M$, for example when considering eigenvectors for
the Hecke action, but we start with rational scalars, since Hecke
operators respect the rational structure.

\subsection{Formal Hecke operators}

We will only sketch the proofs of the main results here, since they
are almost identical to those for the classical theory; full details
were given in Bygott's thesis~\cite{JBthesis}, though our
notation differs slightly from there.

First we make a general observation.  If $(L,L')$ is a $\Gon$-modular
point and $M\supseteq L$ is a lattice with index~$[M:L]=\a$, we can
attempt to construct a new modular point $(M,M')$ with underlying
lattice~$M$ by setting $M'=M+L'$.  This is valid when $\a$ is coprime
to~$\n$, since then $L'\cap M=L$, so that $M'/M\cong L'/L'\cap
M=L'/L\cong \OK/\n$, but not in general.  In this situation, when we
talk of the modular point~$(M,M')$, we will always mean the one with
$M'=M+L'$, assuming that this is a valid modular point.  Similarly for
$\Gin$-modular points.

Each of the following operators on~$\Q\M_1(\n)$ will be defined by
specifying the image of each modular point in~$\M_1(\n)$ as a
$\Q$-linear combination of modular points, and extending by
$\Q$-linearity:
\begin{itemize}
\item an operator~$T_{\a,\a}$ for each fractional ideal~$\a$ coprime
  to~$\n$;
\item an operator~$T_{\a}$ for each integral ideal~$\a$;
\item an operator~$[\alpha]$ for each~$\alpha\in K^\times$ coprime to~$\n$.
\end{itemize}
In each case, we also obtain an operator on~$\Q\M_0(\n)$ (which is
trivial in the case of the~$[\alpha]$), by ignoring the third component
of~$\M_1(\n)$-modular points.  Hence, in these definitions, we may
restrict our attention to~$\M=\M_1(\n)$.

We will also define, only on~$\Q\M_0(\n)$,
\begin{itemize}
  \item an Atkin-Lehner operator $W_{\q}$ for each $\q\mid\mid\n$
\end{itemize}
as the linear extension of a map $\M_0(\n)\to\M_0(\n)$.

In~\Cref{sec:level-changing}, we will define operators which change
the level.  For all integral ideals~$\m,\n,\d$ with $\m\mid\n$ and
$\d\mid\m^{-1}\n$, we define an operator $A_{\d}:\M_0(\n)\to\M_0(\m)$.

When it is necessary to specify the level, we will
write~$T_{\a,\a}^{\n}$, $T_{\a}^{\n}$, and $W_{\q}^{\n}$ for the
operators at level~$\n$.

\begin{defn}[{The operators $T_{\a}$}]
  \label{def:Ta}
For each integral ideal $\a$, the operator $T_\a$ is defined by
\[
  T_\a(L,L',\beta) = {N(\a)}^{-1}\sum_{\substack{[M:L]=\a\\ (M,M',\beta)\in\M}}(M,M',\beta).
\]
\end{defn}
Here the sum is over all~$(M,M',\beta)$ such that $M$ is a
super-lattice of~$L$ with $[M:L]=\a$ and $M'=M+L'$ is such that
$M'/M\cong\OK/\n$, in which case it is automatic that $\beta$
generates~$M'/M$.  This sum is certainly finite, since $L\subseteq
M\subseteq\a^{-1}L$ and $\a^{-1}L/L$ is finite, so has only finitely
many submodules.

Clearly, $T_{\OK}$ is the identity operator.

\begin{defn}[{The operators $T_{\a,\a}$}]
  \label{def:Taa}
Here, the definition for~$\M_0(\n)$ is simpler.  We define
$T_{\a,\a}:\Q\M_0(\n)\to\Q\M_0(\n)$ for all fractional ideals~$\n$, by
setting
\[
T_{\a,\a}(L,L') = {N(\a)}^{-2}(\a^{-1}L,\a^{-1}L').
\]

To extend this definition to~$\Q\M_1(\n)$, it is necessary to restrict
to fractional ideals~$\a$ which are coprime to~$\n$.  Note that for
each~$(L,L',\beta)\in\M_1$, we may assume that $\beta\in
L'\cap\a^{-1}L'$, on replacing~$\beta$ by~$u\beta$ where $u\in\OK$
satisfies $u\equiv1\pmod\n$ and $u\in\a^{-1}$; such a~$u$ exists since
$\OK\cap\a^{-1}$ and~$\n$ are coprime.

\begin{lem}
Let $(L,L',\beta)\in\M_1(\n)$.  If $\beta\in\a^{-1}L'$, then also
$(\a^{-1}L,\a^{-1}L',\beta)\in\M_1(\n)$.
\end{lem}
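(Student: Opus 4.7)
The plan is to verify the three defining properties of a $\Gin$-modular point for the triple $(\a^{-1}L,\a^{-1}L',\beta)$: that $\a^{-1}L'$ is a lattice containing $\a^{-1}L$, that their quotient is isomorphic to $\OK/\n$, and that $\beta$ is an $\OK$-generator of that quotient. The hypothesis $\beta\in\a^{-1}L'$ is precisely what is needed for the third ingredient to make sense.

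The first two properties are formal. The inclusion $\a^{-1}L\subseteq\a^{-1}L'$ is immediate from $L\subseteq L'$, and the fact recalled in \Cref{sec:lattices}---that scaling by a fractional ideal preserves the quotient $\OK$-module up to isomorphism---gives $\a^{-1}L'/\a^{-1}L\cong L'/L\cong\OK/\n$. Thus $(\a^{-1}L,\a^{-1}L')$ is itself a $\Gon$-modular point of level~$\n$.

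The substantive step is to show that $\beta$ generates $\a^{-1}L'/\a^{-1}L$. Since $L'=L+\OK\beta$ gives $\a^{-1}L'=\a^{-1}L+\a^{-1}\beta$, this reduces to proving $\a^{-1}\beta\subseteq\a^{-1}L+\OK\beta$. My plan is to exploit the coprimality of $\a$ and $\n$ via a ``partition of unity'' $1=a+n$ with $a\in\a$ and $n\in\n$: for any $c\in\a^{-1}$, writing $c\beta=(ac)\beta+c(n\beta)$ puts the first summand in $\OK\beta$ (since $ac\in\OK$) and the second in $\a^{-1}L$ (since $n$ annihilates $L'/L$, so $n\beta\in L$, hence $c(n\beta)\in cL\subseteq\a^{-1}L$). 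Once $\beta$ is known to generate, the annihilator is automatically $\n$ because the quotient is abstractly $\OK/\n$.

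I foresee no real obstacle: the whole argument rests on this single coprimality trick, and the hypothesis $\beta\in\a^{-1}L'$ was built into the preceding paragraph of the paper precisely to make the statement clean. The closest thing to a subtlety is merely remembering why the coprimality assumption on $\a$ cannot be dropped---without it, one could not write $1=a+n$ and the element $c\beta$ need not land in $\a^{-1}L+\OK\beta$ at all.
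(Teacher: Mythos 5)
Your proof is correct. It establishes the same key equality $\a^{-1}L' = \a^{-1}L + \OK\beta$ as the paper, but by a slightly different route. The paper works entirely at the level of ideal identities: it first shows $L' = (\OK\cap\a)L' + L$ from $(\OK\cap\a) + \n = \OK$ and $\n L' \subseteq L$, next that $L' = (\OK+\a)\beta + L$ from $\a\beta \subseteq L'$, and then combines these using the Dedekind-domain identity $(\OK\cap\a)(\OK+\a) = \a$ to conclude $L' = \a\beta + L$ before scaling by $\a^{-1}$. You instead take an element-level shortcut: from $L' = L + \OK\beta$ you reduce directly to $\a^{-1}\beta \subseteq \a^{-1}L + \OK\beta$, and dispatch that with a single partition of unity $1 = a + n$ (valid since coprimality forces $\OK \subseteq \a + \n$), splitting $c\beta = (ac)\beta + c(n\beta)$ for $c \in \a^{-1}$ with $ac \in \OK$ and $n\beta \in L$. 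Both proofs hinge on the same two facts ($\a$ coprime to $\n$, and $\n L' \subseteq L$); yours is the more elementary, avoiding the identity $(\OK\cap\a)(\OK+\a)=\a$ and its intermediate lemma, while the paper's has the advantage of staying uniformly in the language of fractional ideals.
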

\begin{proof}
To check that $(\a^{-1}L,\a^{-1}L',\beta)\in\M_1(\n)$, we must show
that $\a^{-1}L' = \OK\beta + \a^{-1}L$, so that $\beta$ generates
$\a^{-1}L'/\a^{-1}L$.

We first claim that $L'=(\OK\cap\a)L'+L$.  Since $\n L'\subseteq
L\subseteq L'$, and $\a$ and~$\n$ are coprime, we have
\[
L' = (\OK\cap\a)L'+\n L' \subseteq (\OK\cap\a)L'+L \subseteq L'+L=L'.
\]
Next, given that $L'=\OK\beta+L$ and $\a\beta\subseteq L'$, we have
$L'=(\OK+\a)\beta+L$, and hence
\[
L' = (\OK\cap\a)(\OK+\a)\beta + ((\OK\cap\a)+\OK)L = \a\beta+L.
\]
This implies that $\a^{-1}L' = \OK\beta + \a^{-1}L$, as required.
\end{proof}

Assuming the condition~$\beta\in L'\cap\a^{-1}L'$, setting
\[
T_{\a,\a}(L,L',\beta) = {N(\a)}^{-2}(\a^{-1}L,\a^{-1}L',\beta)
\]
gives a well-defined operator~$T_{\a,\a}:\Q\M_1(\n)\to\Q\M_1(\n)$ for
all fractional ideals coprime to~$\n$, {\it i.e.}, for
$\a\in\J_K^{\n}$.  From the definition, it is clear that $T_{\OK,\OK}$
is the identity, and that for~$\a,\b\in\J_K^{\n}$ we have
\[
T_{\a\b,\a\b} =  T_{\a,\a} T_{\b,\b}.
\]
\end{defn}

\begin{defn}[{The operators $[\alpha]$}]
  \label{def:diamond}
  Let $\alpha\in \OK$ be coprime to~$\n$.  The operator~$[\alpha]$ is defined by
  \[
    [\alpha](L,L',\beta) = (L,L',\alpha\beta).
    \]
    Hence $[\alpha]$ permutes the~$\Gin$-modular points
    above each~$\Gon$-modular point, and acts trivially on the latter.
\end{defn}
This definition is valid, since multiplication by~$\alpha$ induces an
automorphism of~$L'/L$.  Since $\alpha\beta\pmod{L}$ only depends
on~$\alpha\pmod{\n}$ we may regard~$[\alpha]$ as defined for
$\alpha\in(\OK/\n)^\times$, and hence extend it to all~$\alpha\in
K^\times$ coprime to~$\n$.

An alternative for the definition is to set $[\alpha](L,L',\beta) =
(L,L',\beta)\gamma_{\alpha}$ where $\gamma_d\in\Gon$ satisfies
$\epsilon(\gamma_{\alpha})=\alpha$ (recall that~$\epsilon$ maps a matrix to
its~$(2,2)$-entry).

\begin{defn}[{Atkin-Lehner operators $W_{\q}$}]
  \label{def:AL}

Let $\n=\q\q'$ be a factorization of the level~$\n$ with $\q,\q'$
coprime, so~$\q\mid\mid\n$, and let~$(L,L')\in\M_0(\n)$.

Let $L_1=L+\q'L'$ and~$L_2=L+\q L'$, so that $(L,L')=(L_1\cap
L_2,L_1+L_2)$.  Here, $L_1$ has index~$\q'$ in~$L'$ and contains~$L$
with index~$\q$, while for~$L_2$ the roles of~$\q,\q'$ are reversed:
$L_1/L\cong L'/L_2\cong\OK/\q$ and~$L_2/L\cong L'/L_1\cong\OK/\q'$.
Setting~$L_1'=\q^{-1}L_2$, we have~$(L_1,L_1')\in\M_0(\n)$.  Now we
define $W_{\q}:\Q\M_0(\n)\to\Q\M_0(\n)$ by setting
\[
W_{\q}(L,L') = N(\q)^{-1}(L_1,L_1') = N(\q)^{-1}(L+\q'L', \q^{-1}L+L').
\]

With respect to bases, if $L=(\a\oplus\b)U$
and~$L'=(\a\oplus\n^{-1}\b)U$, then $L_1=(\a\oplus\q^{-1}\b)U$,
$L_2=(\a\oplus\q'^{-1}\b)U$, so
$L_1'=(\q^{-1}\a\oplus\n^{-1}\b)U$. Ignoring the scaling factor
of~$N(\q)$ and the basis matrix~$U$ we have
\[
W_{\q} \colon
(\a\oplus\b,\a\oplus\n^{-1}\b)
\mapsto
(\a\oplus\q^{-1}\b, \q^{-1}\a\oplus\n^{-1}\b).
\]
As a special case, with $\q=\n$ we obtain the \emph{Fricke operator
$W_{\n}$}, which maps $(L,L')\mapsto N(\n)^{-1}(L',\n^{-1}L)$, and
satisfies~$W_{\n}^2=T_{\n,\n}$.
\end{defn}

\subsection{Relations between the operators}
\label{subsec:Hecke-relations}
\begin{prop} \label{prop:Hecke-relations}
The operators $T_\a$ and $T_{\a,\a}$ at a fixed level~$\n$ satisfy the
following identities:
\begin{enumerate}
\item $T_{\OK}=T_{\OK,\OK}=$ identity.
\item For all~$\a,\b$ coprime to~$\n$,
\[
  T_{\a,\a}T_{\b,\b} = T_{\a\b,\a\b} = T_{\b,\b}T_{\a,\a};
\]
in particular $T_{\a,\a}$ and~$T_{\b,\b}$ commute.
\item For all~$\a,\b$ with $\a$ coprime to~$\n$,
\[
  T_{\a,\a}T_{\b} = T_{\b}T_{\a,\a}.
\]
\item If $\a$ and $\b$ are coprime then
\[
   T_\a T_\b=T_{\a\b}=T_\b T_\a;
\]
in particular $T_\a$ and~$T_\b$ commute.
\item If $\p$ is a prime dividing~$\n$ then for all~$n\ge1$,
\[
  T_{\p^n} = (T_\p)^n.
\]
\item If $\p$ is a prime not dividing~$\n$ then for all~$n\ge1$,
\[
  T_{\p^n}T_\p = T_{\p^{n+1}} + N(\p)T_{\p^{n-1}}T_{\p,\p} .
\]
\end{enumerate}
\end{prop}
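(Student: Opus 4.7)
My plan is to derive each identity directly from the lattice-theoretic
definitions, following the classical arguments for $K=\Q$ with
adaptations for the Dedekind setting. Identities (1) and (2) are
immediate; for (3), multiplication by $\a^{-1}$ gives an
order-preserving bijection between super-lattices of $L$ and of
$\a^{-1}L$ of any given index, preserving the modular-point
condition via $\a^{-1}(M+L')/\a^{-1}M\cong(M+L')/M$, after arranging
$\beta\in L'\cap\a^{-1}L'$ as in the discussion above defining
$T_{\a,\a}$.

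For (4), given $N\supseteq L$ with $[N:L]=\a\b$ and $\a,\b$ coprime,
the primary decomposition $N/L=P_\a\oplus P_\b$ forces any submodule
of order $N(\b)$ in $N/L$ to coincide with $P_\b$, yielding a unique
intermediate lattice $M$ with $[M:L]=\b$. The modular-point condition
propagates automatically: if $N\cap L'=L$ then
$M\cap L'\subseteq N\cap L'=L$ too. For (5), when $\p\mid\n$, the
module $(L'/L)[\p]$ is a nontrivial cyclic submodule of the
$\p$-torsion $\p^{-1}L/L$; if $N/L$ were non-cyclic, then $N$ would
contain the full $\p^{-1}L$, giving
$N\cap L'\supseteq\p^{-1}L\cap L'\supsetneq L$ and violating validity.
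Hence $T_{\p^n}$ sums only over cyclic super-lattices, each admitting
a unique chain of intermediate cyclic super-lattices, and so
$(T_\p)^n=T_{\p^n}$.

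The main work, and the principal obstacle, lies in (6). Expanding
\[
T_{\p^n}T_\p(L,L',\beta) = N(\p^{n+1})^{-1}\sum_{N}c_N\,(N,N+L',\beta),
\]
where $N$ runs over valid super-lattices of $L$ of index $\p^{n+1}$
and $c_N$ counts intermediates $M$ with $[M:L]=\p$, $[N:M]=\p^n$
(validity of $M$ propagates from $N$ automatically). The structure
theorem writes $N/L\cong\OK/\p^a\oplus\OK/\p^b$ with $a\le b$ and
$a+b=n+1$; for $a=0$, $c_N=1$ and the term matches $T_{\p^{n+1}}$,
while for $a\ge 1$ the number of index-$\p$ sublattices of $N$
containing $L$ is $\#\P^1(\OK/\p)=N(\p)+1$, producing an excess of
$N(\p)$ per such $N$.

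To identify this excess with $N(\p)T_{\p^{n-1}}T_{\p,\p}$, I would
show that $N\mapsto L_0:=\p N+L$ is a bijection between the non-cyclic
super-lattices of $L$ of index $\p^{n+1}$ and arbitrary super-lattices
of $L$ of index $\p^{n-1}$, with inverse $L_0\mapsto\p^{-1}L_0$. Since
$\p$ and $\n$ are coprime, the $\p$-primary module $L_0/L$ meets the
$\n$-primary $L'/L$ trivially, so $L_0\cap L'=L$ and every such $L_0$
yields a valid modular point. The subtle point is matching the
super-lattices on the two sides: a direct index calculation gives
$\p^{-1}L_0+L'=\p^{-1}L_0+\p^{-1}L'$, and this is precisely where
coprimality of $\p$ and $\n$ is essential. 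Tracking normalization
factors through $N(\p^{n+1})^{-1}\cdot N(\p)=N(\p^{n-1})^{-1}\cdot
N(\p)^{-2}\cdot N(\p)$ then completes the identity, with (3)
permitting the order $T_{\p^{n-1}}T_{\p,\p}$ as stated.
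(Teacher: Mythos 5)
Your proof is correct and follows essentially the same approach as the paper; parts (1)--(5) match the paper's argument closely, with your cyclicity criterion in (5) (non-cyclic $N/L$ forces $N\supseteq\p^{-1}L$, hence $N\cap L'\supsetneq L$) being the same observation the paper phrases as ``$L\not\subseteq\p N$.'' In (6) you reorganize the same count: where the paper simply compares, for each fixed $(M,M')$ with $[M:L]=\p^{n+1}$, the multiplicity $a\in\{1,N(\p)+1\}$ on the left with $c\in\{0,1\}$ on the right and notes $a=1+N(\p)c$, you globalize this via the bijection $N\mapsto\p N+L$ between non-cyclic superlattices of index $\p^{n+1}$ and arbitrary superlattices of index $\p^{n-1}$, and then verify that level structures and normalizations agree --- a somewhat longer path, but resting on the identical dichotomy (cyclic vs.\ non-cyclic $N/L$) and the same count of intermediates.
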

\begin{proof}
In all cases it suffices to compare the action on $\Gon$-modular
points since it is clear that the condition on~$\beta$ is satisfied.

(1) and (2) are clear.  For (3), both sides map~$(L,L')$
to~$N(\a^2\b)^{-1}$ times the sum of the modular
points~$(M,M')=(\a^{-1}M_1,\a^{-1}M_1')$ such that $[M:\a^{-1}L] = [\a
  M:L] = [M_1:L] = \b$.

For (4), first observe that each tower $L\subseteq M\subseteq N$ with
$[M:L]=\a$ and $[N:M]=\b$ such that $(M,M')=(M,L'+M)\in\M$ and
$(N,N')=(N,M'+N)\in\M$ gives $L\subseteq N$ with $[N:L]=\a\b$ and
$(N,L'+N)\in\M$.  Conversely, every extension $L\subseteq N$ with
$[N:L]=\a\b$ and $(N,N')=(N,L'+N)\in\M$ arises uniquely this way for a
suitable intermediate~$M$, which automatically satisfies
$(M,L'+M)\in\M$.

For (5) we show by induction that $T_{\p^{n+1}}=T_{\p}T_{\p^n}$ for
$n\ge1$.  The only part which is not quite obvious is that to each
extension $L\subseteq N$ with $[N:L]=\p^{n+1}$ with $(N+L')/N\cong
\OK/\n$, there is a unique intermediate~$M$ with $[M:L]=\p$ and
$(M+L')/L\cong \OK/\n$.  The fact that $(N+L')/N$ has exact
annihilator~$\n$ and $\p\mid\n$ implies that $L\not\subseteq\p N$, and
the unique such~$M$ is $M=L+\p N$.

For (6), each side when applied to $(L,L')\in\M$ is a linear
combination of $(M,M')$ where $[M:L]=\p^{n+1}$.  If the multiplicity
of~$(M,M')$ in $T_{\p^n}T_\p(L,L')$ is~$a$ and in
$T_{\p^{n-1}}T_{\p,\p}(L,L')$ is~$c$ then the identity follows from
$a=1+N(\p)c$ since the multiplicity in $T_{\p^{n+1}}(L,L')$ is~$1$ by
definition.  Either $M\supseteq\p^{-1}L$, in which case $c=1$ and
$a=N(\p)+1$; or $M\not\supseteq\p^{-1}L$, in which case $c=0$ and
$a=1$.
\end{proof}

\begin{defn}
  \label{def:T}
  For each level~$\n$, the algebra~$\T$ of formal Hecke operators on
  $\Q\M$ is the $\Q$-algebra generated by $T_{\a}$, for all integral
  ideals~$\a$, and $T_{\a,\a}$, for all fractional ideals~$\a$ coprime
  to~$\n$.

Note that the algebra~$\T$ depends on the level.
\end{defn}

\begin{cor}
The $\Q$-algebra $\T$ is commutative and generated (as a $\Q$-algebra)
by $T_\p$ (for all primes~$\p$) and $T_{\p,\p}$ (for all primes
$\p\nmid\n$).  As a module, $\T$ is generated by the operators of the
form~$T_{\a,\a}T_{\b}$.\qed
\end{cor}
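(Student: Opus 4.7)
The plan is to derive all three assertions directly from the relations in \Cref{prop:Hecke-relations}, proceeding in the order commutativity, algebra generation, module generation.

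First I would establish commutativity. The only pairs of generators whose commutation is not literally a part of \Cref{prop:Hecke-relations} are $T_{\p^n}$ with $T_{\p^m}$ for a fixed prime $\p$. When $\p\mid\n$, relation (5) gives $T_{\p^n}=T_\p^n$, so all powers commute trivially. When $\p\nmid\n$, I would induct on $\min(n,m)$ using (6) to rewrite $T_{\p^n}T_{\p} = T_{\p^{n+1}} + N(\p)T_{\p^{n-1}}T_{\p,\p}$, and use (3) to move $T_{\p,\p}$ across anything. Together with (2), (3), (4), this shows every pair of generators commutes, hence $\T$ is commutative.

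Next, generation as a $\Q$-algebra by $\{T_\p\} \cup \{T_{\p,\p} : \p\nmid\n\}$: factor any integral ideal as $\a = \prod \p^{e_\p}$ and apply (4) to get $T_\a = \prod T_{\p^{e_\p}}$; then (5) handles $\p\mid\n$ and (6) expresses $T_{\p^{e}}$ recursively as a polynomial in $T_\p$ and $T_{\p,\p}$. Similarly, for fractional $\a \in \J_K^{\n}$, unique factorization into primes coprime to~$\n$ together with (2) expresses $T_{\a,\a}$ as a product of $T_{\p,\p}^{\pm 1}$; the inverses are needed because~$\a$ may be fractional, but (2) gives $T_{\a,\a}T_{\a^{-1},\a^{-1}}=T_{\OK,\OK}=1$, so $T_{\a^{-1},\a^{-1}}$ is indeed in the algebra they generate.

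For module generation by the $T_{\a,\a}T_\b$: by commutativity, every element of $\T$ is a $\Q$-linear combination of products of the form $\bigl(\prod_i T_{\p_i,\p_i}^{a_i}\bigr)\bigl(\prod_j T_{\q_j}^{b_j}\bigr)$, where the~$\p_i$ are prime and coprime to~$\n$. The $T_{\p_i,\p_i}$ part collapses into a single $T_{\a,\a}$ by repeated application of (2). For the $T_{\q_j}$ part, I would induct on the total degree $\sum b_j$: use (4) on coprime factors to absorb them into a single $T_{\b}$; for repeated primes $\q_j$, apply (6) to rewrite $T_{\q_j^{b_j - 1}} T_{\q_j}$ (say when $\q_j\nmid\n$) as $T_{\q_j^{b_j}} + N(\q_j) T_{\q_j^{b_j - 2}} T_{\q_j,\q_j}$, reducing to terms of strictly smaller total prime-degree and an extra factor of $T_{\q_j,\q_j}$, which is absorbed into the $T_{\a,\a}$ component. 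When $\q_j\mid\n$, (5) immediately gives $T_{\q_j}^{b_j}=T_{\q_j^{b_j}}$. Iterating terminates and expresses the original monomial as a $\Q$-linear combination of $T_{\a,\a}T_\b$.

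The main obstacle, as I see it, is simply organizing the recursive reduction in the last step cleanly: one must track both the class-$\a$ exponent and the degrees of repeated prime factors in~$\b$, and argue termination of the recursion. This is routine but the bookkeeping needs care; everything algebraic reduces to \Cref{prop:Hecke-relations}, and the commutativity established at the start is what licenses the regrouping throughout.
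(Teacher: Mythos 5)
Your overall approach---deriving the three claims from the relations in \Cref{prop:Hecke-relations}, using (4)--(6) to reduce to prime powers and (2)--(3) to collect the $T_{\a,\a}$ factors---is exactly what the paper intends (the corollary is stated as immediate, with no written argument). The commutativity argument is correct: (5) and (6) exhibit each $T_{\p^n}$ as a polynomial in $T_\p$ and $T_{\p,\p}$ (or in $T_\p$ alone when $\p\mid\n$), and (2), (3), (4) then show all generators commute.

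There is, however, a gap in the generation step. You assert that $T_{\a^{-1},\a^{-1}}$ belongs to the $\Q$-algebra generated by $\{T_\p\}\cup\{T_{\p,\p}\}$ because relation (2) gives $T_{\a,\a}T_{\a^{-1},\a^{-1}}=1$. That is a non sequitur: the inverse of an element need not lie in the subalgebra that element generates (for example $\Q[x]$ does not contain $x^{-1}$). Since, acting on the infinite set $\M$, the operator $T_{\p,\p}$ satisfies no polynomial relation over the subalgebra generated by the remaining generators, $T_{\p,\p}^{-1}$ really does lie outside $\Q\bigl[\{T_\p\}\cup\{T_{\p,\p}\}\bigr]$; one must include $T_{\p,\p}^{-1}$ (equivalently $T_{\p^{-1},\p^{-1}}$) among the generators. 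This is an imprecision that the corollary's own phrasing shares, since \Cref{def:T} takes $T_{\a,\a}$ over \emph{fractional} $\a$, but the relation-(2) argument does not repair it. Separately, the module-generation reduction conflates the power $T_{\q}^{b}$ with the single operator $T_{\q^{b}}$ when invoking relation (6); the clean version is to show by induction on $m$, using (6), that $T_\p^m$ is a $\Z$-linear combination of terms of the form $T_{\p,\p}^{c}T_{\p^{d}}$, and then combine across distinct primes via (4) before collecting the $T_{\a,\a}$ factors with (2) and (3).
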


The multiplicative relations between these operators may be summarised
in terms of a formal Dirichlet series:
\begin{cor}
We have the formal identity between the formal Hecke operators:
\[
   \sum_{\a}T_\a N(\a)^{-s} = \prod_{\p\mid\n}(1-T_\p N(\p)^{-s})^{-1}
   \prod_{\p\nmid\n}(1-T_\p N(\p)^{-s}+T_{\p,\p}N(\p)^{1-2s})^{-1}.
\]
Here the sum on the left is over all integral ideals~$\a$ of~$\OK$,
and the product on the right is over all prime ideals~$\p$.\qed
\end{cor}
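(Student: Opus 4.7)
The plan is to establish the identity one prime at a time, exploiting the multiplicative structure provided by \Cref{prop:Hecke-relations}. First I would note that by part~(4), $T_\a T_\b = T_{\a\b}$ whenever $\a$ and $\b$ are coprime, so the Dirichlet series $\sum_\a T_\a N(\a)^{-s}$ admits a formal Euler product expansion
\[
\sum_{\a} T_\a N(\a)^{-s} = \prod_{\p} \left( \sum_{n\ge 0} T_{\p^n} N(\p)^{-ns} \right),
\]
where the outer product ranges over all prime ideals~$\p$ of~$\OK$. (Unique factorization of ideals lets us rearrange; commutativity of the $T_\a$ ensures the rearrangement is legitimate even at the operator level.) It then suffices to verify that each local factor matches the corresponding factor on the right-hand side.

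For a prime $\p\mid\n$, part~(5) of \Cref{prop:Hecke-relations} gives $T_{\p^n}=T_\p^n$ for all $n\ge 0$, so with $x=N(\p)^{-s}$ the local factor is the geometric series
\[
\sum_{n\ge 0} T_\p^n x^n = (1-T_\p x)^{-1},
\]
as required. For a prime $\p\nmid\n$, I would use the three-term recursion from part~(6), namely $T_{\p^{n+1}} = T_{\p^n}T_\p - N(\p)\, T_{\p^{n-1}}T_{\p,\p}$ valid for $n\ge 1$, together with the initial conditions $T_{\OK}=1$ (the identity) and $T_\p=T_\p$. Setting $F_\p(x) = \sum_{n\ge 0} T_{\p^n} x^n$ and computing
\[
\bigl(1 - T_\p x + T_{\p,\p} N(\p) x^2\bigr) F_\p(x),
\]
the coefficient of $x^0$ is $1$, the coefficient of $x^1$ is $T_\p - T_\p = 0$, and for $n\ge 1$ the coefficient of $x^{n+1}$ is
\[
T_{\p^{n+1}} - T_\p T_{\p^n} + N(\p) T_{\p,\p} T_{\p^{n-1}} = 0
\]
by the recursion (using also that $T_\p$ and $T_{\p,\p}$ commute with all $T_{\p^k}$, by parts~(2) and~(3)). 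Hence $F_\p(x) = (1-T_\p x + T_{\p,\p} N(\p) x^2)^{-1}$, which with $x=N(\p)^{-s}$ is exactly the claimed local factor.

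There is no real obstacle; the argument is the standard manipulation underlying the Hecke Euler product, and every input needed has already been recorded in \Cref{prop:Hecke-relations}. The only care required is to track that all the relevant operators commute, so that the formal series manipulations and the inversions $(1 - T_\p x + \cdots)^{-1}$ make sense as identities in the (commutative) formal power series ring $\T[[x]]$; this commutativity is exactly the content of the corollary preceding the statement.
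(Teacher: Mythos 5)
Your argument is correct and is precisely the standard manipulation the paper has in mind: the corollary is stated with \qed as an immediate reformulation of \Cref{prop:Hecke-relations}, and your proof --- Euler factorization from part~(4), geometric series from part~(5), and a three-term recursion check from part~(6), all justified by the commutativity established just before --- is exactly the intended verification.
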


Properties (1) and~(2) imply that the $T_{\a,\a}$ for~$\a$ coprime
to~$\n$ are invertible, and form a subgroup of the group of invertible
elements of~$\T$, namely the image of~$\J_K^{\n}$ under the group
homomorphism $\a\mapsto T_{\a,\a}$.

\subsubsection{Atkin-Lehner relations}
Firstly, as already noted,
\[
W_\q^2 = T_{\q,\q},
\]
since both map~$(L,L')$ to~$N(\q)^{-2}(\q^{-1}L, \q^{-1}L')$.  More
generally, suppose that for~$i=1,2$ we have exact
divisors~$\q_i\mid\mid\n$; then
\[
W_{\q_1}W_{\q_2} = T_{\q,\q}W_{\q_3},
\]
where $\q=\q_1+\q_2$ (the greatest common divisor of~$\q_1$
and~$\q_2$) and $\q_3=\q_1\q_2\q^{-2}$ .

\begin{prop} \label{prop:AL-Hecke-commute}
For all~$\a$ coprime to~$\n$ and all~$\q||\n$, the operators~$T_{\a}$
and~$W_\q$ on~$\M_0(\n)$ commute.
\end{prop}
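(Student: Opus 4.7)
The plan is to verify $T_\a W_\q = W_\q T_\a$ on each $(L,L')\in\M_0(\n)$ by unpacking both sides and exhibiting an explicit coefficient-preserving bijection between the summands. Writing $\n=\q\q'$ with $\q,\q'$ coprime, $W_\q$ sends $(L,L')$ to $N(\q)^{-1}(L_1,L_1')$ where $L_1=L+\q'L'$ and $L_1'=\q^{-1}L+L'$. On the one hand,
\[
T_\a W_\q(L,L') \;=\; N(\a\q)^{-1}\sum_N\bigl(N,\, N+L_1'\bigr),
\]
with $N$ ranging over super-lattices of $L_1$ with $[N:L_1]=\a$ for which $(N,N+L_1')$ is a valid level-$\n$ modular point. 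On the other hand,
\[
W_\q T_\a(L,L') \;=\; N(\a\q)^{-1}\sum_M\bigl(M+\q'M',\,\q^{-1}M+M'\bigr),
\]
with $M\supseteq L$, $[M:L]=\a$, and $M'=M+L'$. The inclusion $\q'M\subseteq M$ gives $M+\q'M' = M+\q'L'$ immediately.

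The natural bijection to try is $M\leftrightarrow N$ with $N:=M+\q'L'$. I would verify by localization at each prime $\p$ of $\OK$ that (i) this is a bijection between the two indexing sets, (ii) $[N:L_1]=[M:L]=\a$, and (iii) the second coordinates agree, i.e.\ $N+L_1'=\q^{-1}M+M'$. The key structural fact is that, since $\a$ is coprime to $\n=\q\q'$, the primes where $M$ differs from $L$ (those dividing $\a$) are disjoint from the primes where $L_1$ differs from $L$ (those dividing $\q$). Concretely, at $\p\mid\a$ one has $\q'_\p=\OK_\p$ and $L'_\p=L_\p$, forcing $L_\p=L_{1,\p}$ and $N_\p=M_\p$; at $\p\mid\q$ one has $M_\p=L_\p$ and $\q'_\p=\OK_\p$, forcing $N_\p=L_\p+L'_\p=L'_\p=L_{1,\p}$; at every other prime $M$ and $N$ coincide locally with $L$ and $L_1$ respectively. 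This yields a canonical $\OK$-module isomorphism $N/L_1\cong M/L$, and an entirely parallel local argument establishes $N+L_1'=M'+\q^{-1}L=\q^{-1}M+M'$.

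The main obstacle is purely bookkeeping rather than a conceptual difficulty: one must confirm that the bijection sends valid level-$\n$ summands to valid level-$\n$ summands, in particular that $(N+L_1')/N\cong\OK/\n$ exactly when $(M+L')/M\cong\OK/\n$, and check that the two normalizing factors $N(\a)^{-1}$ and $N(\q)^{-1}$ combine identically on both sides. Each of these follows transparently from the local analysis above, which is in turn a direct consequence of the coprimality hypothesis on $\a$ and $\n$.
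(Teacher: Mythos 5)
Your proof is correct and follows essentially the same route as the paper's: both set up the bijection $M \mapsto N := M + \q'L'$ between the summands and invoke the coprimality of $\a$ and $\q=\n\q'^{-1}$ to see that it preserves indices and second coordinates. Your localization framing is a more explicit rendering of the paper's direct coprimality argument, and you additionally spell out that $N+L_1'=\q^{-1}M+M'$, a verification the paper leaves implicit.
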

\begin{proof}
Let~$\a$ be coprime to~$\n$ and~$\q\mid\mid\n$.  Set~$\q'=\n\q^{-1}$.

Let~$(L,L')$ be a modular point for $\Gon$.  Both~$T_{\a}W_{\q}(L,L')$
and~$W_{\q}T_{\a}(L,L')$ are equal to a scaling factor~$N(\a\q)^{-1}$
times a formal sum of modular points. For ease of notation we will
ignore the scaling factors in the rest of the proof, showing that the
two formal sums contain exactly the same modular points.

First applying~$W_{\q}$, the image of~$(L,L')$ is (in the notation
used in the definition)~$(L_1,L_1') = (L+\q'L',\q^{-1}L+L')$.  As
noted above in the definition of~$W_{\q}$, $L_1$ is the (unique)
sublattice of~$L'$ containing~$L$ with relative index~$\q$.

The image of~$(L,L')$ under~$T_{\a}$ is a sum of terms~$(M,M')$,
where~$M$ runs over all super-lattices of~$L$ with relative
index~$\a$, and~$M'=M+L'$.  We apply~$W_{\q}$ to each term.

Setting~$M_1 = M+\q'M' = M +\q'L'$ and~$M_2 = M+\q M' = M +\q L'$, the
image of~$(M,M')$ under~$W_{\q}$ is~$(M_1,\q^{-1}M_2)$. By
construction, this is again a modular point in~$\M_0(\n)$.  It remains
to show that the super-lattices of~$L_1$ of relative index~$\a$ are
precisely those of the form~$M_1 = M+\q'L'$ for some~$M$ with~$[M:L]=\a$.

Since~$\a$ and~$\q$ are coprime and~$[L_1:L]=\q$, every lattice
containing~$L_1$ with relative index~$\a$ is a super-lattice of~$L$ of
relative index~$\a\q$ and has the form~$L_1+M = (L+\q'L')+M = M+\q'L'$
where~$[M:L]=\a$, completing the proof.
\end{proof}

\subsection{Level-changing operators}
\label{sec:level-changing}

Here we only consider the spaces~$\M_0(\n)$.

\begin{defn}[Level-changing operators $A_{\d}$]
  \label{def:A_d}
  Let $\m,\n,\d$ be integral ideals with $\m\mid\n$ and
  $\d\mid\m^{-1}\n$.  The operator $A_{\d}:\M_0(\n)\to\M_0(\m)$ is
  defined by
  \[
  A_{\d}(L,L') = N(\d)(L\cap\d L', (L\cap\d L')+\m^{-1}\n L').
  \]
  This definition needs some explanation.   To see that the right-hand
  side does lie in~$\M_0(\m)$:
  \[
  \frac{(L\cap\d L')+\m^{-1}\n L'}{L\cap\d L'} \cong \frac{\m^{-1}\n
    L'}{L\cap\m^{-1}\n L'} \cong \frac{L'}{\m\n^{-1}L\cap L'} \cong \OK/\m.
  \]
  Alternatively, if $(L,L') = (\a\oplus\b,\a\oplus\n^{-1}\b)$ then
  \[
  A_{\d}(L,L') = N(\d)(\d\a\oplus\b, \d\a\oplus\m^{-1}\b) \in \M_0(\m);
  \]
  hence, for a standard modular point we have
  \[
  A_{\d}(\q(\p\oplus\OK,\p\oplus\n^{-1})) = N(\d)(\q(\d\p\oplus\OK), \q(\d\p\oplus\m^{-1})).
  \]
  These two formulas show that~$A_{\d}$ has class~$[\d]$, in the sense
  that
  \[
  A_{\d}(\M_0^c(\n)) \subseteq \M_0^{c[\d]}(\m).
  \]
  In terms of basis matrices, $A_{\d}$ maps
  \[
  \q(\p\oplus\OK, \p\oplus\n^{-1})U \mapsto N(\d)(\q(\d\p\oplus\OK), \q(\d\p\oplus\n^{-1}))U.
  \]
  To confirm that this is well-defined, recall that the matrix~$U$ on
  the left-hand side is well-defined up to being multiplied on the
  left by an element of~$\Go^{\p}(\n)$, while the right-hand side
  is well-defined under left multiplication by an element
  of~$\Go^{\d\p}(\m)$, and we
  have~$\Go^{\p}(\n)\subseteq\Go^{\d\p}(\m)$.
\end{defn}
\begin{rmk}
If the class number is~$1$, such as in the classical case, writing the
principal ideals~$\n=(N)$, $\m=(M)$ and~$\d=(d)$, where~$M\mid N$
and~$d\mid N/M$, we recover the classical definition of the
``degeneracy operator'' $A_d$ whose definition depends on the
inclusion
\[
 \mat{d}{0}{0}{1}\Go(N) \mat{d}{0}{0}{1}^{-1} \subseteq  \Go(M).
\]
\end{rmk}

We now consider how the level-changing operators $A_{\d}:\M_0(\n)\to
M_0(\m)$ interact with the Hecke operators~$T_{\a}$ and $T_{\a,\a}$
(for~$\a$ coprime to~$\n$) and $W_{\q}$, at levels~$\n$ and~$\m$.
\begin{prop}
  \label{prop:Ad-commutes-Tp}
  Let $\m\mid\n$, $\d\mid\m^{-1}\n$ and let~$\p$ be a prime not
  dividing~$\n$.  Then $A_{\d}T_{\p}^{\n} = T_{\p}^{\m}A_{\d}$, as maps from $\Q
  M_0(\n)$ to~$\Q M_0(\m)$.
\end{prop}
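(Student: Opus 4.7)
The plan is to apply both sides to an arbitrary modular point $(L,L') \in \M_0(\n)$ and match the resulting formal sums term-by-term, exploiting the fact that $\p$ is coprime to both $\n$ and to $\d$ (the latter since $\d \mid \m^{-1}\n$).

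First I would expand each side using the definitions. Setting $L_0 = L \cap \d L'$, we get
\begin{align*}
A_{\d}T_{\p}^{\n}(L,L') &= \frac{N(\d)}{N(\p)} \sum_{[M:L]=\p} \bigl(M \cap \d(M+L'),\; (M \cap \d(M+L')) + \m^{-1}\n(M+L')\bigr), \\
T_{\p}^{\m}A_{\d}(L,L') &= \frac{N(\d)}{N(\p)} \sum_{[N:L_0]=\p} \bigl(N,\; N + \m^{-1}\n L'\bigr).
\end{align*}
The scalars agree, so it remains to match the two sums of modular points.

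Next I would pass to localizations at $\p$. Since $\p$ does not divide $\n$, the ideals $\d$ and $\m^{-1}\n$ are units at $\p$, so $(L_0)_\p = L_\p = L'_\p$. For any super-lattice $M$ of $L$ with $[M:L] = \p$ we have $M_\q = L_\q$ at every prime $\q \neq \p$. This produces a canonical bijection between the index sets of the two sums: $M \leftrightarrow N$, where $N$ is the unique lattice with $N_\p = M_\p \subseteq \p^{-1}L_\p = \p^{-1}(L_0)_\p$ and $N_\q = (L_0)_\q$ for $\q \neq \p$.

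Then I would verify that $N = M \cap \d(M + L')$ under this bijection. At $\q \neq \p$, $(M+L')_\q = L'_\q$, so $(M \cap \d(M+L'))_\q = (L \cap \d L')_\q = (L_0)_\q = N_\q$; at $\p$, $\d_\p(M+L')_\p = (M+L')_\p \supseteq M_\p$, giving $(M \cap \d(M+L'))_\p = M_\p = N_\p$. For the second component, the inclusion $L' \subseteq M + L'$ gives $N + \m^{-1}\n L' \subseteq N + \m^{-1}\n(M+L')$; conversely, $(\m^{-1}\n(M+L'))_\q = (\m^{-1}\n L')_\q$ at $\q \neq \p$, while at $\p$ the unit $\m^{-1}\n$ yields $(\m^{-1}\n(M+L'))_\p = M_\p = N_\p \subseteq N$, so the reverse inclusion also holds.

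The main obstacle is mostly bookkeeping: arranging the bijection between the two Hecke index sets cleanly so that the two components of each modular point — not just the underlying lattices — genuinely coincide. The localization viewpoint makes this essentially automatic, since every modification performed by $T_{\p}$ is supported at $\p$, while every modification performed by $A_{\d}$ is supported away from $\p$, so the two operators act on disjoint local pieces and therefore commute.
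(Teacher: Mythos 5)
Your proof is correct, and it reaches the same term-by-term matching of the two sums as the paper but by a genuinely different mechanism. The paper's argument works entirely at the level of index ideals: it shows directly that $\widetilde{L} := L\cap\d L' \subseteq \widetilde{M} := M\cap\d M'$, then computes $[\widetilde{M}:\widetilde{L}]=\p$ by comparing the two chains $M \supset L \supseteq \widetilde{L}$ (with indices $\p$, $\d$) and $M \supseteq \widetilde{M} \supseteq \widetilde{L}$ (with first index $\d$), and uses coprimality of $\d$ and $\p$ to recover $M = L + \widetilde{M}$ for injectivity; the second components are handled by a parallel index comparison. You instead localize at every prime and observe that $T_\p$ only perturbs the lattice at $\p$ while $A_\d$ only perturbs it away from $\p$ (since $\p\nmid\n\supseteq\d$), so the two operations commute prime-by-prime. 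The localization route makes the bijection and both component identities ``automatic,'' at the cost of invoking local-global gluing for $\OK$-lattices; the paper's index argument stays closer to the module-theoretic ground and avoids any appeal to localization (the paper does sketch a second proof using explicit ideal-decomposition bases, which is in a similar spirit to yours but phrased via $(\a\oplus\b)$-coordinates rather than local components). Both are fine; your version has the advantage of making the underlying conceptual reason for commutativity -- disjoint support -- immediately visible. One very small notational slip: in the last display you write ``$\subseteq N$'' where you mean the localization $(N + \m^{-1}\n L')_\p$, but the intent is clear and the argument goes through.
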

\begin{proof}
  Let $(L,L')\in\M_0(\n)$ and let $(\tilde{L},\tilde{L'}) =
  A_{\d}((L,L')) = (L\cap\d L', (L\cap\d L') + \m^{-1}\n L')
  \in\M_0(\m)$.  It suffices to show that for all $M\supset L$ with
  $M/L\cong\OK/\p$ and~$M'=L+M$, so that $(M,M')$ is one of the terms
  in the sum defining~$T_{\p}^{\n}(L,L')$, the image
  $(\tilde{M},\tilde{M'})=A_{\d}((M,M'))$ is one of terms appearing in
  the sum defining~$T_{\p}^{\m}(\tilde{L},\tilde{L'})$.

  First consider the underlying lattices: we require that
  $\tilde{M}\supset\tilde{L}$, with index~$[\tilde{M}:\tilde{L}]=\p$.
  On the one hand we have $M\supset L\supseteq\tilde{L}$ with
  indices~$[M:L]=\p$ and~$[L:\tilde{L}]=\d$, and on the other
  hand~$M\supseteq\tilde{M}\supseteq\tilde{L}$ with the first
  index~$[M:\tilde{M}]=\d$, so the second index must be~$\p$.  (Here,
  the inclusion~$\tilde{L}\subseteq\tilde{M}$ follows from
  $\tilde{L}=L\cap\d L'\subseteq M\cap\d(L'+M) = M\cap\d M' =
  \tilde{M}$.)  Coprimality of $\d$ and~$\p$ also now gives
  $M=L+\tilde{M}$, so the map $M\mapsto\tilde{M}$ is injective.

  Secondly, $\tilde{L'} = \tilde{L}+\m^{-1}\n L' \subseteq
  \tilde{M}+\m^{-1}\n M' = \tilde{M'}$, and a similar comparison of
  indices gives $[\tilde{M'}:\tilde{L'}] = \p$; since $\d$ and~$\p$
  are coprime, we have $\tilde{M}+\tilde{L'}=\tilde{M'}$, so
  $(\tilde{M},\tilde{M'})$ is indeed one of the terms in
  $T_{\p}^{\m}(\tilde{L},\tilde{L'})$.
\end{proof}

An alternative proof is possible here, using suitable bases, though it
is necessary to use a different basis for each superlattice~$M$
containing~$L$ with index~$\p$.  Fixing such an~$M$, after a change of
basis we may assume (using the coprimality of~$\p$ and~$\n$) that
$(L,L') = (\a\oplus\b, \a\oplus\n^{-1}\b)$ and also $(M,M') =
(\a\oplus\p^{-1}\b, \a\oplus\p^{-1}\n^{-1}\b)$.  Now, omitting the
scaling factor~$N(\d)$, $A_{\d}$ maps~$(L,L')$ to $(\d\a\oplus\b,
\d\a\oplus\m^{-1}\b)$ and~$(M,M')$ to $(\d\a\oplus\p^{-1}\b,
\d\a\oplus\p^{-1}\m^{-1}\b)$, implying the result.

We use this alternative approach in proving the next two propositions,
which are simpler, as both sides of the equations to be checked
consist of a single element of~$\M_0(\m)$ (multiplied by a certain
scaling factor), instead of a linear combination of several elements.

\begin{prop}
  \label{prop:Ad-commutes-Taa}
  Let $\m\mid\n$, $\d\mid\m^{-1}\n$ and let~$\q$ be a fractional
  ideal.  Then $A_{\d}T_{\q,\q}^{\n} = T_{\q,\q}^{\m}A_{\d}$.
\end{prop}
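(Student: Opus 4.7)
The plan is to verify the identity directly from the definitions, in the basis-free spirit of the alternative approach indicated after~\Cref{prop:Ad-commutes-Tp}. The key simplification compared with that proposition is that now each side reduces to a single modular point (times a scalar) rather than a formal sum of several, so no combinatorial matching of terms is required: only a lattice-theoretic identity has to be checked.

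The algebraic input I will use is that for any fractional ideal~$\q$ of~$\OK$ and any $\OK$-submodules $A,B$ of~$\RC^2$,
\begin{equation*}
\q(A+B)=\q A+\q B\qquad\text{and}\qquad \q(A\cap B)=\q A\cap \q B.
\end{equation*}
The sum identity is immediate. For the intersection, if $x\in\q A\cap\q B$ then for every $r\in\q^{-1}$ we have $rx\in\q^{-1}(\q A)=A$ and similarly $rx\in B$, so $\q^{-1}x\subseteq A\cap B$; then $x\in\OK x=\q(\q^{-1}x)\subseteq\q(A\cap B)$, using $\q\q^{-1}=\OK$.

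Given this, the proof reduces to a one-line calculation. Expanding the definitions in one order,
\begin{align*}
A_{\d}T_{\q,\q}^{\n}(L,L')
&= N(\q)^{-2}A_{\d}(\q^{-1}L,\q^{-1}L')\\
&= N(\q)^{-2}N(\d)\bigl(\q^{-1}L\cap\d\q^{-1}L',\;(\q^{-1}L\cap\d\q^{-1}L')+\m^{-1}\n\q^{-1}L'\bigr),
\end{align*}
and in the other,
\begin{align*}
T_{\q,\q}^{\m}A_{\d}(L,L')
&= N(\d)T_{\q,\q}^{\m}\bigl(L\cap\d L',\;(L\cap\d L')+\m^{-1}\n L'\bigr)\\
&= N(\d)N(\q)^{-2}\bigl(\q^{-1}(L\cap\d L'),\;\q^{-1}(L\cap\d L')+\q^{-1}\m^{-1}\n L'\bigr).
\end{align*}
The displayed identities, applied with $(A,B)=(L,\d L')$ for the underlying-lattice component and with $(A,B)=(L\cap\d L',\,\m^{-1}\n L')$ for the second component, identify the two expressions. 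The content of the proposition is simply that scaling by~$\q^{-1}$ commutes with the operations of intersection with~$\d L'$ and addition of~$\m^{-1}\n L'$ that define $A_{\d}$, so there is no significant obstacle.
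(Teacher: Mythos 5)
Your proof is correct, but it takes a genuinely different route from the paper's. The paper's proof works in coordinates: it first invokes the existence of an admissible basis so that $(L,L')=(\a\oplus\b,\a\oplus\n^{-1}\b)$, and then verifies the identity by a short computation with fractional ideals, where the commutativity is transparent because the ideals sit in a direct sum. You instead argue entirely basis-free from the lattice-theoretic definitions of $T_{\q,\q}$ and $A_{\d}$, reducing the claim to the distributivity identities
\[
\q(A+B)=\q A+\q B,\qquad \q(A\cap B)=\q A\cap\q B
\]
for $\OK$-submodules $A,B$ and an invertible fractional ideal $\q$. (One small point: the remark after \Cref{prop:Ad-commutes-Tp} that you cite as the model for your argument is in fact also a basis-based computation, not a basis-free one; your approach is more intrinsic than what the paper had in mind there.) Your proof of the intersection identity is correct and uses exactly the needed ingredient, namely invertibility of $\q$ in a Dedekind domain; the sum identity is trivial. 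The trade-off is clear: the paper's coordinate proof avoids having to justify that multiplication by $\q^{-1}$ commutes with intersection, but it relies on \Cref{prop:ad-basis-exist-0}; yours needs the intersection lemma but uses no choices at all, which makes it slightly more self-contained and conceptually cleaner at this stage of the development. Both are valid.
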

\begin{proof}
  It suffices, after a choice of basis, to consider the image under
  both composite operators of~$(L,L')=(\a\oplus\b,\a\oplus\n^{-1}\b)
  \in \M_0(\n)$.  On both sides, the scaling factor
  is~$N(\d)N(\q)^{-2}$.  Omitting this, both composite operators
  map~$(L,L')$ to the same element of~$\M_0(\m)$:
    \begin{align*}
    (\a\oplus\b,\a\oplus\n^{-1}\b)
    &\xrightarrow{T_{\q,\q}^{\n}}
    (\q^{-1}\a\oplus\q^{-1}\b, \q^{-1}\a\oplus\q^{-1}\n^{-1}\b)\\
    &\xrightarrow{A_{\d}}
    (\q^{-1}\d\a\oplus\q^{-1}\b, \q^{-1}\d\a\oplus\q^{-1}\n^{-1}\b)
    \end{align*}
    and
    \begin{align*}
    (\a\oplus\b,\a\oplus\n^{-1}\b)
    &\xrightarrow{A_{\d}}
    (\d\a\oplus\b, \d\a\oplus\n^{-1}\b)\\
    &\xrightarrow{T_{\q,\q}^{\m}}
    (\q^{-1}\d\a\oplus\q^{-1}\b, \q^{-1}\d\a\oplus\q^{-1}\n^{-1}\b).
    \end{align*}
\end{proof}

Using parts~(4) and~(6) of~\Cref{prop:Hecke-relations}, we can now
extend~\Cref{prop:Ad-commutes-Tp} to~$T_{\a}$ for all integral~$\a$
coprime to~$\n$:
\begin{cor}
\label{cor:Ad-commutes-Ta}
Let $\m\mid\n$, $\d\mid\m^{-1}\n$ and let~$\a$ be an integral ideal
coprime to~$\n$.  Then $A_{\d}T_{\a}^{\n} = T_{\a}^{\m}A_{\d}$.
\end{cor}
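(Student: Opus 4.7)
The plan is to reduce the general case to prime powers by multiplicativity, and then handle prime powers by induction using the recursion already established in \Cref{prop:Hecke-relations}. Since $\a$ is coprime to $\n$ and $\m\mid\n$, every prime $\p\mid\a$ is coprime to both $\n$ and $\m$, so all of the operators $T_{\p}^{\n}, T_{\p}^{\m}, T_{\p^n}^{\n}, T_{\p^n}^{\m}, T_{\p,\p}^{\n}, T_{\p,\p}^{\m}$ are well-defined and satisfy the recursion of \Cref{prop:Hecke-relations}(6) at both levels.

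First I would write $\a=\prod_{i}\p_i^{e_i}$ as a product of prime powers corresponding to distinct primes $\p_i$, all coprime to $\n$. By \Cref{prop:Hecke-relations}(4), $T_{\a}^{\n}=\prod_i T_{\p_i^{e_i}}^{\n}$ and similarly at level $\m$, so it suffices to show that $A_{\d}$ commutes with $T_{\p^n}^{\n}$ for each prime $\p\nmid\n$ and each $n\ge0$.

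The heart of the argument is then an induction on $n$. The case $n=0$ is trivial (both are the identity), and $n=1$ is precisely \Cref{prop:Ad-commutes-Tp}. For the inductive step, I would use the recursion
\[
T_{\p^{n+1}}^{\n} = T_{\p^n}^{\n}T_{\p}^{\n} - N(\p) T_{\p^{n-1}}^{\n}T_{\p,\p}^{\n}
\]
from \Cref{prop:Hecke-relations}(6), which also holds with $\n$ replaced by $\m$ since $\p\nmid\m$. Composing $A_{\d}$ with this identity at level $\n$, then applying the inductive hypothesis together with \Cref{prop:Ad-commutes-Tp} and \Cref{prop:Ad-commutes-Taa} to move $A_{\d}$ past $T_{\p^n}^{\n}$, $T_{\p}^{\n}$, $T_{\p^{n-1}}^{\n}$, and $T_{\p,\p}^{\n}$ respectively, yields $T_{\p^n}^{\m}T_{\p}^{\m}A_{\d} - N(\p)T_{\p^{n-1}}^{\m}T_{\p,\p}^{\m}A_{\d}$, which equals $T_{\p^{n+1}}^{\m}A_{\d}$ by the recursion at level $\m$.

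There is no real obstacle here; the only thing to verify carefully is that the primes appearing in the factorisation of $\a$ are coprime to both $\n$ and $\m$ (automatic, since $\m\mid\n$), so that the recursion and the earlier commutation propositions apply uniformly at both levels. The induction then runs by a straightforward algebraic manipulation.
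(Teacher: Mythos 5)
Your proposal is correct and fills in exactly the argument the paper itself sketches: the paper states that the corollary follows by "using parts (4) and (6) of \Cref{prop:Hecke-relations}" to extend \Cref{prop:Ad-commutes-Tp}, which is precisely your reduction to prime powers via multiplicativity and then induction via the Hecke recursion, invoking \Cref{prop:Ad-commutes-Taa} for the $T_{\p,\p}$ term. No discrepancy.
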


Next we turn to the Atkin-Lehner operators.  Here, it is convenient,
and clearly sufficient, to restrict to the case where $\m^{-1}\n$ is a
prime power.  The following result generalises Lemma~2.7.1
of~\cite{JCbook2}.

\begin{prop}
  \label{prop:Ad-commutes-Wq}
  Let~$\n=\p^{\alpha+\beta}\m$, where~$\p$ is prime, so that, with
  $\d=\p^{\alpha}$, we have $A_{\d}:\M_0(\n)\to\M_0(\m)$.
  Set~$\d'=\p^{\beta}$, so~$\n=\d\d'\m$, and let~$q\mid\mid\m$.
  \begin{enumerate}
  \item If~$\p\nmid\q$ then $\q\mid\mid\n$ also, and
    \[
    A_{\d}W_{\q}^{\n} = W_{\q}^{\m}A_{\d}.
    \]
  \item If~$\p\mid\q$ then $\q' = \p^{\alpha+\beta}\q = \d\d'\q
    \mid\mid \n$, and
    \[
    A_{\d}W_{\q'}^{\n} = W_{\q}^{\m}T_{\d',\d'}^{\m}A_{\d'}.
    \]
    Note that the operator on the left is~$A_{\d}$, while that on the
    right is the `complementary' operator~$A_{\d'}$.
  \end{enumerate}
\end{prop}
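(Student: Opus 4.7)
The plan is to reduce to a standard modular point and compute both sides in explicit bases. By \Cref{prop:ad-basis-exist-0}, every $P\in\M_0(\n)$ can be written as $(\a\oplus\b,\a\oplus\n^{-1}\b)U$ for some fractional ideals $\a,\b$ and some $U\in\G$, and since $A_\d$, $W_\q$, and $T_{\q,\q}$ are defined intrinsically on lattice pairs they commute with the right $\G$-action. Hence it suffices to verify each identity on the representative $(L_0,L_0'):=(\a\oplus\b,\a\oplus\n^{-1}\b)$.

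For part~(1), one first checks $\q\mid\mid\n$: since $\gcd(\q,\m/\q)=\OK$ and $\gcd(\q,\p)=\OK$, also $\gcd(\q,\n/\q)=\gcd(\q,\p^{\alpha+\beta}\m/\q)=\OK$. The basis formula in \Cref{def:AL} gives
\[
W_\q^\n(L_0,L_0')=N(\q)^{-1}(\a\oplus\q^{-1}\b,\ \q^{-1}\a\oplus\n^{-1}\b);
\]
this pair is no longer in the standard diagonal form, so I would apply $A_\d$ directly via \Cref{def:A_d}, computing $L\cap\d L'$ and $(L\cap\d L')+\m^{-1}\n L'$ prime-by-prime and using $\gcd(\p,\q)=\OK$ to obtain the pair $(\d\a\oplus\q^{-1}\b,\ \d\q^{-1}\a\oplus\m^{-1}\b)$. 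On the other side, $A_\d(L_0,L_0')=N(\d)(\d\a\oplus\b,\d\a\oplus\m^{-1}\b)$ is in standard form, so $W_\q^\m$ applies by the basis formula and yields exactly the same pair with the same scalar $N(\q)^{-1}N(\d)$.

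For part~(2), first verify $\q'=\p^{\alpha+\beta}\q\mid\mid\n$, which follows from $v_\p(\m)=v_\p(\q)$ (a consequence of $\p\mid\q\mid\mid\m$). The right-hand side is a clean three-step basis calculation: $A_{\d'}$ gives $N(\d')(\d'\a\oplus\b,\ \d'\a\oplus\m^{-1}\b)$; $T_{\d',\d'}^\m$ scales lattices by $\d'^{-1}$ and multiplies by $N(\d')^{-2}$; then $W_\q^\m$ produces $(\a\oplus\q^{-1}\d'^{-1}\b,\ \q^{-1}\a\oplus\d'^{-1}\m^{-1}\b)$ with total scalar $N(\q)^{-1}N(\d')^{-1}$. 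The left-hand side applies $A_\d$ to the non-diagonal pair $W_{\q'}^\n(L_0,L_0')=N(\q')^{-1}(\a\oplus\q'^{-1}\b,\ \q'^{-1}\a\oplus\n^{-1}\b)$; using $\p^\alpha\q'^{-1}=\p^{-\beta}\q^{-1}$ and prime-by-prime valuation analysis one finds $L\cap\d L'=\a\oplus\d'^{-1}\q^{-1}\b$ and $(L\cap\d L')+\m^{-1}\n L'=\q^{-1}\a\oplus\d'^{-1}\m^{-1}\b$, matching the right-hand side. The scalars also match: $N(\q')^{-1}N(\d)=N(\p)^{-\beta}N(\q)^{-1}=N(\q)^{-1}N(\d')^{-1}$.

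The main obstacle is the valuation analysis in part~(2): because the pair $W_{\q'}^\n(L_0,L_0')$ is not in the diagonal form $(\tilde\a\oplus\tilde\b,\ \tilde\a\oplus\n^{-1}\tilde\b)$ for which $A_\d$ has a convenient shortcut formula, one must use the general intersection-and-sum definition and compute carefully at each prime---$\p$ itself, primes dividing $\q$ other than $\p$, primes dividing $\m/\q$, and all others---to obtain the claimed intersection and sum. Once this is done, the agreement of the underlying lattices, super-lattices, and scalars in both cases is routine.
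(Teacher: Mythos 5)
Your proposal is correct and follows essentially the same route as the paper: reduce to a diagonal modular point $(\a\oplus\b,\a\oplus\n^{-1}\b)$ via an admissible basis, then track what the ideals in each coordinate become under each operator, comparing the resulting pairs and the scalar factors. The prime-by-prime valuation analysis you single out as the ``main obstacle'' --- computing $L\cap\d L'$ and $(L\cap\d L')+\m^{-1}\n L'$ for the non-diagonal pair produced by $W_{\q}^{\n}$ or $W_{\q'}^{\n}$ --- is precisely the ``straightforward computation'' the paper invokes but does not spell out, so you have merely made explicit what the paper's proof leaves implicit.
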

\begin{proof}
  As before, it suffices to consider the action of both sides
  on~$(\a\oplus\b,\a\oplus\n^{-1}\b)$.
  \begin{enumerate}
  \item A straightforward computation shows that, omitting the common
    scaling factor~$N(\d)N(\q)^{-1}$,
    \begin{align*}
    (\a\oplus\b,\a\oplus\n^{-1}\b)
    &\xrightarrow{W_{\q}^{\n}}
    (\a\oplus\q^{-1}\b, \q^{-1}\a\oplus\n^{-1}\b)\\
    &\xrightarrow{A_{\d}}
    (\d\a\oplus\q^{-1}\b, \q^{-1}\d\a\oplus\m^{-1}\b)
    \end{align*}
    and also
    \begin{align*}
    (\a\oplus\b,\a\oplus\n^{-1}\b)
    &\xrightarrow{A_{\d}}
    (\d\a\oplus\b, \d\a\oplus\m^{-1}\b)\\
    &\xrightarrow{W_{\q}^{\m}}
    (\d\a\oplus\q^{-1}\b, \q^{-1}\d\a\oplus\m^{-1}\b).
    \end{align*}
    \item Since~$\q'=\d\d'\q$, both sides have the same scaling
      factor~$N(\d)N(\q')^{-1} = N(\d'\q)^{-1}$.  Omitting these, we
      have
    \begin{align*}
    (\a\oplus\b,\a\oplus\n^{-1}\b)
    &\xrightarrow{W_{\q'}^{\n}}
    (\a\oplus\q'^{-1}\b, \q'^{-1}\a\oplus\n^{-1}\b)\\
    &\xrightarrow{A_{\d}}
    (\a\oplus\d'^{-1}\q^{-1}\b, \q^{-1}\a\oplus\d'^{-1}\m^{-1}\b),
    \end{align*}
    while also
    \begin{align*}
    (\a\oplus\b,\a\oplus\n^{-1}\b)
    &\xrightarrow{A_{\d'}}
    (\d'\a\oplus\b, \d'\a\oplus\m^{-1}\b)\\
    &\xrightarrow{T_{\d',\d'}^{\m}}
    (\a\oplus\d'^{-1}\b, \a\oplus\d'^{-1}\m^{-1}\b)\\
    &\xrightarrow{W_{\q}^{\m}}
    (\a\oplus\d'^{-1}\q^{-1}\b, \q^{-1}\a\oplus\d'^{-1}\m^{-1}\b).
    \end{align*}

  \end{enumerate}
\end{proof}

\goodbreak
\subsection{Alternative normalisations and dual operators}
\label{subsec:alt-norm}
\subsubsection{Scaling}  It is possible to define the
operators~$T_{\a}$ and~$T_{\a,\a}$ without the scaling
factors~$N(\a)^{-1}$ and~$N(\a)^{-2}$ respectively.  The effect on the
formal Dirichlet series $\sum T_\a N(\a)^{-s}$ would be to shift the
variable~$s$ by~$1$.  This is the normalization used by Bygott
in~\cite{JBthesis}.  Similarly for $W_{\q}$.

\subsubsection{Using sublattices: dual operators}
In our definition of Hecke operators (see Definitions~\ref{def:Ta}
and~\ref{def:Taa} above), we map each lattice to one or more
superlattices; this follows the treatment (for $K=\Q$) by Lang
\cite{LangModularForms} and Koblitz \cite{KoblitzECMF}.  One can
alternatively use sublattices instead, as in Serre \cite{SerreCA} (for
$K=\Q$ and level~$1$).  The latter approach is slightly more
convenient for computation, as the operators can be described
explicitly in terms of $2\time2$ matrices with entries in~$\OK$, as we
show later in Section~\ref{sec:hecke-matrices}.  We summarise this
second approach here, defining new `dual' operators:
\begin{itemize}
  \item $\widetilde{T}_{\a}$ (for integral ideals~$\a$ coprime to~$\n$);
  \item $\widetilde T_{\a,\a}$ (for fractional ideals~$\a$ coprime to~$\n$);
  \item $\widetilde{W}_\q(L,L')$ (for~$\q\mid\mid\n$).
\end{itemize}

The operator~$\widetilde{T}_{\a}$ is defined for ideals~$\a$ coprime
to~$\n$ as follows.  Observe that whenever $L\subseteq M$ with
$[M:L]=\a$, then $L\subseteq M\subseteq\a^{-1}L$ with also
$[\a^{-1}L:M]=\a$, and hence $\a L\subseteq\a M\subseteq L$
where~$[L:\a M]=\a$.  Thus to each superlattice $M\supseteq L$ with
index~$\a$ there corresponds a sublattice~$\a M\subseteq L$ also with
index~$\a$.  This correspondence extends to modular points in either
$\M_0(\n)$ or~$\M_1(\n)$.

Now we set
\[
  \widetilde{T}_\a(L,L',\beta) = {N(\a)}\sum_{\substack{M\subseteq
      L\\ [L:M]=\a\\ (M,M',\beta')\in\M}}(M,M',\beta').
\]
Here, $M'=M+\a L'$, and the term~$(M,M',\beta')$ is only included when
$M'/M\cong \OK/\n$; also, $\beta'=u\beta\in\a L'\subseteq M'$ where
$u\in\a$ and~$u\equiv1\pmod{\n}$.  A simple calculation using the
correspondence between superlattices and sublattices shows that
\[
   \widetilde{T}_{\a} = T_{\a,\a}^{-1}T_{\a}.
\]

Next, for~$\a$ coprime to the level~$\n$, set
\[
\widetilde T_{\a,\a}=T_{\a^{-1},\a^{-1}},
\]
so that $\widetilde T_{\a,\a}((L,L',\beta))={N(\a)}^{2}(\a L,\a
L',\beta')$ with $\beta'$ defined as above.

These new operators satisfy the relations (1)--(4) and~(6)
of~\Cref{prop:Hecke-relations}.

Lastly, in the definition of Atkin-Lehner operators above
in~\Cref{def:AL}, we map the modular point~$(L,L')\in\M_0(\n)$
to~$(L+\q' L', \q^{-1}L+L')$, where the lattice $L_2=L+\q' L'$
satisfies $L\subseteq L_2\subseteq L'$. (Here, as above, $\n=\q\q'$
with $\q$, $\q'$ coprime.)  For the alternative normalization we
define
\[
\widetilde{W}_\q(L,L') = N(\q)(\q L+\n L', L + \q L'),
\]
noting that $\q L+\n L'\subseteq L$.  Then
\[
\widetilde{W}_\q = T_{\q,\q}^{-1}W_q
\]
and
\[
\widetilde{W}_{\q}^2 = \widetilde{T}_{\q,\q}.
\]

\begin{rmk}
  One reason for calling these ``dual operators'' is that, in a
  suitable context, they are the adjoint operators with respect to an
  inner product generalising the Petersson inner product: see
  section~\ref{subsec:level-changing}.

  For an alternative viewpoint on this duality, we can also define a
  ``Hecke correspondence'' on~$\M_1(\n)$, for each fixed ideal~$\a$,
  to be the subset~$T(\a)$ of~$\M_1(\n)\times\M_1(\n)$ consisting of
  pairs~$(P,Q)$ with $P=(L,L',\beta)$ and~$Q=(M,M',\gamma)$ such
  that~$L\subseteq M$ and $[M:L]=\a$.  Then the modular points
  appearing in the definition of~$T_{\a}(P)$ are precisely those~$Q$
  for which~$(P,Q)\in T(\a)$, while reciprocally, the modular points
  which appear in the definition of~$\widetilde{T}_{\a}(Q)$ are
  those~$P$ for which~$(P,Q)\in T(\a)$.
\end{rmk}

\subsection{Grading the Hecke algebra}

Each of the operators~$T$ we have just defined has a well-defined
\emph{ideal class}~$[T]$ such that $T(\M^{(c)}) \subseteq \M^{([T]c)}$ for
each~$c\in\Cl(K)$.  From the definitions, we see that
\begin{multicols}{2}
\begin{itemize}
\item $[T_{\a}] = [\a]$;
\item $[T_{\a,\a}] = [\a]^2$;
\item $[W_{\q}] = [\q]$;
\end{itemize}
\columnbreak
\begin{itemize}
\item $[\widetilde{T}_{\a}] = [\a]^{-1}$;
\item $[\widetilde{T}_{\a,\a}] = [\a]^{-2}$;
\item $[\widetilde{W}_{\q}] = [\q]^{-1}$.
\end{itemize}
\end{multicols}
In explicit computations it is simplest to work with \emph{principal}
operators, whose class is trivial.  The basic principal operators have
the form $T_{\a,\a}T_{\b}$ (or $T_{\a,\a}T_{\b}W_{\q}$) where $\a^2\b$
(or $\a^2\b\q$) is principal; they map each $\M^{(c)}$ to itself.  In
Section~\ref{sec:hecke-matrices} we will describe the action of
principal operators explicitly in terms of matrices in~$M_2(K)$, as is
done in the classical setting.

Note that $T_{\a,\a}T_{\b}$ (respectively, $T_{\a,\a}T_{\b}W_{\q}$) is
principal if and only if $\widetilde{T}_{\a,\a}\widetilde{T}_{\b}$
(respectively, $\widetilde{T}_{\a,\a}\widetilde{T}_{\b}\widetilde{W}_{\q}$) is.

Each element of~$\T$ is (non-uniquely) a linear combination of the
basic operators.  Since in each of the relations listed
in~\Cref{prop:Hecke-relations} all terms have the same class, it
follows that we have a well-defined decomposition
\begin{equation}
  \label{eqn:T-decomp}
  \T = \bigoplus_{c\in\Cl(K)}\T_c
\end{equation}
where~$\T_c$ is spanned by all basic operators with class~$c$.  By an
operator of class~$c$, we now mean any element of~$\T_c$; thus, every
Hecke operator is uniquely expressible as a sum of operators, one in
each graded component~$\T_c$, and operators~$T\in\T_c$ are
characterized by the property that
\begin{equation}
  \label{eqn:T-M-action}
   T(\Z\M^{(c')}) \subseteq \Z\M^{(cc')}\qquad\forall c'\in\Cl(K).
\end{equation}
This decomposition of the algebra~$\T$ is a grading by the abelian
group~$\Cl(K)$, on account of the following.
\begin{prop}
  \label{prop:T-grading}
  \[
  \T_{c_1} \T_{c_1} \subseteq \T_{c_1c_2}.
  \]
\end{prop}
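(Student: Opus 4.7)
The plan is to reduce the proposition to the characterization given in~\eqref{eqn:T-M-action}, which expresses membership in $\T_c$ purely in terms of how an operator permutes the class components of $\M$. Once this is done, the argument becomes a one-line composition of inclusions.

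More concretely, I would proceed as follows. First, take two basic generators $T_1 \in \T_{c_1}$ and $T_2 \in \T_{c_2}$, of the form $T_{\a,\a}T_{\b}$ (or including a factor $W_{\q}$). By the class assignment listed just before the statement, $T_1(\Z\M^{(c')}) \subseteq \Z\M^{(c_1 c')}$ and $T_2(\Z\M^{(c')}) \subseteq \Z\M^{(c_2 c')}$ for every $c' \in \Cl(K)$. Composing these two inclusions yields
\[
   (T_1 T_2)(\Z\M^{(c')}) \subseteq T_1(\Z\M^{(c_2 c')}) \subseteq \Z\M^{(c_1 c_2 c')}
\]
for all $c'$. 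By the characterization~\eqref{eqn:T-M-action}, and the fact that distinct class components $\M^{(c)}$ are disjoint (so a sum of operators from different graded pieces can satisfy such an inclusion only if all but one summand vanish), this forces $T_1 T_2 \in \T_{c_1 c_2}$.

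Second, extend to arbitrary elements by bilinearity. Any element of $\T_{c_i}$ is a $\Q$-linear combination of basic operators of class $c_i$, and the product is a $\Q$-linear combination of pairwise products of basic operators, each lying in $\T_{c_1 c_2}$ by the previous step; since $\T_{c_1 c_2}$ is a $\Q$-subspace, the sum lies there as well. This completes the proof.

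I do not foresee any genuine obstacle: the content is entirely in having set up the decomposition~\eqref{eqn:T-decomp} via the class action characterization~\eqref{eqn:T-M-action}, and in having computed the classes of each basic operator. The only point to be slightly careful about is the uniqueness of the decomposition, which is what justifies the step ``satisfying the characterization for class $c_1 c_2$ implies lying in $\T_{c_1 c_2}$''; this is already built into the direct sum structure of~\eqref{eqn:T-decomp}, so no further work is needed.
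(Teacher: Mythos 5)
Your proof is correct and takes essentially the same approach as the paper's, which likewise invokes the characterization~\eqref{eqn:T-M-action} of $\T_c$ via class components (and the classes of basic operators) and notes that the claim follows immediately; you have simply spelled out the composition of inclusions and the bilinearity step in more detail.
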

\begin{proof}
This follows immediately from the preceding characterization
of~$\T_c$, or simply from the definition of the class of a basic
operator.
\end{proof}

\begin{cor}
The component $\T_1$, where $1$ denotes the trivial ideal class, is a
subalgebra of~$\T$.  More generally, if $H\le\Cl(K)$ is any subgroup
then $\T_H:=\oplus_{c\in H}\T_c$ is a subalgebra of~$\T$.\qed
\end{cor}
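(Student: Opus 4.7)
The plan is to deduce both claims directly from \Cref{prop:T-grading} together with the direct-sum decomposition in~\eqref{eqn:T-decomp}. The first claim on $\T_1$ is a special case of the second (take $H = \{1\}$, which is a subgroup of $\Cl(K)$ since $\Cl(K)$ is abelian), so I would prove the general statement and remark that $\T_1$ follows.

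To verify that $\T_H = \bigoplus_{c\in H}\T_c$ is a subalgebra, I need three things: that it is a $\Q$-vector subspace, that it is closed under multiplication, and that it contains the identity. The first is immediate, since it is by definition a direct sum of the subspaces $\T_c \subseteq \T$ for $c$ ranging over a subset of $\Cl(K)$, and hence is a $\Q$-subspace of $\T = \bigoplus_{c\in\Cl(K)} \T_c$.

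For closure under multiplication, I would take arbitrary $S, T \in \T_H$, decompose them uniquely as $S = \sum_{c \in H} S_c$ and $T = \sum_{c' \in H} T_{c'}$ with $S_c \in \T_c$ and $T_{c'} \in \T_{c'}$, and expand
\[
ST = \sum_{c, c' \in H} S_c T_{c'}.
\]
By \Cref{prop:T-grading}, each summand $S_c T_{c'}$ lies in $\T_{cc'}$; and since $H$ is a subgroup of $\Cl(K)$, the product $cc'$ lies in $H$. Hence every term of the sum lies in some $\T_{c''}$ with $c'' \in H$, so $ST \in \T_H$.

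Finally, the identity operator is $T_{\OK}$, which has class $[\OK] = 1$, so lies in $\T_1 \subseteq \T_H$ (using that $1 \in H$, since $H$ is a subgroup). There is really no obstacle here: the entire statement is a formal consequence of the grading together with the subgroup property of $H$, and the verification amounts to unpacking the definitions. The only observation worth emphasising is the role of the axiom $1 \in H$ to ensure that the identity lies in $\T_H$, and the role of closure under multiplication in $H$ to ensure that products of graded pieces stay inside $\T_H$.
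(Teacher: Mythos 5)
Your proof is correct and is exactly the routine verification the paper has in mind when it marks the corollary with \qed as an immediate consequence of \Cref{prop:T-grading} and the grading~\eqref{eqn:T-decomp}; there is no divergence in approach, just a fuller spelling out of the subspace, closure, and identity checks.
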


\section{Eigenvalue systems}\label{sec:eigenvalue-systems}
We continue to set $\M=\M_0(\n)$ or~$\M_1(\n)$, the set of modular
points for either~$\Gon$ or~$\Gin$.  Given a vector space on
which~$\T$ acts linearly, such as~$\C\M$, we may consider
simultaneous eigenvectors for all~$T\in\T$, and to such an eigenvector
we can associate a system of eigenvalues, one for each~$T\in\T$; this
determines an algebra homomorphism~$\T\to\C$.  In this section we
first study such eigenvalue systems or \emph{eigensystems} from a
purely formal point of view, and then apply the results obtained both
to~$\C\M$ and to \emph{formal modular forms}, which are vector-valued
functions on~$\M$.

Recall that a ray class character\footnote{Here we take~$\n$ to be an
ideal of~$\OK$ rather than a more general modulus with possible
components at real infinite places of~$K$, where $K$ is a number
field.  The theory we develop here could be extended to include
consideration of real places.} of~$K$ with modulus~$\n$ is a character
of the ray class group~$\J^\n_K/\PP^\n_K$, that is, a
homomorphism~$\chi:\J^\n_K\to\C^\times$, where~$\J^\n_K$ is the group
of fractional ideals of~$K$ coprime to~$\n$, which is trivial on the
subgroup~$\PP^\n_K$ of principal ideals generated by an
element~$a\equiv1\pmod{\n}$.  The restriction of~$\chi$ to principal
ideals gives a Dirichlet character
$(\OK/\n)^\times/\OK^\times\to\C^\times$, so that $\chi$ is an
extension of a Dirichlet character (which is trivial on units) by a
character of the ideal class group.  When~$\n$ is the unit ideal, one
obtains \emph{unramified characters} which are characters of the ideal
class group.

\begin{defn}
An~\emph{eigenvalue system of level~$\n$} is a ring homomorphism
\[
     \lambda: \T \to \C.
\]
An~\emph{eigenvalue system of level~$\n$ and character~$\chi$}, where
$\chi:\J^\n_K\to\C^\times$ is a character modulo~$\n$, is an
eigenvalue system having the additional property that
\[
     \lambda(T_{\a,\a})=\chi(\a)
\]
for all ideals~$\a$ coprime to~$\n$.
\end{defn}
Since $\T$ is generated (as a ring) by the operators $T_{\a}$ for
ideals $\a$ and $T_{\a,\a}$ for ideals~$\a$ coprime to~$\n$, to define
an eigenvalue system we must specify the values (in~$\C$ and~$\C^*$
respectively) of the two functions $\alpha:\a\mapsto\alpha(\a)=\lambda(T_\a)$ for all
ideals~$\a$, and $\chi:\a\mapsto\chi(\a)=\lambda(T_{\a,\a})$ for all ideals~$\a$
coprime to~$\n$.  These values must satisfy the following
multiplicative relations (numbered as in~\Cref{prop:Hecke-relations}):
\begin{enumerate}
\item $\alpha(\OK)=\chi(\OK)=1$.
\item For all~$\a,\b$,
\[
  \chi(\a\b) = \chi(\a)\chi(\b).
\]
\item Trivially, $\chi(\a)\alpha(\b)=\alpha(\b)\chi(\a)$.
\item If $\a$ and $\b$ are coprime then
\[
   \alpha(\a\b) = \alpha(\a)\alpha(\b).
\]
\item If $\p$ is a prime dividing~$\n$, then for all~$n\ge1$,
\[
  \alpha(\p^n) = \alpha(\p)^n.
  \]
\item If $\p$ is a prime not dividing~$\n$, then for all~$n\ge1$,
\[
  \alpha(\p^n)\alpha(\p) = \alpha(\p^{n+1}) +
  N(\p)\alpha(\p^{n-1})\chi(\p).
\]
\end{enumerate}
We write $\lambda=(\alpha,\chi)$ for the eigensystem determined by
functions~$\alpha$ and~$\chi$ satisfying these conditions.  We see
from~(2) that $\chi:\J_K^\n\to\C^*$ is a group homomorphism, and is
the character of the eigensystem~$\lambda$.  It is for this reason
that we defined $T_{\a,\a}$ for fractional ideals, since they form a
group rather than just a monoid, and so the values~$\chi(\a)$ are
forced to be nonzero.

\begin{rmk}
  If we were to set $\chi(\a)=0$ when $\a$ is not coprime to~$\n$,
  then these multiplicative relations could be expressed via a formal
  Dirichlet series and Euler product:
  \[
  \sum_\a \alpha(\a)N(\a)^{-s} =
  \prod_\p(1-\alpha(\p)N(\p)^{-s}+\chi(\p)N(\p)^{1-2s})^{-1}.
  \]
\end{rmk}

If $\lambda=(\alpha,\chi)$ is an eigenvalue system, and $\psi$ is a
character of~$\J_K^\n$, then we can define the \emph{twist}
$\lambda\otimes\psi$ of $\lambda$ by~$\psi$ to be the eigenvalue
system $(\alpha\psi,\chi\psi^2)$:
\begin{align*}
(\lambda\otimes\psi)(T_\a) &= \psi(\a)\lambda(T_\a) = \psi(\a)\alpha(\a); \\
(\lambda\otimes\psi)(T_{\a,\a}) &= \psi(\a)^2\lambda(T_{\a,\a}) = \psi(\a)^2\chi(\a).
\end{align*}
Hence, if~$\lambda$ has character~$\chi$ then $\lambda\otimes\psi$ has
character~$\chi\psi^2$.  In particular, the twist of~$\lambda$ by a
quadratic character~$\psi$ has the same character as~$\lambda$.  In
this situation, it is possible that $\lambda\otimes\psi=\lambda$, in
which case $\alpha(\a)=0$ for all~$\a$ such that $\psi(\a)=-1$. Such
an eigensystem is said to have \emph{inner twist} by $\psi$.

\begin{rmk}
It would be possible to restrict attention to eigensystems whose
characters have additional properties, such as being ray class
characters modulo~$\n$, or unramified characters, which may be viewed
as characters of the ideal class group.
\end{rmk}

\subsection{Restriction to principal operators}

In practical situations, where we are computing the action of Hecke
operators explicitly on some finite-dimensional vector space, it is
simpler to deal with principal operators, i.e. those in $\T_1$, as
these may be expressed using matrices in~$\Mat_2(K)$.  These matrices
will be described in detail in Section~\ref{sec:hecke-matrices}.  This
leads us to consider the question: to what extent is an eigenvalue
system~$\lambda$ determined by its restriction to~$\T_1$?  We will
give a complete theoretical answer to this question, and then make
some remarks about how to handle the situation in practice, if we only
compute principal Hecke operators but wish to determine the entire
eigenvalue system.

We continue to identify unramified characters of $\J_K^\n$ with
characters of the class group~$\Cl(K)$.

\begin{thm}
  \label{thm:eig-systems}
  Two eigenvalue systems $\lambda_1$ and $\lambda_2$ of the same level
  $\n$ have the same restriction to the subalgebra $\T_1$ of principal
  operators if and only if $\lambda_2=\lambda_1\otimes\psi$ for some
  unramified character~$\psi$.

  In this case, the characters~$\chi_1$, $\chi_2$ of $\lambda_1$, $\lambda_2$
  satisfy $\chi_2=\chi_1\psi^2$, and hence $\lambda_1$ and $\lambda_2$
  have the same character if any only if $\psi$ is quadratic.
\end{thm}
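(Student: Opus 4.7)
The strategy hinges on the grading $\T = \bigoplus_{c\in\Cl(K)}\T_c$ from~\Cref{prop:T-grading}. For the forward direction, any basic operator $T = T_{\a,\a}T_{\b}$ satisfies
\[
    (\lambda_1\otimes\psi)(T) = \psi(\a)^2\chi_1(\a)\cdot\psi(\b)\alpha_1(\b) = \psi([\a]^2[\b])\,\lambda_1(T),
\]
which collapses to $\lambda_1(T)$ when $T\in\T_1$ (since $\psi$ is trivial on the principal class $[\a]^2[\b]=1$); linearity extends this to all of $\T_1$. The final claim about characters is immediate from $\chi_2 = \psi^2\chi_1$, so $\chi_1=\chi_2$ iff $\psi^2 = 1$.

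For the converse, write $\lambda_i = (\alpha_i,\chi_i)$ and put $\eta := \chi_2/\chi_1$. Since $T_{\a,\a}\in\T_{[\a]^2}$ lies in $\T_1$ whenever either $\a$ is principal or $[\a]^2 = 1$, the character $\eta$ factors through $\Cl(K)$ and is trivial on the $2$-torsion subgroup $\Cl(K)[2]$. By standard Pontryagin duality for finite abelian groups, vanishing on $\Cl(K)[2]$ is exactly the obstruction for $\eta$ to admit a square root, so there exists a character $\psi$ of $\Cl(K)$ with $\psi^2 = \eta$, unique up to a quadratic twist. The task is now to fix the correct $\psi$ among these $|\Cl(K)[2]|$ candidates so that $\alpha_2 = \psi\cdot\alpha_1$. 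The key tool is the principal operator $T_{\a}^{2}T_{\a,\a}^{-1}\in\T_1$, valid for any integral $\a$ coprime to~$\n$: comparing $\lambda_1,\lambda_2$ yields $\alpha_2(\a)^2/\chi_2(\a) = \alpha_1(\a)^2/\chi_1(\a)$, so $\alpha_1(\a)=0\iff\alpha_2(\a)=0$, and otherwise $\alpha_2(\a)/\alpha_1(\a) = \varepsilon(\a)\,\psi([\a])$ for a sign $\varepsilon(\a)\in\{\pm1\}$.

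To make $\varepsilon$ into a character, I would use two further principal operators. Applying $T_{\a}T_{\a'}T_{\b,\b}^{-1}\in\T_1$ to coprime integral $\a,\a'$ in a common class $c$ (with $[\b]=c$) gives $\varepsilon(\a)\varepsilon(\a')=1$, so $\varepsilon$ descends to a function on the subset $G_0\subseteq\Cl(K)$ of classes supporting an ideal with $\alpha_1\ne 0$; $G_0$ contains~$1$ and is closed under products (by coprime multiplicativity of~$\alpha$ and the existence, in each class, of ideals coprime to any fixed one), so it is a subgroup of the finite group $\Cl(K)$. The three-ideal operator $T_{\a_1}T_{\a_2}T_{\a_3}T_{\b,\b}^{-1}\in\T_1$ with $[\a_i]=c_i$, $c_3=c_1c_2$, and $[\b]=c_1c_2$ then forces $\varepsilon(c_1c_2)=\varepsilon(c_1)\varepsilon(c_2)$. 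Since any character of a subgroup of a finite abelian group extends to the full group, $\varepsilon$ extends to a quadratic character $\tilde\varepsilon$ of $\Cl(K)$; replacing $\psi$ by $\tilde\varepsilon\psi$ preserves $\psi^2=\eta$ while enforcing $\alpha_2/\alpha_1 = \psi$ on every ideal in $G_0$.

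To promote this identity to arbitrary integral ideals, I would use the Hecke recursion of~\Cref{prop:Hecke-relations}(6): $\alpha(\p^e) = P_e(\alpha(\p),\chi(\p))$ for a polynomial $P_e$ satisfying the homogeneity $P_e(ta,t^2c) = t^eP_e(a,c)$, so the identity on single primes propagates to all prime powers, and coprime multiplicativity of $\alpha$ extends it to every $\a$; on ideals in $\Cl(K)\setminus G_0$ both sides vanish identically. The main obstacle I anticipate is the sign analysis of the third paragraph, namely verifying that the parities $\varepsilon(\a)$ assemble into a genuine character on the subgroup $G_0$ (rather than a mere $\pm1$-valued function) and then extending without disturbing the values $\psi^2=\eta$ already fixed on square classes. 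This is exactly where the grading compatibility $\T_{c_1}\T_{c_2}\subseteq\T_{c_1c_2}$ plays its decisive role.
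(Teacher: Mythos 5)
Your proposal follows the same overall strategy as the paper's proof: compare characters on the $2$-torsion to obtain $\psi$ with $\psi^2=\chi_2/\chi_1$, show that $\alpha_2/\alpha_1$ differs from $\psi$ by a sign on each ideal, assemble those signs into a quadratic character on a subgroup $H_0\subseteq\Cl(K)$, and extend it to all of $\Cl(K)$. The operators you use ($T_\a^2T_{\a,\a}^{-1}$, $T_\a T_{\a'}T_{\b,\b}^{-1}$, etc.) are all legitimate principal operators, and your sign-analysis is the right idea. Two points deserve attention.

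\textbf{Primes dividing $\n$.} Your pivotal operator $T_\a^2T_{\a,\a}^{-1}$ exists only for $\a$ coprime to $\n$, and your final ``propagate via the recursion'' step invokes only~\Cref{prop:Hecke-relations}(6), which likewise concerns primes $\p\nmid\n$. Nothing in your argument pins down $\alpha_2(\p)$ for $\p\mid\n$. The paper sidesteps this by working with $T_\a T_{\b,\b}$ (with $\b\in\J_K^\n$ but $\a$ arbitrary, including ramified $\a$) to conclude $\alpha_1=\alpha_2$ on square classes, and then with the bare operator $T_\a^2$, which lies in the square class $[\a]^2$, to get $\alpha_1(\a)^2=\alpha_2(\a)^2$ for \emph{every} $\a$. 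Replacing your $T_\a^2T_{\a,\a}^{-1}$ by $T_\a T_{\b,\b}$ and $T_\a^2$ in the same roles closes this gap cleanly.

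\textbf{Closure of $G_0$.} You claim $G_0$ is closed under products ``by coprime multiplicativity of $\alpha$ and the existence, in each class, of ideals coprime to any fixed one.'' But knowing an ideal class contains \emph{some} ideal coprime to a given one does not guarantee such an ideal with $\alpha\ne0$, which is what you actually need. The paper's argument is cleaner and avoids this entirely: it defines $H_0(\lambda)$ as the set of classes $c$ with $\lambda|_{\T_c}\ne0$ and observes that, since $\lambda$ is a ring homomorphism and $\T_{c_1}\T_{c_2}\subseteq\T_{c_1c_2}$, this set is automatically closed under products. One then checks (using $\lambda(T_{\a,\a})=\chi(\a)\ne0$) that $\Cl(K)^2\subseteq H_0$. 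You may want to adopt that formulation; alternatively, supply the missing step that each $c\in G_0$ contains an ideal with $\alpha\ne0$ coprime to any given ideal. (Once established, this also identifies your $G_0$ with the paper's $H_0$.) With these two repairs the argument matches the paper's.
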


\begin{proof}
  One direction is clear: from the definition of twisting, we see that
  twisting by an unramified character~$\psi$ does not change the value
  of the eigensystem on principal ideals since $\psi$ is trivial on
  these.

  Assume that $\lambda_1=(\alpha_1,\chi_1)$ and
  $\lambda_2=(\alpha_2,\chi_2)$ have the same restriction to~$\T_1$.
  We first compare the characters~$\chi_1$, $\chi_2$. For ideals
  $\a\in\J_K^\n$ such that $\a^2$ is principal, the operator
  $T_{\a,\a}$ is principal, so
  \[
  \chi_1(\a) = \lambda_1(T_{\a,\a}) = \lambda_2(T_{\a,\a}) = \chi_2(\a).
  \]
  Hence $\chi_2\chi_1^{-1}$ is trivial on the $2$-torsion subgroup
  $\Cl[2]$, which implies that $\chi_2\chi_1^{-1}=\psi^2$ for some
  character $\psi$.  Replacing $\lambda_1$ by~$\lambda_1\otimes\psi$,
  we may now assume that $\chi_1=\chi_2$, and set $\chi=\chi_1$.

  Note that we can still twist $\lambda_1$ by an unramified quadratic
  character without changing~$\chi_1$; our aim now is to show that for
  some quadratic (or trivial) unramified character~$\psi$ we
  have~$\lambda_1\otimes\psi=\lambda_2$.  When the class number is
  odd, there are no quadratic unramified characters, so we must have
  that $\lambda_1=\lambda_2$ already in this case.

  For all ideals~$\a$ whose class is a square we have
  $\alpha_1(\a)=\alpha_2(\a)$; for if we take~$\b\in\J_K^\n$ such
  that~$\a\b^2$ is principal, then~$T_\a T_{\b,\b}\in\T_1$, so
  \[
  \alpha_1(\a)\chi(\b) = \lambda_1(T_\a T_{\b,\b}) = \lambda_2(T_\a
  T_{\b,\b}) = \alpha_2(\a)\chi(\b).
  \]
  Since $\chi(\b)\not=0$, this gives $\alpha_1(\a)=\alpha_2(\a)$, as
  claimed.  Hence the restrictions of~$\lambda_1$ and~$\lambda_2$
  to~$\T_{\Cl(K)^2}$ are equal.  This completes the proof in the case
  of odd class number, since then~$\Cl(K)^2=\Cl(K)$.

  The case of even class number requires extra work.

  Note that the restriction of every eigensystem~$\lambda =
  (\alpha,\chi)$ to~$\T_c$ cannot be identically zero for any
  class~$c$ which is a square, since these classes contain the
  operators~$T_{\a,\a}$, and $\lambda(T_{\a,\a})=\chi(\a)\not=0$.
  However, it is possible that for a non-square class~$c$ we may
  have~$\lambda(T)=0$ for all~$T\in\T_c$.  The set of classes~$c$ such
  that $\lambda$ is not identically zero on~$\T_c$ is a
  subgroup~$H_0(\lambda)$ of~$\Cl(K)$, containing~$\Cl(K)^2$, since if
  the restrictions of~$\lambda$ to two classes~$c_1$ and~$c_2$ are
  both not identically zero then (since $\lambda$ is fully
  multiplicative) its restriction to class~$\T_{c_1c_2}$ is also
  nonzero.

  We now compare the values of~$\alpha_1$ and~$\alpha_2$ on
  ideals~$\a$ in non-square classes.  We always have
  \[
  \alpha_1(\a)^2 = \lambda_1(T_\a)^2 = \lambda_1(T_\a^2) =
  \lambda_2(T_\a^2) = \alpha_2(\a)^2,
  \]
  since $T_\a^2$ has square class.  So $\alpha_2(\a)=\pm
  \alpha_1(\a)$.  This implies that $H_0(\lambda_1)=H_0(\lambda_2)$,
  and we denote this common subgroup by~$H_0$.  We claim that there is
  a quadratic character~$\psi$ of~$H_0$ such that
  $\alpha_2(\a)=\psi(\a)\alpha_1(\a)$ for all~$\a\in\J_K^\n$ whose
  classes lie in~$H_0$.  For $c\in H_0$, choose $\a\in c$
  with~$\alpha_1(\a) = \alpha_2(\a) \not= 0$, and define~$\psi(\a) =
  \alpha_2(\a)/\alpha_1(\a) = \pm1$.  We check that this sign is
  independent of the choice of~$\a\in c$, and gives a well-defined
  homomorphism $H_0\to\{\pm1\}$. To this end, we have:
  \begin{itemize}
  \item $\psi(\a)=1$ if $c\in \Cl(K)^2$, and in particular if~$\a$ is
    principal.
  \item $\psi(\a)=\psi(\a')$ for all~$\a,\a'\in c$
    with~$\alpha(\a),\alpha(\a')\not=0$, so $\psi$ is well-defined,
    only depending on the class~$c$. To see this, note that
    $T_{\a}T_{\a'}\in\T_{c^2}$, and hence
    \[
    \alpha_1(\a)\alpha_1(\a') = \lambda_1(T_{\a})\lambda_1(T_{\a'}) =
    \lambda_1(T_{\a}T_{\a'}) = \lambda_2(T_{\a}T_{\a'}) =
    \alpha_2(\a)\alpha_2(\a'),
    \]
    so
    \[
    \psi(\a')=\alpha_2(\a')/\alpha_1(\a')=\alpha_1(\a)/\alpha_2(\a)=\psi(\a)^{-1}=\psi(\a).
    \]
    (Here it is useful to remember that while the $\lambda_i$ are
    completely multiplicative as functions on Hecke operators, the
    $\alpha_i$ (as functions on ideals) are not, since in
    general~$T_{\a}T_{\a'}\not=T_{\a\a'}$; this is why we
    use~$T_{\a}T_{\a'}$ here rather than~$T_{\a\a'}$.)
  \item For $i=1,2$ let $\a_i\in c_i$ where $c_i\in H_0$ and
    $\alpha(\a_i)\not=0$.  Without loss of generality, we may assume
    that $\a_1,\a_2$ are coprime, so
    that~$\alpha_1(\a_1\a_2)=\alpha_1(\a_1)\alpha_1(\a_2)$ and
    similarly for~$\alpha_2$.  Then
    \[
    \psi(\a_1\a_2)
    = \frac{\alpha_2(\a_1\a_2)}{\alpha_1(\a_1\a_2)}
    = \frac{\alpha_2(\a_1)}{\alpha_1(\a_1)}\frac{\alpha_2(\a_2)}{\alpha_1(\a_2)}
    = \psi(\a_1)\psi(\a_2).
    \]
  \end{itemize}
Hence $\psi$ induces a well-defined quadratic character on~$H_0$,
trivial on~$\Cl(K)^2$, such that $\alpha_2(\a)=\psi(\a)\alpha_1(\a)$
for all~$\a\in\J_K^\n$ with class $[\a]\in H_0$.

This completes the proof in case $H_0=\Cl(K)$.

If $H_0$ is a proper subgroup of~$\Cl(K)$, we can extend the domain of
definition of~$\psi$ to all of~$\Cl(K)$ in $[\Cl(K):H_0]$ ways, giving
this number of unramified quadratic characters~$\psi$ extending the
character of $H_0$ defined above.  For each such extension we
have~$\lambda_2 = \lambda_1\otimes\psi$, since if $[\a]\notin H_0$
then~$\alpha_2(\a) = \alpha_1(\a) = 0$.

This completes the proof.  Note that in this last case (when
$H_0\not=\Cl(K)$) the character~$\psi$ is not unique, and $\lambda_1$
has inner twists by any quadratic character which is trivial on~$H_0$.
\end{proof}

In summary, this proof shows that when $\lambda_1$ and~$\lambda_2$
have the same restriction to the subalgebra~$\T_1$ of principal
operators, then we may first twist~$\lambda_1$ by an unramified
character so that its character matches that of~$\lambda_2$, and then
twist again by a quadratic unramified character to give~$\lambda_2$
precisely without affecting the character of the eigensystem further.
The overall twist required is unique except in case the $\lambda_i$
have inner twists; in that case, both $\lambda_i$ are identically zero
on operators in classes forming the complement of a proper
subgroup~$H_0$, and the number of inner twists (including the trivial
character) is the index~$[\Cl(K):H_0]$, which is a power of~$2$,
since~$\Cl(K)^2 \subseteq H_0 \subseteq \Cl(K)$.

\begin{rmk}
  \label{rmk:only-one-inner-twist}
  In this abstract setting, an eigensystem could have more than one
  inner twist; in other words, the index of~$H_0$ in~$\Cl(K)$ may be
  greater than~$2$.  However, in applications to automorphic forms
  there can only be one (non-trivial) inner twist,
  i.e.~$[\Cl(K):H_0]\le2$.  This follows from the existence of
  irreducible $2$-dimensional Galois representations associated to
  Hecke eigensystems, and the proof is beyond the scope of this paper.
\end{rmk}

\begin{rmk}
  \label{rmk:eigensystem-algorithm}
The proof of \Cref{thm:eig-systems} can be turned into an algorithm
for recovering a full eigensystem~$\lambda=(\alpha,\chi)$ with
unramified character from its restriction to~$\T_1$, unique up to
unramified twist.  Briefly, one first chooses an unramified
character~$\chi$ matching the values~$\lambda(T_{\a,\a})$ where $\a^2$
is principal; in the odd class number case this is no restriction,
and~$\chi$ can be arbitrary; in particular, we may take~$\chi$ to be
trivial. Then it suffices (in view of the multiplicative relations) to
define~$\alpha$ on prime ideals~$\p$.  When $\p$ has square class, use
the values~$\lambda(T_{\a,\a}T_{\p})$ and $\chi(\a)$, where $\a$ is
chosen so that $\a^2\p$ is principal, to define $\alpha(\p)$.
Otherwise, use the relation $\alpha(\p)^2=\alpha(\p^2)+N(\p)\chi(\p)$
to determine~$\alpha(\p)$ up to sign; here we may take an ideal~$\a$
such that $\a\p$ is principal to determine $\alpha(\p^2) =
\lambda(T_{\a,\a}T_{\p^2})/\chi(\a)$.  The signs must then be chosen
so as to preserve multiplicativity, which allows $r$ independent
choices where $[H_0:\Cl(K)^2]=2^r$.  We omit the details, an account
of which (in the Bianchi setting) may be found in~\cite{CTY}; code
implementing this procedure for Bianchi modular forms, where $K$ is an
arbitrary imaginary quadratic field, is included in the author's {\tt
  C++} package~\cite{bianchi-progs}.
\end{rmk}

We single out two special cases, being at opposite extremes.

\subsubsection{Special case: odd class number}   When $\Cl(K)$ has
odd order, the above situation simplifies somewhat, since all ideal
classes are squares, the class group has no quadratic characters, and
$H_0=\Cl(K)^2=\Cl(K)$.  Now, if two eigenvalue systems have the same
restriction to~$\T_1$ and the same character, then they are equal.
More generally, if $\lambda$ and~$\lambda'$ have the same restriction
to~$\T_1$ then $\lambda'=\lambda\otimes\psi$ for a unique unramified
character~$\psi$.  All eigensystems are unramified quadratic twists of
eigensystems with trivial character.

Concrete examples for this situation may be found in Lingham's 2005
thesis~\cite{LinghamThesis} for the fields~$\Q(\sqrt{-23})$
and~$\Q(\sqrt{-31})$, both of class number~$3$.  There, the interest
was in eigenvalue systems with trivial character, where~$T_{\a,\a}$
act trivially, and the eigenvalue systems are uniquely determined by
their principal eigenvalues.  To compute the eigenvalue of~$T_\p$ in
general in this case, one can choose~$\a$ coprime to~$\n$ such that
$\a^2\p$ is principal and compute the operator~$T_{\a,\a}T_{\p}$,
which is principal, and can be described using a finite set of
$N(\p)+1$ matrices in~$\Mat_2(K)$; then
$\lambda(\p)=\lambda(T_{\a,\a}T_{\p})$.  See the examples at the end
of~\Cref{sec:hecke-matrices}.  Many more examples for imaginary
quadratic fields with odd class numbers up to~$63$ have since been
computed by the author, and may be found in the LMFDB~\cite{lmfdb}.

\subsubsection{Special case: elementary abelian $2$-group}   In this case
$\Cl(K)^2$ is trivial, and every square class is principal.  Hence the
restriction of~$\lambda$ to~$\T_1$ completely determines its
character, and determines~$\lambda$ itself up to an unramified
quadratic twist.  Each restricted eigenvalue system extends to (in
general)~$h$ complete eigenvalue systems, forming a set of~$h$
unramified quadratic twists; in the presence of inner twists the
number is a proper divisor of~$h$, and the eigenvalues for all
operators in some classes are all zero.

Examples for this situation appear in Bygott's 1998
thesis~\cite{JBthesis} for~$\Q(\sqrt{-5})$, of class number~$2$.
There, eigenvalues systems with any unramified character were
considered, that is, both those with trivial character (called
``plusforms'' in~\cite{JBthesis}) and those with the unique unramified
quadratic character (called ``minusforms'').  In general the
eigenvalue systems come in pairs which are unramified quadratic twists
of each other and have the same character, though there are examples
of inner twist.  For example, at level~$\n=\<8>$ there are two
eigenvalue systems denoted~$f_{15}$ and~$f_{16}$ in~\cite{JBthesis},
each of which is its own quadratic twist, so has all eigenvalues for
non-principal Hecke operators equal to~$0$; of these, $f_{16}$ has
trivial character, while~$f_{15}$ has nontrivial character.

Although we have not yet considered the question of rationality of the
eigenvalues, it is interesting to note several cases
in~\cite{JBthesis} where the eigenvalues of principal operators are
rational integers while those of non-principal operators lie in a
quadratic field.  Both real and imaginary quadratic fields can occur,
but only real fields for forms with trivial character, since the
principal Hecke operators are all Hermitian in the context
of~\cite{JBthesis} where the class number is~$2$.

Many more examples for imaginary quadratic fields with class group of
exponent~$2$ (with class number up to~$8$) have since been computed by
the author, and may be found in the LMFDB~\cite{lmfdb}.

\subsection{Eigenvectors}
Let~$V=\C\M$, where either~$\M=\M_1(\n)$ or~$\M=\M_0(\n)$.  Then $V$
is a finite direct sum~$V=\oplus_c\C\M^{(c)}$, and the action of~$\T$
on~$V$ respects this decomposition by \cref{eqn:T-M-action}.
Let~$v\in V$ be a simultaneous eigenvector for all~$T\in\T$, with
eigenvalue system~$\lambda$, and write~$v=\sum_cv_c$ with~$v\in V_c$.
We now show that each component~$v_c$ is nonzero, except in the case
where the eigenvalue system has (nontrivial) inner twists, or
equivalently, when the the subgroup~$H_0(\lambda)$ of the class group
defined above is a proper subgroup.

\begin{lem}
  \label{lem:eigenvector-components}
  Let~$v=\sum_cv_c$ be a simultaneous eigenvector for~$\T$ with
  eigensystem~$\lambda$.  For all classes~$c_0,c$, if~$T\in\T_{c_0}$
  then
  \[
  T(v_{c}) = \lambda(T)v_{c_0c}.
  \]
\end{lem}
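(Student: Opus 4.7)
The plan is to exploit the grading on both sides of the eigenvalue equation $T(v)=\lambda(T)v$ and then match components. Since $\M$ is the disjoint union of the $\M^{(c)}$, the space $V$ is an honest internal direct sum $V=\bigoplus_c V_c$ with $V_c=\C\M^{(c)}$, so each vector has a unique decomposition into its graded pieces, and it suffices to compare the components of $T(v)$ and $\lambda(T)v$ in each $V_{c'}$.

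First I would observe that since $T\in\T_{c_0}$, the characterisation in~\cref{eqn:T-M-action} gives $T(V_c)\subseteq V_{c_0c}$ for every class~$c$. Therefore $T(v)=\sum_c T(v_c)$ is the decomposition of $T(v)$ into its graded components (indexed so that the piece in $V_{c_0c}$ is precisely $T(v_c)$), because the classes $c_0c$ are distinct as $c$ varies over~$\Cl(K)$.

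On the other hand, since $v$ is a simultaneous eigenvector, $T(v)=\lambda(T)v=\sum_c \lambda(T)v_c$, whose component in $V_{c_0c}$ is $\lambda(T)v_{c_0c}$. Matching the $V_{c_0c}$-components of the two expressions for $T(v)$ yields $T(v_c)=\lambda(T)v_{c_0c}$, as required.

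There is essentially no obstacle: the proof is a formal consequence of the grading~\eqref{eqn:T-decomp} together with \cref{eqn:T-M-action}. The only point worth flagging is the bookkeeping needed to ensure that the $V_{c_0c}$-components are being compared correctly; this is automatic because multiplication by $c_0$ is a bijection on $\Cl(K)$.
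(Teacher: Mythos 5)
Your proof is correct and follows exactly the same route as the paper's: write out the eigenvalue equation $T(v)=\lambda(T)v$, use the grading $T(V_c)\subseteq V_{c_0c}$ from \eqref{eqn:T-M-action}, and compare $V_{c_0c}$-components on both sides. The only difference is that you spell out the bookkeeping more explicitly than the paper does.
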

\begin{proof}
For all~$T\in\T$ we have
\[
\lambda(T)\sum_cv_c = \lambda(T)v = T(v) = \sum_cT(v_c);
\]
taking~$T\in\T_{c_0}$ and comparing the $c_0c$-components gives the
result.
\end{proof}

\begin{prop}
  \label{prop:eigenvector-components}
  Let~$v=\sum_cv_c$ be a simultaneous eigenvector for~$\T$ with
  eigensystem~$\lambda$.  If one component of~$v$ is zero,
  say~$v_{c_1}=0$, then $v_c=0$ for all classes~$c$ in the
  coset~$H_0(\lambda)c_1$.
\end{prop}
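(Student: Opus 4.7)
The plan is to apply the preceding \Cref{lem:eigenvector-components} directly, choosing Hecke operators in the appropriate graded component $\T_{c_0}$ on which $\lambda$ is known to be nonzero.

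More precisely, fix a class $c \in H_0(\lambda)c_1$ and write $c = c_0 c_1$ with $c_0 \in H_0(\lambda)$. By definition of $H_0(\lambda)$, the restriction of $\lambda$ to $\T_{c_0}$ is not identically zero, so there exists some $T \in \T_{c_0}$ with $\lambda(T) \neq 0$. Applying \Cref{lem:eigenvector-components} to this $T$ and the component $v_{c_1}$ gives
\[
T(v_{c_1}) = \lambda(T)\, v_{c_0 c_1} = \lambda(T)\, v_c.
\]
The left-hand side is zero by hypothesis $v_{c_1} = 0$, and $\lambda(T) \neq 0$, so $v_c = 0$.

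The only real point requiring attention is that $H_0(\lambda)$ is indeed a subgroup (ensuring the coset $H_0(\lambda)c_1$ is well-defined), which has already been established in the proof of \Cref{thm:eig-systems}: it contains $\Cl(K)^2$, is closed under multiplication by multiplicativity of $\lambda$, and since $\Cl(K)$ is finite, closure under multiplication gives a subgroup. So there is no real obstacle; the proposition is a direct corollary of the lemma combined with the definition of $H_0(\lambda)$.
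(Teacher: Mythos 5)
Your proof is correct and matches the paper's argument essentially verbatim: both pick, for each $c_0\in H_0(\lambda)$, an operator $T\in\T_{c_0}$ with $\lambda(T)\ne 0$, apply \Cref{lem:eigenvector-components} to get $\lambda(T)v_{c_0c_1}=T(v_{c_1})=0$, and divide by $\lambda(T)$. Your closing remark that $H_0(\lambda)$ is a subgroup (so the coset is well-defined) is a reasonable clarification; the paper establishes this fact in the proof of \Cref{thm:eig-systems} and takes it for granted here.
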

\begin{proof}
  For all classes~$c_0\in H_0$ there is (by definition) an
  operator~$T$ of class~$c_0$ with~$\lambda(T)\not=0$.
  Now~\Cref{lem:eigenvector-components} implies that~$v_{c_0c}=0$
  also.
\end{proof}

If~$\lambda$ has no inner twists, and in particular when the class
number is odd, $H_0(\lambda)$ is the whole class group, giving the
following.
\begin{cor}
  \label{cor:eigenvector-components-nonzero}
  Let~$v=\sum_cv_c$ be a simultaneous eigenvector for~$\T$.  If the
  eigensystem~$\lambda$ has no inner twists, then every
  component~$v_c$ of~$v$ is nonzero.

  In particular, this is always the case when the class number is odd.
\end{cor}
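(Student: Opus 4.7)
The plan is to deduce this corollary directly from \Cref{prop:eigenvector-components} by contrapositive, using the fact that the condition of having no inner twists is exactly the statement that $H_0(\lambda)=\Cl(K)$.

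First I would recall that eigenvectors are by convention nonzero, so $v\neq 0$, meaning at least one component $v_{c_0}$ is nonzero. Suppose for contradiction that some component $v_{c_1}$ of $v$ vanishes. Applying \Cref{prop:eigenvector-components} to this vanishing component gives $v_c = 0$ for every $c$ in the coset $H_0(\lambda)c_1$. If $H_0(\lambda) = \Cl(K)$, this coset is the entire class group, so $v_c = 0$ for every $c\in\Cl(K)$, yielding $v = \sum_c v_c = 0$, a contradiction.

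It therefore remains to show that the hypothesis ``$\lambda$ has no inner twists'' means exactly that $H_0(\lambda) = \Cl(K)$, and that this holds automatically in the odd class number case. The first is essentially the definition given in the discussion following \Cref{thm:eig-systems}: the inner twists of $\lambda$ are precisely the nontrivial unramified quadratic characters trivial on $H_0(\lambda)$, and such characters exist if and only if $H_0(\lambda)$ is a proper subgroup of $\Cl(K)$. For the odd class number case, I would invoke the containment $\Cl(K)^2 \subseteq H_0(\lambda) \subseteq \Cl(K)$ established just before \Cref{thm:eig-systems}; when $|\Cl(K)|$ is odd, squaring is a bijection on $\Cl(K)$, so $\Cl(K)^2 = \Cl(K)$, forcing $H_0(\lambda) = \Cl(K)$.

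There is no real obstacle here: the substantive work has already been carried out in \Cref{prop:eigenvector-components} (and in turn in \Cref{lem:eigenvector-components}), together with the discussion of $H_0(\lambda)$ in the proof of \Cref{thm:eig-systems}. The only mild point to take care of is making the equivalence ``no inner twists $\iff H_0(\lambda)=\Cl(K)$'' explicit, since it has been defined somewhat informally in the surrounding text rather than packaged as a standalone lemma.
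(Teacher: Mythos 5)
Your proposal is correct and follows essentially the same route as the paper: deducing the corollary from \Cref{prop:eigenvector-components} by noting that the absence of inner twists forces $H_0(\lambda)=\Cl(K)$, with the odd class number case handled via $\Cl(K)^2=\Cl(K)$. You spell out the definitional equivalence between ``no inner twists'' and $H_0(\lambda)=\Cl(K)$ a little more explicitly than the paper does, but that is just unpacking what the paper asserts in the sentence immediately preceding the corollary.
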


When there is just one inner twist (which in practice is the only
other case, see \Cref{rmk:only-one-inner-twist}), the
index~$[\Cl(K):H_0]=2$.  Then we can either have~$v_c\not=0$ for
all~$c\in H_0$ and~$v_c=0$ for all~$c\not\in H_0$, or vice versa.

Let $v=\sum_cv_c\in V$ and let~$\psi$ be an unramified character,
viewed as a character of~$\Cl(K)$.  We define the \emph{twist of $v$ by
$\psi$} to be
\begin{equation}
  \label{eqn:eigenvector-twist}
v \otimes \psi = \sum_c\psi^{-1}(c)v_c,
\end{equation}
where each $c$-component has been multiplied by~$\psi^{-1}(c)$.

\begin{prop}
  \label{prop:eigenvector-twist}
  Let~$v=\sum_cv_c$ be a simultaneous eigenvector for~$\T$ with
  eigensystem~$\lambda$, and let $\psi$ be an unramified
  character. Then $v\otimes\psi$ is also a simultaneous eigenvector,
  with eigensystem $\lambda\otimes\psi$.
\end{prop}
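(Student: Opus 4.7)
The plan is to reduce to the graded components of $\T$ and apply \Cref{lem:eigenvector-components}. First I would note that since $\T = \bigoplus_c \T_c$ and both $T \mapsto T(v\otimes\psi)$ and $T \mapsto (\lambda\otimes\psi)(T)\cdot(v\otimes\psi)$ are $\Q$-linear in $T$, it suffices to verify the eigenvector equation for homogeneous operators $T \in \T_{c_0}$, for arbitrary $c_0 \in \Cl(K)$.

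For such a $T$, using the definition \cref{eqn:eigenvector-twist} and \Cref{lem:eigenvector-components}, I would compute
\[
T(v\otimes\psi) = \sum_c \psi(c)^{-1}T(v_c) = \lambda(T)\sum_c \psi(c)^{-1}v_{c_0c}.
\]
Then I would reindex with $c' = c_0c$, using that $\psi$ is a group homomorphism to get $\psi(c)^{-1} = \psi(c_0)\psi(c')^{-1}$, yielding
\[
T(v\otimes\psi) = \lambda(T)\psi(c_0)\sum_{c'}\psi(c')^{-1}v_{c'} = \lambda(T)\psi(c_0)\,(v\otimes\psi).
\]

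It remains to identify the scalar $\lambda(T)\psi(c_0)$ with $(\lambda\otimes\psi)(T)$. For the basic homogeneous generators this is immediate from the definition of the twist: $T_\a \in \T_{[\a]}$ and $(\lambda\otimes\psi)(T_\a) = \psi(\a)\alpha(\a) = \psi([\a])\lambda(T_\a)$, while $T_{\a,\a} \in \T_{[\a]^2}$ and $(\lambda\otimes\psi)(T_{\a,\a}) = \psi(\a)^2\chi(\a) = \psi([\a]^2)\lambda(T_{\a,\a})$. Since both $\lambda\otimes\psi$ and $\lambda$ are ring homomorphisms and $\psi$ is multiplicative on classes, the relation $(\lambda\otimes\psi)(T) = \psi(c_0)\lambda(T)$ extends by multiplicativity to all monomials in the generators lying in $\T_{c_0}$, and then by linearity to the whole graded component $\T_{c_0}$.

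The only mildly delicate points are confirming that $\lambda\otimes\psi$ is genuinely a ring homomorphism $\T \to \C$ (which amounts to checking that the multiplicative relations (1)--(6) after \Cref{prop:Hecke-relations} are preserved under the substitution $\alpha \mapsto \alpha\psi$, $\chi \mapsto \chi\psi^2$; relation~(6) is the only non-trivial one, and the factor $\psi(\p)^{n+1}$ cancels uniformly from both sides) and keeping the reindexing consistent. Neither presents a real obstacle; the whole argument is essentially bookkeeping once \Cref{lem:eigenvector-components} is in hand.
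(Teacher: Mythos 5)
Your proof is correct and follows essentially the same route as the paper's: reduce to homogeneous $T\in\T_{c_0}$, apply \Cref{lem:eigenvector-components}, and reindex the sum using that $\psi$ is a group homomorphism. You are somewhat more explicit than the paper in spelling out why $(\lambda\otimes\psi)(T)=\psi(c_0)\lambda(T)$ for $T\in\T_{c_0}$ and in noting that $\lambda\otimes\psi$ is a well-defined eigensystem, but those points are implicit in the paper's earlier definition of the twist, so the substance is the same.
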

\begin{proof}
Write $w=v\otimes\psi=\sum_c\psi^{-1}(c)v_c$.  Let $T\in\T_{c_0}$.
Then, using \Cref{lem:eigenvector-components}, we have
\begin{align*}
T(w) = \sum_c\psi^{-1}(c)T(v_c) &= \sum_c\psi^{-1}(c)\lambda(T)v_{c_0c} \\ &=
\sum_c\psi(c_0)\lambda(T)\psi^{-1}(c_0c)v_{c_0c}\\ &=
\psi(c_0)\lambda(T)w.
\end{align*}
Hence $w$ is an eigenvector for all~$T\in\T$, with eigensystem
$\lambda\otimes\psi=(\alpha\psi,\chi\psi^2)$, where
$\lambda=(\alpha,\chi)$.
\end{proof}

\section{Formal modular forms}
\label{sec:formal-modular-forms}
So far, we have defined operators on modular points in~$\M_0(\n)$
or~$\M_1(\n)$, mapping to~$\Q\M_0(\n)$ or~$\Q\M_1(\n)$, or, in the
case of the level-changing operators~$A_{\d}$, from~$\Q\M_0(\n)$
to~$\Q\M_0(\m)$ for some lower level~$\m\mid\n$.  We now consider a
dual perspective: automorphic forms for~$K$ of level~$\Go(\n)$
or~$\Gi(\n)$ may be defined as functions on~$\M_0(\n)$ or~$\M_1(\n)$,
with values in a complex vector space~$V$, satisfying certain
algebraic and analytic conditions.  We do not go into details
concerning these conditions here, except to say that the weight of the
automorphic forms is encoded in an irreducible representation of the
maximal compact subgroup of the infinite component~$\GL(2,K\otimes\R)$
of the adelic space~$\GL(2,\A_K)$, and the possible analytic
conditions include harmonicity at real places and holomorphicity at
complex places.  When $K=\Q$, or when~$K$ is totally real, $V$ is
one-dimensional and the weight is an integer~$k$ (for~$K=\Q$) or a
vector of~$d=[K:\Q]$ integers in general, and one is led to the
definition of classical or Hilbert modular forms on the upper
half-plane~$\Hyp_2$ or the product~$\Hyp_2^d$.  For fields with
complex places, such as imaginary quadratic fields, one must allow $V$
to have larger dimension, essentially because~$\SU(2,\C)$ is not
abelian and has irreducible representations of every dimension, unlike
the abelian group~$\SO(2,\R)$, and one is led to functions defined
on~$\Hyp=\Hyp_2^{r_1}\times\Hyp_3^{r_2}$, the product of $r_1$ copies
of the upper half-plane~$\Hyp_2$ and $r_2$ copies of its
three-dimensional analogue the upper half-space~$\Hyp_3$. For further
details, we refer to Weil's book~\cite{Weil-SLN189} and (with an
emphasis on the imaginary quadratic case) Bygott's thesis
\cite{JBthesis}.

Here, we consider the space~$\SS_V(\M)$ of \emph{all}
functions~$F\colon\M\to V$, where~$\M=\M_1(\n)$ or~$\M_0(\n)$, and~$V$
is some fixed complex vector space.  By linearity we may then
extend~$F$ to a function~$\C\M\to V$.  We call such a function a
\emph{formal modular form} over~$K$ of level~$\n$.  The
space~$\SS_V(\M)$ is infinite-dimensional (unless~$V$ is trivial),
while the subspaces cut out by the standard definitions of automorphic
forms are finite-dimensional; for our purposes, this distinction does
not matter.

We can define an action of the Hecke algebra~$\T$ on~$\SS_V(\M)$ by
setting
\[
    F\left|T\right. = F\circ T
\]
for $T\in\T$.  Explicitly,
\[
  F\left|T_\a\right.((L,L',\beta)) = N(\a)^{-1}
  \sum_{\substack{[M:L]=\a\\ (M,M',\beta)\in\M}}F((M,M',\beta)),
\]
and for~$\a$ coprime to~$\n$,
\[
  F\left|T_{\a,\a}\right.((L,L',\beta)) =
  N(\a)^{-2}F((\a^{-1}L,\a^{-1}L',\beta)).
\]

For $\alpha\in \OK$ coprime to~$\n$,
\[
  F\left|[\alpha]\right.((L,L',\beta)) = F((L,L',\alpha\beta)).
\]
The operators~$[\alpha]$ give an action of the finite abelian
group~$(\OK/\n)^\times$ on~$\SS_V(\M_1(\n))$, which therefore
decomposes as a direct sum
\[
     \SS_{V}(\M_1(\n)) = \bigoplus_{\chi}\SS_{V}(\n,\chi),
\]
where $\chi$ runs over the group of Dirichlet characters modulo~$\n$,
{\it i.e.}, homomorphisms~$(\OK/\n)^\times\to\C^\times$, and
\[
    \SS_{V}(\n,\chi) = \left\{ F\in\SS_{V}(\M_1(\n)):
    F\left|[\alpha]\right. = \chi(\alpha)F\right\}.
\]
Since each~$[\alpha]$ commutes with the operators in~$\T$ it is again
immediate that~$\T$ acts on each subspace~$\SS_{V}(\n,\chi)$.  If
$\chi_0$ denotes the principal character modulo~$\n$ then
$\SS_{V}(\n,\chi_0)$ may be identified with the
space~$\SS_V(\M_0(\n))$, which we denote simple~$\SS_V(\n)$.

\subsection{Eigenfunction decomposition}

We call a nonzero function~$F\in\SS_V(\M)$ an \emph{eigenfunction}
with eigensystem~$\lambda$ if $F\left|T\right.=\lambda(T)F$ for
all~$T\in\T$.  If also $F\left|W_{\q}\right.=\varepsilon(\q)F$ (for
some scalar~$\varepsilon(\q)$) for all~$\q\mid\mid\n$, we say that~$F$
is a \emph{strong eigenfunction}.  Note that, since
$W_{\q}^2=T_{\q,\q}$, we have~$\varepsilon(\q)^2=\chi(\q)$,
where~$\chi$ is the character of~$\lambda$, so that every~$F$ which is
an eigenfunction for all~$T\in\T$ is automatically an eigenfunction
for~$W_{\q}^2$, but not \textit{a priori} of~$W_{\q}$ itself.
However, if the space of functions with the same eigensystem~$\lambda$
as~$F$ is $1$-dimensional, then, since each~$W_{\q}$ commutes with all
the operators in~$\T$, it follows by elementary linear algebra
that~$F$ is also an eigenfunction for all~$W_{\q}$.

Each~$F\in\SS_V(\M)$ consists of a collection~$(F_c)_{c\in\Cl(K)}$ of
functions~$F_c:\M^{(c)}\to V$, one for each ideal class~$c$, so we may
write~$\SS_V(\M) = \oplus_c\SS_V(\M^{(c)})$.  The Hecke action
respects this decomposition, in the sense that~$T\in\T_{c_1}$ maps
$\SS_V(\M^{(c_2)}) \to \SS_V(\M^{(c_1^{-1}c_2)})$.

For~$F=(F_c)_c\in\SS_V(\M)$ and~$\psi$ an unramified character,
we define the \emph{twist of $F$ by $\psi$} to be~$F\otimes\psi$, with
components given by
\begin{equation}
  \label{eqn:eigenfunction-twist}
(F \otimes \psi)_c = \psi(c)F_c.
\end{equation}
Here the $c$-component is multiplied by~$\psi(c)$, unlike
in~(\ref{eqn:eigenvector-twist}), where~$\psi$ was inverted.

The analogues for eigenfunctions of~\Cref{lem:eigenvector-components},
\Cref{prop:eigenvector-components},
\Cref{cor:eigenvector-components-nonzero},
and~\Cref{prop:eigenvector-twist} are proved in the same way, taking
into account the difference between~(\ref{eqn:eigenvector-twist})
and~(\ref{eqn:eigenfunction-twist}).

\begin{lem}
  \label{lem:eigenfunction-components}
  Let~$F=(F_c)_c\in\SS_V(\M)$ be an eigenfunction with eigensystem~$\lambda$.
  For all classes~$c_0,c$, if~$T\in\T_{c_0}$ then
  \[
  F_c\left|T\right. = \lambda(T)F_{c_0^{-1}c}.
  \]
\end{lem}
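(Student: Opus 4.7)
The plan is to mirror the proof of \Cref{lem:eigenvector-components}, taking into account that the action of~$\T$ on $\SS_V(\M)$ is dual to its action on modular points: $F|T$ is by definition $F\circ T$. Since $F$ is an eigenfunction, the global identity $F|T = \lambda(T)F$ holds in $\SS_V(\M)$, so the task reduces to unpacking both sides on a fixed graded piece and reading off the component identity.

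The key step is the following pointwise computation. Fix $T\in\T_{c_0}$ and a class~$c'$, and evaluate at an arbitrary $P\in\M^{(c')}$. By the grading~\eqref{eqn:T-M-action}, $T(P)\in\Q\M^{(c_0c')}$, so when one evaluates $F=\sum_c F_c$ at $T(P)$ only the summand indexed by $c_0c'$ survives:
\[
(F|T)(P) = F\bigl(T(P)\bigr) = F_{c_0c'}\bigl(T(P)\bigr) = (F_{c_0c'}|T)(P).
\]
Comparing this with $(F|T)(P)=\lambda(T)F(P)=\lambda(T)F_{c'}(P)$ yields $(F_{c_0c'}|T)(P) = \lambda(T)F_{c'}(P)$ for all $P\in\M^{(c')}$. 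Reindexing via $c=c_0c'$ (so that $c'=c_0^{-1}c$) gives the stated identity on $\M^{(c_0^{-1}c)}$.

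To finish, I would check that both sides of $F_c|T = \lambda(T)F_{c_0^{-1}c}$ vanish on every $\M^{(c'')}$ with $c''\ne c_0^{-1}c$: the right-hand side because $F_{c_0^{-1}c}$ is by definition supported on $\M^{(c_0^{-1}c)}$, and the left-hand side because $T$ sends such a $P$ into $\Q\M^{(c_0c'')}$, which is supported away from $F_c$. There is no genuine obstacle; the only point deserving care is the appearance of $c_0^{-1}$ rather than $c_0$ on the right-hand side, which is precisely the formal reflection of the contravariant way the Hecke operators act on functions as compared to their covariant action on modular points.
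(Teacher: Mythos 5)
Your proposal is correct and follows essentially the approach the paper intends: the paper simply states that this lemma is proved ``in the same way'' as \Cref{lem:eigenvector-components}, namely by expanding $F|T=\lambda(T)F$ using the grading $\SS_V(\M^{(c_2)}) \to \SS_V(\M^{(c_0^{-1}c_2)})$ for $T\in\T_{c_0}$ and comparing graded components, which is precisely what your pointwise computation accomplishes.
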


\begin{prop}
  \label{prop:eigenfunction-components}
  Let~$F=(F_c)_c\in\SS_V(\M)$ be an eigenfunction with
  eigensystem~$\lambda$.  If one component of~$F$ is zero,
  say~$F_{c_1}=0$, then $F_c=0$ for all classes~$c$ in the
  coset~$H_0(\lambda)c_1$.
\end{prop}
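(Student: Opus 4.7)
The plan is to mimic the proof of \Cref{prop:eigenvector-components} as closely as possible, with the only subtlety being that the class-bookkeeping in the functional setting involves an inverse (because $F|T$ pulls back along $T$, whereas $T$ pushes forward on vectors). The key inputs are \Cref{lem:eigenfunction-components} and the definition of $H_0 = H_0(\lambda)$ as the subgroup of classes $c_0$ for which $\lambda$ is not identically zero on $\T_{c_0}$.

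I would begin by fixing an arbitrary class $c$ in the coset $H_0 c_1$ and writing $c = c_0^{-1}c_1$ for some $c_0 \in H_0$; this is possible because $H_0$ is a subgroup, so the map $c_0 \mapsto c_0^{-1}c_1$ is a bijection from $H_0$ onto $H_0 c_1$. By definition of $H_0$, there exists an operator $T \in \T_{c_0}$ with $\lambda(T) \neq 0$.

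Then I would apply \Cref{lem:eigenfunction-components} with this $T$ and with the class $c_1$, yielding
\[
F_{c_1}\bigl|T = \lambda(T)\, F_{c_0^{-1}c_1} = \lambda(T)\, F_{c}.
\]
Since $F_{c_1} = 0$ by hypothesis, the left-hand side is the zero function on $\M^{(c_0^{-1}c_1)} = \M^{(c)}$. Dividing through by the nonzero scalar $\lambda(T)$ gives $F_c = 0$, completing the argument.

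I do not expect any serious obstacle here: the whole proof is a direct transcription of the eigenvector case, and the lemma does all the real work. The one point to be careful about is remembering that the indexing of components on the functional side is twisted by an inverse relative to the vector side (as reflected in the different twist conventions~(\ref{eqn:eigenvector-twist}) and~(\ref{eqn:eigenfunction-twist})); writing $c = c_0^{-1}c_1$ rather than $c = c_0 c_1$ is what makes the algebra come out right.
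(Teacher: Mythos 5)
Your proof is correct and matches exactly what the paper intends: the paper explicitly states that \Cref{prop:eigenfunction-components} is ``proved in the same way'' as \Cref{prop:eigenvector-components}, modulo the inverse twist in the indexing. You have correctly identified that the bookkeeping change is to write $c = c_0^{-1}c_1$ rather than $c = c_0 c_1$ (harmless since $H_0$ is a subgroup), and the rest follows directly from \Cref{lem:eigenfunction-components} and the nonvanishing of $\lambda$ on $\T_{c_0}$.
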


\begin{cor}
  \label{cor:eigenfuction-components-nonzero}
  Let~$F=(F_c)_c\in\SS_V(\M)$ be an eigenfunction with eigensystem~$\lambda$,
  and suppose that~$\lambda$ has no inner twist.  Then every
  component~$F_c$ of~$F$ is nonzero.

  In particular, this is always the case when the class number is odd.
\end{cor}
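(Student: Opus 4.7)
The plan is to deduce this corollary directly from \Cref{prop:eigenfunction-components}, using the characterization of inner twists in terms of the subgroup $H_0(\lambda)\le\Cl(K)$ that was set up in the proof of \Cref{thm:eig-systems}. First I would recall that $H_0(\lambda)$ was defined as the set of classes $c\in\Cl(K)$ for which $\lambda$ does not vanish identically on $\T_c$, and that it is a subgroup containing $\Cl(K)^2$. The analysis at the end of the proof of \Cref{thm:eig-systems} shows that $\lambda$ has a nontrivial inner twist precisely when $H_0(\lambda)$ is a proper subgroup of $\Cl(K)$; equivalently, ``$\lambda$ has no inner twist'' is the same as $H_0(\lambda)=\Cl(K)$.

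Next I would argue by contradiction. Suppose $F$ is a nonzero eigenfunction with eigensystem $\lambda$ such that some component $F_{c_1}$ vanishes. Applying \Cref{prop:eigenfunction-components} gives $F_c=0$ for every $c$ in the coset $H_0(\lambda)c_1$. Under the hypothesis that $\lambda$ has no inner twist, this coset equals $\Cl(K)$, so every component of $F$ is zero, making $F$ the zero function. This contradicts the standing requirement that an eigenfunction be nonzero, so no component can vanish.

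For the final sentence, if $h=\#\Cl(K)$ is odd then squaring is a bijection on $\Cl(K)$, hence $\Cl(K)^2=\Cl(K)$. Combined with the chain $\Cl(K)^2\subseteq H_0(\lambda)\subseteq\Cl(K)$, this forces $H_0(\lambda)=\Cl(K)$, so the no-inner-twist hypothesis is automatic and the previous paragraph applies.

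The main obstacle is really just bookkeeping rather than mathematical content: one has to be sure that the translation from the eigenvector setting (\Cref{cor:eigenvector-components-nonzero}) to the eigenfunction setting is genuinely routine. The only place where care is needed is that operators in $\T_{c_0}$ act on eigenfunction components with an inverse shift, $F_c\mapsto \lambda(T)F_{c_0^{-1}c}$, rather than the $v_c\mapsto \lambda(T)v_{c_0c}$ that appears for vectors. However, the orbits of $H_0(\lambda)$ acting on $\Cl(K)$ by either multiplication or inverse-multiplication are the same cosets, so the implication ``one component zero $\Rightarrow$ all components in a coset of $H_0(\lambda)$ are zero'' transfers without any change, and the argument above goes through verbatim.
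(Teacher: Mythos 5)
Your proposal is correct and matches the paper's (deliberately terse) proof: the paper simply says the eigenfunction analogues are proved ``in the same way'' as the eigenvector versions, and there the argument is exactly that ``$\lambda$ has no inner twist'' is equivalent to $H_0(\lambda)=\Cl(K)$, so \Cref{prop:eigenfunction-components} forces all components to vanish whenever one does, contradicting $F\neq0$; the odd-class-number case follows from $\Cl(K)^2\subseteq H_0(\lambda)\subseteq\Cl(K)$. Your closing remark about the inverse shift $F_c\mapsto\lambda(T)F_{c_0^{-1}c}$ versus $v_c\mapsto\lambda(T)v_{c_0c}$ correctly identifies the one point of bookkeeping the paper flags, and your observation that cosets of the subgroup $H_0(\lambda)$ are unchanged under inversion resolves it.
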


\begin{prop}
  \label{prop:eigenfunction-twist}
  Let~$F=(F_c)_c\in\SS_V(\M)$ be an eigenfunction with eigensystem~$\lambda$.
  Let $\psi$ be an unramified character. Then $F\otimes\psi$ is also an
  eigenfunction, with eigensystem $\lambda\otimes\psi$.
\end{prop}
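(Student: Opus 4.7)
The plan is to imitate the proof of~\Cref{prop:eigenvector-twist}, adjusted for two sign-like changes on the function side that cancel one another: by~\Cref{lem:eigenfunction-components}, an operator $T\in\T_{c_0}$ shifts the grading in the inverse direction ($F_c|T = \lambda(T)F_{c_0^{-1}c}$), while the twist on functions is defined multiplicatively in~$\psi$ as $(F\otimes\psi)_c = \psi(c)F_c$ rather than via $\psi^{-1}$ as it is for vectors in~(\ref{eqn:eigenvector-twist}). These two reversals are exactly what is needed to make the computation close up.

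I would set $G = F\otimes\psi$, fix $T\in\T_{c_0}$, and check the eigenfunction relation one graded component at a time; by $\Q$-linearity and the decomposition $\T = \bigoplus_c \T_c$ from~\Cref{prop:T-grading}, this suffices. On the $c$-component, applying~\Cref{lem:eigenfunction-components} to $F$ and then multiplying through by $\psi(c) = \psi(c_0)\,\psi(c_0^{-1}c)$ turns the right-hand factor $F_{c_0^{-1}c}$ into $G_{c_0^{-1}c}$ and leaves the scalar $\psi(c_0)\lambda(T)$ in front. Summing the component identities gives $G|T = \psi(c_0)\lambda(T)\,G$.

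To finish, I would check that $\psi(c_0)\lambda(T) = (\lambda\otimes\psi)(T)$ for every $T\in\T_{c_0}$. On the algebra generators this is immediate from the definition of the twist: $(\lambda\otimes\psi)(T_\a) = \psi(\a)\alpha(\a)$, where $\a$ has class $c_0 = [\a]$, and similarly $(\lambda\otimes\psi)(T_{\a,\a}) = \psi(\a)^2\chi(\a)$ with $c_0 = [\a]^2$. The identity then extends to all of~$\T_{c_0}$ because both $\lambda$ and $\lambda\otimes\psi$ are algebra homomorphisms and $\psi$ is multiplicative on class-group elements. Hence $G$ is a simultaneous eigenfunction with eigensystem $\lambda\otimes\psi$. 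There is no substantive obstacle here; the only point requiring care is the sign-juggling in the opening paragraph, since using the vector twist convention (with $\psi^{-1}$) would instead yield the factor $\psi(c_0)^{-1}\lambda(T)$ and the argument would collapse.
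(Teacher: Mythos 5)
Your proof is correct and matches what the paper intends: the paper only remarks that the eigenfunction analogues are ``proved in the same way'' as \Cref{prop:eigenvector-twist} after adjusting for the two conventions, and your computation carries out exactly that adjustment, with the $\psi^{-1}\mapsto\psi$ and $c_0c\mapsto c_0^{-1}c$ reversals cancelling to give $G|T=\psi(c_0)\lambda(T)G$ for $T\in\T_{c_0}$. The closing identification $\psi(c_0)\lambda(T)=(\lambda\otimes\psi)(T)$, which you spell out via the basic generators $T_\a$, $T_{\a,\a}$ together with linearity, is the same reconciliation the paper leaves implicit at the end of its eigenvector proof.
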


\subsection{Changing levels of eigensystems}
\label{subsec:level-changing}
Let $\m$ and~$\n$ be two levels (integral ideals) with $\m\mid\n$.
Recall that each ideal divisor $\d\mid \m^{-1}\n$ we have defined an
operator~$A_\d\colon\Q\M_0(\n)$ to $\Q\M_0(\m)$.

We first note~$A_{\d}$ maps eigenvectors at level~$\n$ to eigenvectors
at level~$\m$.

\begin{prop}
  \label{prop:Ad-preserves-eigenvalues}
Suppose that $v\in\C\M_0(\n)$ is a simultaneous eigenvector for all
operators in~$\T$, say $T(v)=\lambda(T)v$.

Let $\m\mid\n$, and~$\d\mid\m^{-1}\n$, and set~$w=A_{\d}(v)
\in\C\M_0(\m)$.  Then for~$T=T_{\a}$, and for~$T=T_{\a,\a}$
with~$\a\in\J_K^{\n}$, $w$ is also an eigenvector for~$T$ with the
same eigenvalue~$\lambda(T)$ as~$v$.
\end{prop}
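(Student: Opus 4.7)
The plan is to reduce the statement directly to the commutativity identities for $A_{\d}$ with the Hecke operators that have already been established: Corollary \ref{cor:Ad-commutes-Ta} (which gives $A_{\d}T_{\a}^{\n} = T_{\a}^{\m}A_{\d}$ for $\a$ an integral ideal coprime to~$\n$) and Proposition \ref{prop:Ad-commutes-Taa} (which gives $A_{\d}T_{\a,\a}^{\n} = T_{\a,\a}^{\m}A_{\d}$ for fractional~$\a$). Both yield identities of linear maps $\Q\M_0(\n)\to\Q\M_0(\m)$, so by pure linearity the eigenvalue property will transfer to $w=A_{\d}(v)$.

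Concretely, let $T^{\n}$ denote either $T_{\a}^{\n}$ or $T_{\a,\a}^{\n}$ with $\a\in\J_K^{\n}$, and $T^{\m}$ the corresponding operator at the lower level. From the hypothesis $T^{\n}(v)=\lambda(T^{\n})\,v$ I would compute
\[
T^{\m}(w) \;=\; T^{\m}(A_{\d}(v)) \;=\; A_{\d}(T^{\n}(v)) \;=\; A_{\d}\bigl(\lambda(T^{\n})\,v\bigr) \;=\; \lambda(T^{\n})\,A_{\d}(v) \;=\; \lambda(T^{\n})\,w,
\]
where the second equality is the commutation relation, and the remaining equalities use $\C$-linearity of $A_{\d}$ together with the eigenvalue hypothesis on $v$. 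This shows $w$ is an eigenvector for $T^{\m}$ with the same scalar $\lambda(T^{\n})$.

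Given the preparatory results, there is essentially no obstacle: once one has $A_{\d}T^{\n}=T^{\m}A_{\d}$, the eigenvalue-preservation statement is a one-line diagram chase. The real content lies upstream, in the case-by-case lattice computations of Proposition \ref{prop:Ad-commutes-Tp}, Proposition \ref{prop:Ad-commutes-Taa}, and Corollary \ref{cor:Ad-commutes-Ta}. I would only remark that the coprimality hypothesis $\a\in\J_K^{\n}$ is essential for invoking the $T_{\a}$ commutation, since for primes $\p\mid\n$ the operators $T_{\p}^{\n}$ and $T_{\p}^{\m}$ involve different conditions on the admissible superlattices and no analogous commutation with $A_{\d}$ is available from what has been proved.
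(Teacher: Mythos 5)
Your proof is correct and matches the paper's argument exactly: both invoke \Cref{prop:Ad-commutes-Taa} and \Cref{cor:Ad-commutes-Ta} and then perform the same one-line computation $T^{\m}(w)=T^{\m}A_{\d}(v)=A_{\d}T^{\n}(v)=\lambda(T^{\n})A_{\d}(v)=\lambda(T^{\n})w$. Your closing remark about the coprimality hypothesis being essential is a sensible observation, though not part of the paper's proof.
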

\begin{proof}
This follows from~\Cref{prop:Ad-commutes-Tp},
\Cref{prop:Ad-commutes-Taa}, and~\Cref{cor:Ad-commutes-Ta}.  For
$T=T_{\a}$ or~$T_{\a,\a}$, we have $T(w) = TA_{\d}(v) = A_{\d}T(v) =
A_{\d}(\lambda(T)v) = \lambda(T)w$.
\end{proof}

Since~$A_{\d}$ maps $\M_0(\n)$ to~$\M_0(\m)$, lowering the level, the
induced dual action on functions raises the level:
\[
F \in \SS_V(\m) \implies F_{\d} := F\left|A_{\d}\right. = F\circ A_{\d} \in \SS_V(\n).
\]
As in~\Cref{prop:Ad-preserves-eigenvalues}, $A_{\d}$ maps eigenforms
to eigenforms with the same eigenvalues for all $T_{\a}$
and~$T_{\a,\a}$ for~$\a\in\J_K^{\n}$:
\begin{prop}
  \label{prop:T-commutes-Ad-function}
  Let $\m\mid\n$, and let~$F\in\SS_V(\m)$ be an eigenfunction, with
  eigensystem~$\lambda$.  For all~$\d\mid\m^{-1}\n$, the function
  $F_{\d}=F\left|A_{\d}\right.$ is an eigenfunction at level~$\n$,
  with the same eigensystem.
\end{prop}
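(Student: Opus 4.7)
The approach is dual to Proposition~\ref{prop:Ad-preserves-eigenvalues}: where that result pushed eigenvectors forward along $A_{\d}\colon\Q\M_0(\n)\to\Q\M_0(\m)$, the present statement pulls eigenfunctions back along the same map. The essential observation is that the action $F\mapsto F|T=F\circ T$ reverses composition, so commutation of $A_{\d}$ with a Hecke operator on modular points automatically produces a commutation on the function side.

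The plan is as follows. First I would unpack the definition: since $F_{\d}=F\circ A_{\d}$, for any operator $T^{\n}$ at level~$\n$,
\[
F_{\d}\bigl|T^{\n}\bigr. \;=\; (F\circ A_{\d})\circ T^{\n} \;=\; F\circ(A_{\d}\circ T^{\n}).
\]
Next, for $T^{\n}=T^{\n}_{\a}$ with $\a$ coprime to~$\n$, Corollary~\ref{cor:Ad-commutes-Ta} gives $A_{\d}\circ T^{\n}_{\a}=T^{\m}_{\a}\circ A_{\d}$; similarly, Proposition~\ref{prop:Ad-commutes-Taa} supplies the analogous identity for $T_{\a,\a}$ with $\a\in\J_K^{\n}$. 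Substituting,
\[
F_{\d}\bigl|T^{\n}\bigr. \;=\; F\circ T^{\m}\circ A_{\d} \;=\; \bigl(F\bigl|T^{\m}\bigr.\bigr)\circ A_{\d} \;=\; \lambda(T^{\m})\,F\circ A_{\d} \;=\; \lambda(T^{\m})\,F_{\d},
\]
where the third equality uses the hypothesis that $F$ is an eigenfunction at level~$\m$ with eigensystem~$\lambda$. Since $T_{\a}$ and $T_{\a,\a}$ (with $\a\in\J_K^{\n}$) generate the subalgebra of $\T^{\n}$ in question, multiplicativity of $\lambda$ extends the identity to all such products, yielding $F_{\d}|T=\lambda(T)F_{\d}$ on this subalgebra.

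I expect no serious obstacle, since all the hard work was already done in Propositions~\ref{prop:Ad-commutes-Tp}, \ref{prop:Ad-commutes-Taa}, and Corollary~\ref{cor:Ad-commutes-Ta}. The only subtle point to flag is that ``the same eigensystem'' must be interpreted with care: the established commutation relations cover $T_{\a}$ and $T_{\a,\a}$ only for $\a$ coprime to~$\n$, so for primes $\p\mid\d$ (which divide $\n$ but not $\m$) the new operator $T^{\n}_{\p}$ has no counterpart at level~$\m$, and its eigenvalue on $F_{\d}$ is not determined by $\lambda$ via this argument. If one also wanted the Atkin-Lehner statement, the analogous argument would invoke Proposition~\ref{prop:Ad-commutes-Wq}, at the cost of the mildly intricate bookkeeping between $W_{\q}^{\n}$ and $W_{\q}^{\m}T^{\m}_{\d',\d'}$ in the case $\p\mid\q$.
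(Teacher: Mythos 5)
Your proposal is correct and matches the route the paper clearly intends: the paper states Proposition~\ref{prop:T-commutes-Ad-function} without a displayed proof, presenting it as the function-side dual of Proposition~\ref{prop:Ad-preserves-eigenvalues} and implicitly relying on the commutation relations of Propositions~\ref{prop:Ad-commutes-Tp}, \ref{prop:Ad-commutes-Taa}, and Corollary~\ref{cor:Ad-commutes-Ta}, which is exactly what you carry out. Your flagged caveat --- that ``same eigensystem'' should be read as agreement on $T_{\a}$ and $T_{\a,\a}$ for $\a\in\J_K^{\n}$, since operators at primes dividing $\n$ are not controlled by these commutation relations --- is also consistent with the explicit qualifier the paper places in the sentence immediately preceding the proposition.
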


The behaviour of eigenfunctions for Atkin-Lehner operators is given by
the following result, where, as in~\Cref{prop:Ad-commutes-Wq}, we
restrict to the case where~$\m^{-1}\n$ is a prime power.

\begin{prop}
  \label{prop:Ad-commutes-Wq-function}
  Let $\n=\p^{\alpha+\beta}\m = \d\d'\m$, and suppose
  that~$F\in\SS_V(\m)$ is a strong eigenfunction with
  eigensystem~$\lambda$ and character~$\chi$, such that
  $F\left|W_{\q}^{\m}\right. = \varepsilon(\q)F$.  Let
  $F_{\d}=f\left|A_{\d}\right.$.  Then
  \begin{enumerate}
  \item If~$\p\nmid\q$ then $\q\mid\mid\n$ also, and
    \[
     F_{\d}\left|W_{\q}^{\n}\right. = \varepsilon(\q)F_{\d}.
    \]
  \item If~$\p\mid\q$ then $\q' = \p^{\alpha+\beta}\q = \d\d'\q
    \mid\mid \n$, and
    \[
     F_{\d}\left|W_{\q'}^{\n}\right. = \varepsilon(\q)\chi(\d')F_{\d'}.
    \]
  \end{enumerate}
\end{prop}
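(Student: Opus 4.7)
The plan is to deduce this proposition directly from its operator-theoretic counterpart~\Cref{prop:Ad-commutes-Wq} by transporting the commutation relations through the right action $F \mapsto F|T = F \circ T$. A small preliminary observation is that this right action is associative in the sense that $(F|T_1)|T_2 = F|(T_1 T_2)$, where $T_1 T_2$ denotes composition of operators on modular points ($T_1$ applied after $T_2$). This lets us commute the $A_{\d}$ and Atkin-Lehner maps past each other on the function side, with the roles of source and target levels swapped compared to the modular-point side.

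For case~(1), I would compute, for any $x \in \M_0(\n)$,
\[
(F_{\d}|W_{\q}^{\n})(x) = F\bigl(A_{\d}(W_{\q}^{\n}(x))\bigr) = F\bigl((A_{\d}W_{\q}^{\n})(x)\bigr),
\]
then substitute the first identity of~\Cref{prop:Ad-commutes-Wq}, namely $A_{\d}W_{\q}^{\n} = W_{\q}^{\m}A_{\d}$, to rewrite this as $(F|W_{\q}^{\m})(A_{\d}(x))$. The strong-eigenfunction hypothesis $F|W_{\q}^{\m} = \varepsilon(\q)F$ then collapses this to $\varepsilon(\q)F_{\d}(x)$, proving the identity. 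Note that in this case $\q \mid\mid \m$ and $\p \nmid \q$ together do force $\q \mid\mid \n$, so $W_{\q}^{\n}$ is indeed defined.

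For case~(2), the same template applies, using instead the second identity of~\Cref{prop:Ad-commutes-Wq},
\[
A_{\d}W_{\q'}^{\n} = W_{\q}^{\m}T_{\d',\d'}^{\m}A_{\d'},
\]
which rewrites $(F_{\d}|W_{\q'}^{\n})(x)$ as $F\bigl(W_{\q}^{\m}(T_{\d',\d'}^{\m}(A_{\d'}(x)))\bigr)$. Peeling off the $W_{\q}^{\m}$ via the Atkin-Lehner eigenvalue $\varepsilon(\q)$, then the $T_{\d',\d'}^{\m}$ via the character eigenvalue $\chi(\d')$, yields $\varepsilon(\q)\chi(\d')F_{\d'}(x)$.

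The one point deserving care, and the nearest thing to an obstacle, is the interpretation of $\chi(\d')$ in case~(2): since $\p \mid \q \mid \m$, the ideal $\d' = \p^{\beta}$ is not coprime to~$\m$, so $\chi$ does not act on it through the definition of the character of an eigensystem. However, $T_{\d',\d'}^{\m}$ is well-defined on all of $\Q\M_0(\m)$ (no coprimality is required for the $\Gon$-version of~$T_{\a,\a}$), and $F$ is an eigenfunction for it; the symbol $\chi(\d')$ in the statement should be read as this eigenvalue, extending $\chi$ to non-coprime ideals by $\chi(\a) := \lambda(T_{\a,\a}^{\m})$. With that understood, both identities follow purely formally from~\Cref{prop:Ad-commutes-Wq} and the strong-eigenfunction hypothesis, with no further work needed.
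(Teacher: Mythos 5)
Your proposal is correct in its overall structure, and it is essentially the proof the paper intends: the paper states \Cref{prop:Ad-commutes-Wq-function} without proof immediately after establishing the operator-level identity \Cref{prop:Ad-commutes-Wq}, so the formal transport via $F|T = F\circ T$ that you carry out is exactly the step being left implicit.

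You are also right to flag the interpretation of~$\chi(\d')$ in case~(2) as the one delicate point, since~$\chi$ is a character of~$\J_K^{\m}$ and~$\d'=\p^{\beta}$ is not coprime to~$\m$ when~$\p\mid\q\mid\m$. However, your proposed resolution papers over a real gap: you assert that ``$F$ is an eigenfunction for~$T_{\d',\d'}^{\m}$'' as though it were automatic, but it does not follow from the definitions in the paper. A strong eigenfunction is defined to be an eigenfunction for~$\T$ (which by \Cref{def:T} contains~$T_{\a,\a}$ only for~$\a$ coprime to the level) and for the~$W_{\q}$ with~$\q\mid\mid\m$. From these one gets eigenvalues for~$T_{\q,\q}=W_{\q}^2$ at exact divisors~$\q$, but~$\d'=\p^{\beta}$ need not be an exact divisor of~$\m$ (the exponent~$\beta$ is unrelated to the exact power of~$\p$ in~$\m$), so one cannot extract a well-defined eigenvalue for~$T_{\d',\d'}^{\m}$ from the hypotheses alone; the infinite-dimensional function space~$\SS_V(\M_0(\m))$ gives no one-dimensionality of eigenspaces to fall back on either. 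This is arguably an imprecision in the paper's own statement (it uses~$\chi(\d')$ without defining it on non-coprime ideals), so you should not be faulted for inheriting it, but your write-up presents the required eigenvalue property as a fact when it is an additional hypothesis. The clean fix is to state explicitly that one assumes~$F$ to be a simultaneous eigenfunction for~$T_{\a,\a}^{\m}$ for \emph{all} fractional ideals~$\a$ (not just those coprime to~$\m$), defining~$\chi(\a):=\lambda(T_{\a,\a}^{\m})$ accordingly; this extension is automatic for the actual automorphic forms the theory models. Subject to that clarification, both computations go through exactly as you describe.
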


\begin{rmk}
 In case~(1), $F_\d$ is a strong eigenfunction, and with the same
 eigenvalue~$\varepsilon(\q)$ for~$W_{\q}$ as~$F$. In case~(2), the
 functions~$F_{\d}$ and~$F_{\d'}$ are interchanged, up to a constant
 factor, so (unless~$\d=\d'$) neither is an eigenfunction
 for~$W_{\q'}^{\n}$.  However, in both cases, the subspace
 of~$\SS_V(\n)$ spanned by all the~$F_{\d}$ (that is, by
 $F\left|A_{\p^\gamma}\right.$ for~$0\le\gamma\le\alpha+\beta$) is
 preserved by~$W_{\q'}^{\n}$, as well as by~$\T$.  To obtain strong
 eigenfunctions, we must form suitable linear combinations of the
 pairs~$F_{\d}$, $F_{\d'}$, specifically~$aF_{\d} +
 \varepsilon(\q)\chi(\d)F_{\d'}$ where~$a^2=\chi(\q')$.

 In the special case where~$\chi$ is trivial, we
 have~$\varepsilon(\q)^2=1$, so~$\varepsilon(\q)=\pm1$, the
 Atkin-Lehner operators are involutions, and~$F_{\d}\pm F_{\d'}$ are
 strong eigenfunctions, as in~\cite[Lemma~2.7.1]{JCbook2}.
\end{rmk}

\begin{rmk}
Since $(W_{\q})^2=T_{\q,\q}$ we have $\varepsilon(\q)^2 = \chi(\q)$,
which is consistent with applying (1) twice.  Applying (2) twice, with
both~$\d$ and~$\d'$, we see that~$F_{\d}$ is an eigenfunction
for~$(W_{\q'})^2$ with eigenvalue~$\varepsilon(\q)^2\chi(\d)\chi(\d')
= \chi(\q')$, which is consistent with $(W_{\q'})^2=T_{\q',\q'}$.
\end{rmk}

For a fixed eigenfunction~$F\in\SS_V(\m)$, and~$\n$ with~$\m\mid\n$,
following classical terminology we may call the subspace
of~$\SS_V(\n)$ spanned by the set of $F\left|A_{\d}\right.$ for
all~$\d\mid\m^{-1}\n$ an \emph{oldspace} at level~$\n$.  The preceding
propositions show that all functions in the oldspace are
eigenfunctions, with the same eigenvalues as~$F$, and that the
oldspace has a basis of strong eigenfunctions.

To define the \emph{newspace} at level $\n$, one would take the
orthogonal complement in~$S_V(\n)$ of all the oldspaces, as~$\m$
ranges over all proper divisors of~$\n$, with respect to a suitable
inner product generalising the Petersson inner product.  Our
space~$S_V(\n)$ is too big for this; but after cutting down to the
subspaces of functions satisfying all the conditions for an
automorphic form, which are finite-dimensional, one is able to define
the inner product via a suitable integral.  We do not elaborate on
this here.
\begin{rmk}
The dual Hecke operators defined above in
section~\ref{subsec:alt-norm} are in fact adjoints of the Hecke
operators with respect to the Petersson inner product, where this can
be defined.
\end{rmk}

\section{Matrices for principal Hecke operators} \label{sec:hecke-matrices}

Classically, Hecke operators may be defined in terms of certain
matrices in $\Mat_2(\Z)$.  This description is particularly useful for
explicit computations.  A similar description for general number
fields~$K$ using matrices in~$\Mat_2(\OK)$ is only possible for
principal operators, essentially because only these map modular points
to modular points whose underlying lattice has the same Steinitz
class.  On the other hand, we saw in~\Cref{sec:Hecke} that the
eigenvalue system for a formal Hecke eigenform is determined up to
unramified twist by the eigenvalues of principal operators.  Hence, in
practice, it suffices to know how to compute the action of
$T_{\a,\a}T_\b$ on $\M_1^{(1)}(\n)$, or $T_{\a,\a}T_{\b}W_{\q}$ on
$\M_0^{(1)}(\n)$, when $\a^2\b$, or $\a^2\b\q$ respectively, is
principal.

It is more convenient to use the dual operators introduced in
subsection~\ref{subsec:alt-norm}, as these may be expressed using
integral matrices; for example, the operator
$\widetilde{T}_{\a,\a}\widetilde T_\b\widetilde{W}_{\q}$ maps a
lattice~$L$ to a formal linear combination of sublattices of
index~$\a^2\b\q$, and may be expressed as a formal linear combination
of matrices in~$\Mat_2(\OK)$, all of determinant~$\delta$,
where~$\delta\in\OK$ generates the principal ideal~$\a^2\b\q$.

Implementations of all these principal operators over imaginary
quadratic fields, may be found in the author's {\tt C++}
package~\cite{bianchi-progs}.

We first discuss the Hecke operators~$\widetilde{T}_{\a,\a}$
and~$\widetilde T_\b$, before turning our attention to Atkin-Lehner
operators.

\subsection{Matrices for Hecke operators}
\subsubsection{Action on~$\RR$}
First consider~$\widetilde{T}_{\b}$ where $\b$ is principal.  To
determine its action on principal lattices, and $L=\RR$ in particular,
we first determine the sublattices~$M\subset\RR$ with $[\RR:M]=\b$.
For each such~$M$, there is a basis~$x,y$ for $\RR$, and uniquely
determined ideals~$\b_1,\b_2$ with $\b=\b_1\b_2^2$, such that
$M=\b_1\b_2x\oplus\b_2y=\b_2(\b_1x+\OK y)$.  If $\gamma\in\Gamma$ is
the matrix with rows~$x,y$ then we have
$M=(\b_1\b_2\oplus\b_2)\gamma$.  Writing $\b_1\b_2\oplus\b_2 =
(\RR)B$, with $B$ a fixed $(\b_1\b_2,\b_2)$-matrix, we have
$M=(\RR)B\gamma$.  As $\gamma$ varies in~$\Gamma$, this gives all
sublattices~$M$ with index~$\b$ and quotient~$\RR/M\cong
\OK/\b_1\b_2\oplus \OK/\b_2$.

This description makes it clear that~$\Gamma$ acts transitively on the
set of lattices with elementary divisors~$\b_1\b_2,\b_2$.  Moreover,
$\gamma_1,\gamma_2\in\Gamma$ give the same lattice~$M$ if and only if
\[
   \Go(\b_1)\gamma_1 = \Go(\b_1)\gamma_2
\]
since $\Gamma\cap\Delta(\b_1\b_2,\b_2)=\Go(\b_1)$.  Hence, to obtain
each such~$M$ exactly once, we let~$\gamma$ run through a complete set
of $\psi(\b_1)$ right cosets of~$\Go(\b_1)$ in~$\Gamma$.  Thus, for
each factorization~$\b=\b_1\b_2^2$, we have $\psi(\b_1)$ matrices~$BC$
where $B$ is a fixed $(\b_1\b_2,\b_2)$-matrix, and~$C$ runs through
coset representatives of~$\Go(\b_1)$ in~$\Gamma$. These cosets are in
bijection with $\P^1(\b_1)$, and we may construct a set of coset
representatives by lifting M-symbols from~$\P^1(\b_1)$ to~$\Gamma$, as
in subsection~\ref{sec:M-symbols}.  The operator~$\widetilde T_{\b}$
is then given by the formal sum of all these, as $\b_2$ runs over the
ideals whose square divides~$\b$.

In the special case where~$\b=\<\beta>$ is square-free, we may take
$B=\diag(\beta,1)$. Then $\widetilde T_{\b}$ is given by the formal
sum of matrices~$BC$ where $C$ runs over coset representatives
of~$\Go(\b)$ in~$\Gamma$, the number of which is
$\psi(\b)=\prod_{\p\mid\b}(1+N(\p))$.  In general, the number of
sublattices of index~$\b=\prod_{\p}\p^e$ is given (after a
straightforward calculation) by~$\eta(\b)$, where
\begin{equation}
  \label{eqn:eta-def}
   \eta(\b) = \prod_{\p\mid\b}(1+N(\p)+N(\p)^2+\dots+N(\p)^e) =
   \prod_{\p\mid\b}
\left(
\frac{N(\p)^{e+1}-1}{N(\p)-1}
\right).
\end{equation}

The operator~$\widetilde{T}_{\a,\a}$, where $\a$ is integral with
$\a^2$ principal, is represented by an~$(\a,\a)$-matrix.  For the
general case of $\widetilde{T}_{\a,\a}\widetilde{T}_\b$ where~$\a^2\b$
is principal, we replace the $(\b_1\b_2,\b_2)$-matrix~$B$ in the above
discussion of~$\widetilde{T}_\b$ with an $(\a\b_1\b_2,\a\b_2)$-matrix.

In summary:
\begin{lem}
\label{lem:index-b-matrices}
  Let~$\a,\b$ be integral ideals with~$\a^2\b=\<\beta>$
  principal. Then there is a set of~$\eta(\b)$ matrices~$g_i$, all
  with determinant~$\beta$, such that
  \[
  \widetilde{T}_{\a,\a}\widetilde{T}_\b(\RR) = \sum_{i=1}^{\eta(\b)}(\RR)g_i.
  \]
  The number of~$g_i$ is~$\eta(\b)=\sum_{\b=\b_1\b_2^2}\psi(\b_1)$,
  where the sum is over all factorizations of~$\b$ of the
  form~$\b=\b_1\b_2^2$. For each such factorization, there
  are~$\psi(\b_1)$ matrices~$g_i=BC$ where~$B$ is a
  fixed~$(\a\b_1\b_2,\a\b_2)$-matrix with~$\det(B)=\beta$ and $C$ runs
  through a set of coset representatives with determinant~$1$
  of~$\Go(\b_1)$ in~$\Gamma$.

  The collection of matrices~$\{g_i\}$ normalises~$\Gamma$, in the
  sense that there is a permutation~$\sigma$
  of~$\{1,2,\dots,\eta(\b)\}$ such that $g_i\Gamma=\Gamma
  g_{\sigma(j)}$ for all~$i$.
\end{lem}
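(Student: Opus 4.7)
The first three assertions---existence of the matrices $g_i$ with $\det g_i=\beta$, their form $g_i=BC$, and the count $\eta(\b)$---are essentially already assembled in the narrative immediately preceding the lemma statement; only the normalisation assertion requires genuinely new work.

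\emph{Parameterisation, determinants, and count.} For each factorisation $\b=\b_1\b_2^2$, sublattices $M\subseteq\a\oplus\a$ of index $\b$ with elementary divisors $(\a\b_2,\a\b_1\b_2)$ are of the form $M=(\RR)BC$ with $B$ a fixed $(\a\b_1\b_2,\a\b_2)$-matrix and $C\in\Gamma$. Such a $B$ exists by Proposition~2.4 of~\cite{JC+MA}, since the ideal product $(\a\b_1\b_2)(\a\b_2)=\a^2\b=\langle\beta\rangle$ is principal by hypothesis; rescaling $B$ by a unit achieves $\det B=\beta$. The parameterisation is unique modulo $\Gamma\cap\Delta(\a\b_1\b_2,\a\b_2)=\Go(\b_1)$ (the $\a$-factors cancel from the intersection computation just as they drop out of the corresponding calculation in the $\a=\OK$ case), yielding $\psi(\b_1)=[\Gamma:\Go(\b_1)]$ sublattices per factorisation. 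Each $C$ may be chosen with $\det C=1$ by lifting M-symbols in $\P^1(\b_1)$ via the surjection $\SL(2,\OK)\to\SL(2,\OK/\b_1)$ of \Cref{sec:M-symbols}, combined with the decomposition $\Gamma=\Go(\b_1)\cdot\SL(2,\OK)$ (valid since $\Go(\b_1)$ contains every $\diag(u,1)$ with $u\in\OK^\times$). Then $\det g_i=\beta$, and summing across factorisations gives $\sum_{\b=\b_1\b_2^2}\psi(\b_1)=\eta(\b)$; this is verified via multiplicativity on prime powers, as $\sum_{j=0}^{\lfloor e/2\rfloor}\psi(\p^{e-2j})=1+N(\p)+\cdots+N(\p)^e$ for $\b=\p^e$.

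\emph{Normalisation.} This is the main subtlety. My plan is to introduce the set
\[
\ZG:=\{g\in\Mat_2(\OK): \text{all entries of } g \text{ lie in } \a,\ \det g\in\beta\OK^\times\},
\]
which is invariant under both left and right multiplication by~$\Gamma$: the entries of $\gamma g$ and of $g\gamma$ are $\OK$-linear combinations of entries of $g$, hence still in $\a$, and the determinant changes only by a unit. Any $g\in\ZG$ has row span $(\RR)g\subseteq\a\oplus\a$ of index~$\b$, so by the parameterisation $(\RR)g=(\RR)g_i$ for some~$i$, which forces $g\in\Gamma g_i$; hence $\ZG=\bigsqcup_i\Gamma g_i$. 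Bi-invariance then implies that $\ZG$ is simultaneously a disjoint union of left cosets $g\Gamma$ of the same total cardinality, yielding a bijection~$\sigma$ between the two coset structures. The main obstacle is literal coset matching: the $BC$-representatives for varying $C$ within one factorisation all lie in the common left coset $B\Gamma$, so the identification $g_i\Gamma=\Gamma g_{\sigma(i)}$ is most naturally interpreted at the level of the bi-invariant set $\ZG$, or else requires re-choosing the $g_i$ to serve simultaneously as left- and right-coset representatives---which can be done by an inductive construction, one coset at a time, exploiting the bi-invariance.
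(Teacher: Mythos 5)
Your treatment of the first three assertions is fine and mirrors the narrative that precedes the lemma in the paper; the paper's own proof simply waves at that narrative, so your filling-in (existence of $B$ from Proposition~2.4 of \cite{JC+MA} since $\a^2\b$ is principal, the stabiliser $\Gamma\cap\Delta(\a\b_1\b_2,\a\b_2)=\Go(\b_1)$ after the $\a$-factors cancel, the decomposition $\Gamma=\Go(\b_1)\cdot\SL(2,\OK)$ and hence determinant-one representatives, and the prime-power verification of $\sum\psi(\b_1)=\eta(\b)$) is exactly the detail the paper leaves implicit.

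For the normalisation assertion, your reformulation via
$\ZG=\{g\in\Mat_2(\OK):g\in\a\Mat_2(\OK),\ \det g\in\beta\OK^\times\}$
is a clean and correct packaging of what the paper's one-line proof actually establishes. The paper argues simply that right multiplication by any $\gamma\in\Gamma$ permutes the sublattices of index $\b$; in your language that is precisely the right-$\Gamma$-invariance of $\ZG=\bigsqcup_i\Gamma g_i$. Both arguments therefore prove: \emph{for each $\gamma\in\Gamma$ there is a permutation $\sigma_\gamma$ with $\Gamma g_i\gamma=\Gamma g_{\sigma_\gamma(i)}$ for all $i$}. This is also exactly what is used in the paragraph immediately after the lemma, to show that the Hecke action on an arbitrary free lattice $L=(\RR)U$ is independent of the choice of $U$.

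Your instinct about the ``literal coset matching'' being an obstacle is correct, and you should follow it through to its conclusion: the statement $g_i\Gamma=\Gamma g_{\sigma(i)}$ (with $\sigma$ a single fixed permutation and with the typo $\sigma(j)$ read as $\sigma(i)$) is simply \emph{false} for the constructed $g_i$. As you observe, within a single factorisation $\b=\b_1\b_2^2$ the $\psi(\b_1)$ matrices $g_i=BC$ all lie in one left coset $B\Gamma$, so the $g_i\Gamma$ cannot be pairwise distinct, whereas the $\Gamma g_{\sigma(i)}$ are; and even for a single $g_i$, the left coset $g_i\Gamma$ is never a single right coset in general (e.g.\ for $K=\Q$, $\a=\Z$, $\b=(p)$, $g_0=\smat{p}{0}{0}{1}$: $g_0\Gamma$ consists of matrices with first row divisible by $p$, which is not any $\Gamma g_j$). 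So your option of re-choosing the $g_i$ as a common system of left/right representatives, while achievable in principle (this is a classical double-coset argument), is not what is intended, would break the $g_i=BC$ normal form asserted in the lemma, and is unnecessary for the application. The correct reading is the $\gamma$-dependent right-permutation statement, which both your $\ZG$ argument and the paper's one-liner deliver.

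One small imprecision: when you say sublattices ``with elementary divisors $(\a\b_2,\a\b_1\b_2)$'' you presumably mean the coordinate ideals of $M$ relative to a basis of $\a\oplus\a$; the elementary divisors of the quotient $(\a\oplus\a)/M$ are $(\b_2,\b_1\b_2)$, without the $\a$-factor.
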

\begin{proof}
Only the last part remains to be proved, and this follows immediately
from the fact that right multiplication by~$\Gamma$ permutes the
lattices of fixed index~$\b$.
\end{proof}

\subsubsection{Action on general modular points}

We have seen how to construct, for all principal ideals~$\b$, a set of
$\eta(\b)$ matrices~$g_i$ such that the sublattices of~$\RR$ of
index~$\b$ are precisely the $(\RR)g_i$ for~$1\le i\le\eta(\b)$.  We
call such a set of matrices \emph{Hecke matrices (for index~$\b$)}.

If $L$ is an arbitrary free lattice, then $L=(\RR)U$ for some
$U\in\G$, and the sublattices $M\subseteq L$ with $[L:M]=\b$ are given
by $M_i=(\RR)g_iU$ for $1\le i\le\eta(\b)$, since
$MU^{-1}\subseteq\RR$, with the same index.  Here, the matrix~$U$ is
only determined up to left multiplication by a
matrix~$\gamma\in\Gamma$.  Replacing~$M_i$ by~$(\RR)g_i\gamma U$ for
each~$i$ gives the same sublattices (possibly permuted), by the last
statement of Lemma~\ref{lem:index-b-matrices}, so the Hecke action on
free lattices is well-defined in terms of the Hecke matrices~$g_i$ by
\[
   \widetilde{T}_{\b}: L=(\RR)U \mapsto \sum_{i=1}^{\eta(\b)}(\RR)g_iU.
\]

Extending this to an action on principal modular points, we must also
take into account the level structure.  We restrict to principal
ideals~$\b$ which are coprime to the level~$\n$.  In the construction
of the $g_i$ we must ensure that each matrix has its~$(2,1)$-entry
in~$\n$.  To do this, we first use $(\b_1\b_2,\b_2)$-matrices~$B$ of
level~$\n$, as defined in subsection~\ref{sec:ab-mats}.  Secondly, the
coset representatives for~$\Go(\b_1)$ in~$\Gamma$ we use must lie
in~$\Gon$.  These may be obtained by lifting from~$\P^1(\b_1)$
to~$\Gamma$ via~$\P^1(\b_1\n)$, as explained in
subsection~\ref{sec:M-symbols}.

When the Hecke matrices~$g_i$ have this additional property, we call
them \emph{Hecke matrices of level~$\n$}.  These now give a
well-defined action on~$\Gon$-modular points:
\[
   \widetilde{T}_{\b}: (L,L')=(\RR,\OK\oplus\n^{-1})U \mapsto
   \sum_{i=1}^{\eta(\b)}(\RR,\OK\oplus\n^{-1})g_iU.
\]
Similarly, for the operator $\widetilde{T}_{\a,\a}$ when $\a$ is
coprime to~$\n$ and $\a^2$ is principal, we must use
an~$(\a,\a)$-matrix of level~$\n$, and in the construction of the
matrices for $\widetilde T_{\a,\a}\widetilde{T}_{\b}$, the
$(\a\b_1\b_2,\a\b_2)$-matrices we use must be of level~$\n$.

\subsection{Explicit special cases}
We now make the constructions above completely explicit in some cases
of particular use in computations, namely
\begin{itemize}
\item $\widetilde{T}_{\p}$ for~$\p$ a principal prime;
\item $\widetilde{T}_{\p^2}$ for~$\p$ prime, with $\p^2$ principal;
  \item $\widetilde{T}_{\p\q}$ for~$\p$, $\q$ distinct primes with~$\p\q$ principal;
\end{itemize}
together with extended version of these where the ideals concerned are
not principal, but have square ideal class, so that we obtain
principal operators by composing with $\widetilde{T}_{\a,\a}$ for
suitable~$\a$.  Here we assume that the level~$\n$ is fixed, and all
these ideals ($\p$, $\q$, and~$\a$) are coprime to~$\n$.

\subsubsection*{Principal primes} Taking
$\b=\p$ to be a principal prime ideal, say $\p=\<\pi>$.  Then there is
only one factorisation of the form~$\b=\b_1\b_2^2$, namely~$(\b_1,
\b_2) = (\p, \OK)$, so we may take $B=\mat{\pi}{0}{0}{1}$. As coset
representatives for~$\Go(\p)$ we take~$\mat{1}{0}{0}{1}$ and
$\mat{0}{1}{1}{x}$ for~$x\pmod{\p}$.  Then the matrices giving the
sublattices of index~$\p$ are
\[
\mat{\pi}{0}{0}{1}, \qquad\text{and}\qquad
\mat{\pi}{0}{0}{1}\mat{0}{1}{1}{x}=\mat{0}{\pi}{1}{x}
\qquad\text{(for~$x\in\OK\pmod{\p}$)}.
\]
As the last~$N(\p)$ of these do not have level~$\n$, we adjust them by
multiplying each on the left by $\mat{0}{1}{1}{0}$ to obtain the
familiar set of Hecke matrices for a principal prime:
\[
  \mat{\pi}{0}{0}{1},\quad\text{and}\quad
  \mat{1}{x}{0}{\pi}\quad\text{for~$x\in\OK\mod{\p}$}.
\]
These have level~$\n$ for every~$\n$.

\subsubsection*{Primes with square class}
Suppose that the ideal class~$[\p]$ is a square, with $\a^2\p$
principal and~$\a$ coprime to~$\n$.  Let $B$ be an~$(\a\p,\a)$-matrix
of level~$\n$, and let $\nu\in\n\setminus\p$ (noting that
$\p\nmid\n\implies\p\not\supseteq\n$).  Then a suitable set of
$N(\p)+1$ matrices
representing~$\widetilde{T}_{\a,\a}\widetilde{T}_\p$ consists of
\[
B, \quad\text{and}\quad B\mat{1}{x}{\nu}{1+x\nu} \quad\text{for
  $x\pmod{\p}$},
\]
since it is easy to see that the matrices~$\mat{1}{x}{\nu}{1+x\nu}$
for~$x\pmod{\p}$ together with the identity matrix represent all the
cosets of $\Go(\p)$, and lie in~$\Go(\n)$.

\subsubsection*{Principal prime squares} Let
$\b=\p^2=\<\beta>$ where $\p$ is a prime whose class has order~$2$.
There are two factorizations $\b=\b_1\b_2^2$ to be considered, namely
$(\b_1, \b_2) = (\OK, \p)$ and $(\b_1, \b_2) = (\p^2, \OK)$.

Taking $\b_2=\p$ gives the lattice $M=\p\oplus\p=(\RR)B$, where~$B$ is
a $(\p,\p)$ matrix.  Hence, for the first Hecke matrix,~$B_1$ can be any
$(\p,\p)$ matrix of level~$\n$.

When $\b_1=\p^2$ and $\b_2=\OK$, we set $B_2=\mat{\beta}{0}{0}{1}$,
and use the matrices $B_2C$ as~$C$ runs through lifts
to~$\Go(\n)$ of all~$(c:d)\in\P^1(\p^2)$.

To make this more explicit, fix~$\nu\in\n\setminus\p$, and first take,
as coset representatives for $\Go(\p^2)$, matrices $\mat{0}{1}{1}{x}$
for~$x\in\OK\pmod{\p^2}$, together with matrices $\mat{1}{0}{y\nu}{1}$
for~$y\in\p\pmod{\p^2}$.  The additional $N(\p)^2+N(\p)$ Hecke
matrices are then:
\begin{align*}
    \mat{1}{x}{0}{\beta}& \qquad\text{for
      $x\in\OK\mod{\p^2}$};\\
    \mat{\beta}{0}{y\nu}{1}& \qquad\text{for
      $y\in\p\mod{\p^2}$};
\end{align*}
here, for the first set, we have again swapped the rows to obtain
matrices of level~$\n$.

A similar set of Hecke matrices were used in Bygott's
thesis~\cite[Prop.~123]{JBthesis}; he specified that
$\nu\equiv1\pmod\p$, which is stronger than necessary.

\subsubsection*{$\widetilde{T}_{\a,\a}\widetilde{T}(\p^2)$ for arbitrary
  primes, with $\a\p$ principal} For any prime~$\p$ not dividing~$\n$,
we may take~$\a$ to be an ideal coprime to~$\n$ in the inverse class
to~$\p$, so that the operator~$\widetilde{T}_{\a,\a}\widetilde{T}(\p^2)$ is
principal.  As before, this operator is a formal sum of
$N(\p)^2+N(\p)+1$ matrices.  Taking $B_1$ to be an $(\a\p,\a\p)$-matrix
of level~$\n$ and $B_2$ to be an $(\a\p^2,\a)$-matrix of level~$\n$,
we use $B_1$ and $B_2C$ as $C$ again runs through lifts
to~$\Go(\n)$ of all $(c:d)\in\P^1(\p^2)$.

\subsubsection*{Principal products of two primes} Let
$\b=\p\q=\<\beta>$ where $\p$ and~$\q$ are distinct prime ideals in
inverse ideal classes.  There is only one factorization of~$\b$ to be
considered, namely~$(\b_1, \b_2) = (\p\q,\OK)$.  Using similar
arguments as before, we find the following set
of~$\eta(\p\q)=(N(\p)+1)(N(\q)+1)$ Hecke matrices of level~$\n$
(assuming that $\p\q$ is coprime to~$\n$), taking
$\nu\in\n\setminus(\p\cup\q)$:
\begin{align*}
  \mat{1}{x}{0}{\beta}&\qquad\text{for $x\mod{\p\q}$;}\\
  \mat{\beta}{0}{y\nu}{1}&\qquad\text{for $y\in\p\cup\q\mod{\p\q}$;}
\end{align*}
together with both {a $(\p,\q)$-matrix of level~$\n$} and {a
  $(\q,\p)$-matrix of level~$\n$}.  There are $N(\p)N(\q)$ matrices of
the first kind and~$N(\p)+N(\q)-1$ of the second kind.  Again, these
are similar to Hecke matrices used by Bygott
in~\cite[Prop.~124]{JBthesis}.

\subsubsection*{Products of two primes whose product has square class}
More generally, suppose that $\p$ and~$\q$ are distinct primes such
that the class of~$\p\q$ is a square, with $\a^2\p\q$ principal (all
ideals coprime to~$\n$). Then the principal
operator~$\widetilde{T}_{\a,\a} \widetilde{T}_{\p\q}$ is again a sum of
$(N(\p)+1)(N(\q)+1)$ matrices, namely $BC$ where $B$ is a
fixed~$(\a\p\q,\a)$-matrix of level~$\n$ and~$C$ runs through lifts
to~$\Go(\n)$ of all~$(c:d)\in\P^1(\p\q)$.

\subsection{Matrices for Atkin-Lehner operators}
\label{sec:Atkin-Lehner-matrices}
The material in this section extends the special cases treated in
Bygott's thesis (\cite[Section 1.4]{JBthesis}) and Lingham's thesis
(\cite[Section 5.3]{LinghamThesis}), and first appeared in the thesis
of Aran\'es (\cite[\S2.3.1]{MaiteThesis}). Bygott used an overgroup of
$\Gon$, containing $\Gon$ as a normal subgroup of finite index,
constructed from ideals whose class has order~$2$ in the class group.
In our notation, this overgroup consists of all $(\a,\a)$-matrices
where $\a$ is an integral ideal whose square is principal.  When the
class number is odd, a method of generalizing the classical
construction of Atkin-Lehner operators was given by Lingham.

Here we treat the following cases, where $\n$ is the level and
$\q\mid\mid\n$:
\begin{itemize}
\item the Atkin-Lehner operator~$W_\q$, when $\q$ is principal;
\item the operator $T_{\m,\m}W_\q$ with~$\m$ coprime to~$\n$, when
  $\m^2\q$ is principal;
\item The operator~$T_{\p}W_\q$ with $\p$ a prime not dividing~$\n$,
  when $\p\q$ is principal.
\end{itemize}
The first two of these can each be defined using a single matrix
in~$\Mat_2(\OK)$, whose determinant generates~$\q$ or $\m^2\q$
respectively, and we deal with these first.

As in the previous section, it is more convenient to use the dual
operators~$\widetilde{W}_\q$, $\widetilde{T}_{\m,\m}\widetilde{W}(\q)$, and
$\widetilde{T}_{\p}\widetilde{W}(\q)$, since these are realised by integral
matrices.

\subsubsection{$W_{\q}$ with $\q$ principal}
By definition, the operator~$\widetilde{W}_{\q}$ maps the standard modular
point~$(\OK\oplus\OK,\OK\oplus\n^{-1}\OK)$ to~$(\q\oplus\OK,
\OK\oplus\q\n^{-1})$.

\begin{defn}
Let $\q$ be a principal ideal such that $\q\mid\mid\n$.  A
\emph{$W_\q$-matrix of level~$\n$} is a matrix~$M=\mat{x}{y}{z}{w}$
such that
\[
  x,w\in\q;\qquad y\in\OK;\qquad z\in\n;\qquad \<\det M>=\q.
\]
More concisely,
\[
   M \in \mat{\q}{\OK}{\n}{\q} \qquad\text{with}\quad \<\det M>=\q.
\]
\end{defn}

$W_{\q}$-matrices are exactly the matrices~$M$ such that
\begin{align*}
   (\OK\oplus \OK,\OK\oplus\n^{-1})M &= (\q\oplus \OK,\OK\oplus\q\n^{-1});\\
   (\q\oplus \OK,\OK\oplus\q\n^{-1})M &= \q(\OK\oplus \OK,\OK\oplus\n^{-1}).
\end{align*}
Equivalently,
\begin{align*}
  \mat{\OK}{\OK}{\n}{\OK}M &= \mat{\q}{\OK}{\n}{\q}, \quad\text{and}\\
  \mat{\q}{\OK}{\n}{\q}M &= \q\mat{\OK}{\OK}{\n}{\OK}.
\end{align*}

$W_\q$-matrices exist, and may be constructed as follows:

\begin{prop}[Existence of $W_\q$-matrices] \label{prop:Wq-exist}
$W_\q$-matrices exists for every principal exact
  divisor~$\q\mid\mid\n$.
\end{prop}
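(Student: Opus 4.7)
The plan is to construct a $W_{\q}$-matrix explicitly, trading on the coprimality of $\q$ and its complementary divisor. Since $\q$ is principal, write $\q=\<q>$, and set $\q'=\n\q^{-1}$, so the exact-divisor hypothesis $\q\mid\mid\n$ gives $\q+\q'=\OK$. The first step would be to produce, via B\'ezout applied to these coprime ideals, elements $d\in\OK$ and $n'\in\q'$ satisfying
\[
   qd - n' = 1,
\]
equivalently $qd\equiv 1\pmod{\q'}$; existence of $d$ (rather than merely an ideal-theoretic relation) uses that $\q$ is principal, which is the one place principality enters.

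With these ingredients in hand, I would take
\[
    M = \mat{q}{1}{qn'}{qd}.
\]
The verification is then a direct check of the four membership conditions together with the determinant: the diagonal entries $q$ and $qd$ both lie in $\q$; the $(1,2)$-entry $1$ lies in $\OK$; and the $(2,1)$-entry $qn'$ lies in $\q\q'=\n$ since $q\in\q$ and $n'\in\q'$. Finally, using $qd-n'=1$,
\[
   \det M \;=\; q\cdot qd - 1\cdot qn' \;=\; q(qd-n') \;=\; q,
\]
so $\<\det M>=\q$ as required.

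There is no real obstacle here: the whole proof is driven by the single B\'ezout relation $qd-n'=1$, and the construction is the evident generalization of the classical $W_q$-matrix $\smat{q}{k}{N}{qd}$ for $K=\Q$ recalled from~\cite{JCbook2}, with the rational B\'ezout identity replaced by its ideal-theoretic analogue. It is worth noting that principality of~$\q$ is used essentially to place a generator of~$\q$ in both diagonal positions at once; this is exactly what fails in general, and motivates the subsequent treatment of composite operators such as $T_{\m,\m}W_{\q}$ designed to handle the case when $\q$ is only principal after twisting by a square ideal class.
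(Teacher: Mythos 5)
Your construction is correct. All the required memberships and the determinant condition check out: with $\q=\<q>$, $\q'=\n\q^{-1}$, and $qd-n'=1$ (obtained from $\q+\q'=\OK$ by writing the element of $\q$ as $qd$, which is exactly where principality is used), the matrix $M=\smat{q}{1}{qn'}{qd}$ has $q,qd\in\q$, $1\in\OK$, $qn'\in\q\q'=\n$, and $\det M=q(qd-n')=q$, so $\<\det M>=\q$.

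Your route is genuinely different from, and rather simpler than, the paper's. The paper does not set the $(1,1)$-entry equal to $q$; instead it introduces two auxiliary integral ideals $\a\in[\n]^{-1}$ (coprime to $\n$, with $\a\n=\<z>$) and $\b\in[\q]^{-1}$ (coprime to $\a\q'$, with $\b\q=\<x>$), shows $\q=\<xg>+\<z>$, and then writes $g=gxw-zy$ to get $M=\smat{x}{y}{z}{gw}$. The reason for this extra machinery is that the paper's proof is deliberately structured as a template: \Cref{prop:Wmq-exist} for $W_\q^\m$-matrices (where $\q$ is \emph{not} principal but $\q\m^2$ is) is proved by essentially the same argument with $\a\in[\m\n]^{-1}$ and $\b\in[\m\q]^{-1}$. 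Your construction, by contrast, exploits principality of $\q$ so directly (placing a generator $q$ in both the $(1,1)$ and $(2,1)$ slots) that it does not visibly generalize to that setting. So you gain a cleaner proof of the present proposition, while the paper's version pays a small price in clarity for uniformity with the next one.
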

\begin{proof}
let $\q=\<g>$, let $\a\in[\n]^{-1}$ be an integral ideal coprime
to~$\n$, with $\a\n=\<z>$, and let~$\b\in[\q]^{-1}$ be an integral
ideal coprime to~$\a\q'$, with $\b\q=\<x>$.  Then $\b\q$ and~$\a\q'$
are coprime, so $\q=\b\q^2+\a\n=\<xg>+\<z>$, so we can write
$g=gxw-zy$ with~$y,w\in\OK$.  Now $M=\mat{x}{y}{z}{gw}$ has the
desired properties.
\end{proof}

For further properties of~$W_{\q}$-matrices, see the next subsection,
taking~$\m=\OK$.

\subsubsection{$T_{\m,\m}W_{\q}$ with $\m^2\q$ principal}
With~$\q$ still an exact divisor of~$\n$, we now only assume that the
ideal class $[\q]$ is a square, and let~$\m$ be an ideal coprime
to~$\q'$ such that $\q\m^2$ is principal.
\begin{defn}
A $W_\q^\m$-matrix of level~$\n$ is a matrix~$M=\mat{x}{y}{z}{w}$ such
that
\[
  x,w\in\m\q;\qquad y\in\m;\qquad z\in\m\n;\qquad \<\det M>=\q\m^2.
\]
More concisely,
\[
   M \in \mat{\m\q}{\m}{\m\n}{\m\q} \qquad\text{with}\quad \<\det
   M>=\q\m^2.
\]
\end{defn}

$W_{\q}^{\m}$-matrices are exactly the matrices~$M$ such that
\begin{align*}
   (\OK\oplus \OK,\OK\oplus\n^{-1})M &= \m(\q\oplus \OK,\OK\oplus\q\n^{-1}),\quad\text{and}\\
   (\q\oplus \OK,\OK\oplus\q\n^{-1})M &= \m\q(\OK\oplus \OK,\OK\oplus\n^{-1}),
\end{align*}
or alternatively,
\begin{align*}
  \mat{\OK}{\OK}{\n}{\OK}M &= \mat{\m\q}{\m}{\m\n}{\m\q}=\m\mat{\q}{\OK}{\n}{\q},\quad\text{and}\\
  \mat{\q}{\OK}{\n}{\q}M &= \mat{\m\q}{\m\q}{\m\n\q}{\m\q}=\m\q\mat{\OK}{\OK}{\n}{\OK}.
\end{align*}

When $\q=\m=\<1>$, these matrices are simply elements of~$\Gon$.  When
$\n=\q=\<1>$, these matrices, for all~$\m$ such that $\m^2$ is
principal, generate the normalizer of $\Gamma$ in $\GL(2,K)$ (modulo
scalars), denoted~$\Delta$ in~\cite{JBthesis}.  When $\q=\<1>$ and
$\m^2$ is a principal ideal coprime to~$\n$, then $W_\q^\m$-matrices
of level~$\n$ give matrix representations of the Hecke
operators~$\widetilde{T}_{\m,\m}$, as in the previous section, while
if~$\m=\OK$ then a $W_{\q}^{\m}$-matrix is just a $W_{\q}$-matrix.

Note that if we replace $\m$ by an alternative ideal~$\m'$ in the same
class, then $W_\q^\m$-matrices and $W_\q^{\m'}$-matrices only differ
by a scalar factor.  If we identify two such matrices when they differ
by a scalar factor, then $W_\q^\m$-matrices are associated with pairs
$(\q,[\m])$ such that $[\q\m^2]=0$, and the number of such pairs is
finite for each level~$\n$.

A $W_\q^{\m}$-matrix may also be described as a~$(\m\q,\m)$-matrix of
level~$\n$, whose adjugate is also an~$(\m\q,\m)$-matrix.  In case
$\q=\<1>$, a $W_{\<1>}^{\m}$-matrix is just an~$(\m,\m)$-matrix of
level~$\n$ (where $\m^2$ is principal).

$W_\q^{\m}$-matrices may be constructed in a similar way to
$W_\q$-matrices:

\begin{prop}[Existence of $W_\q^\m$-matrices] \label{prop:Wmq-exist}
$W_\q^\m$-matrices exists for every exact divisor $\q\mid\mid\n$
  and~$\m$ such that $\q\m^2$ is principal.
\end{prop}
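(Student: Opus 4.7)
The plan is to mirror the Bezout-style construction of \Cref{prop:Wq-exist} for principal~$\q$, but with all ideals scaled by~$\m$ and with the coprimality conditions tightened so that the four entries of the resulting matrix land in $\m\q$, $\m$, $\m\n$, $\m\q$ respectively. Write $\q\m^2=\<g>$; I will choose $x\in\m\q$ and $z\in\m\n$ generating specific principal ideals, and then solve a refined Bezout equation to produce the other two entries.

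Using property~(2) of Dedekind domains recalled in \Cref{sec:basic}, first pick an integral ideal $\b\in[\m\q]^{-1}$ coprime to $\m\n$, so that $\b\m\q=\<x>$ for some $x\in\m\q$. Next pick an integral ideal $\a\in[\m\n]^{-1}$ coprime to $\b\m\q$, so that $\a\m\n=\<z>$ for some $z\in\m\n$. These coprimality choices ensure that $\b$ is coprime to both $\a$ and $\q'$ and that $\a$ is coprime to $\q$, so a prime-by-prime check gives $\b\q+\a\q'=\OK$. Multiplying through by $\m^2\q$ yields the key refined identity
\[
  \<x>\m\q+\<z>\m \;=\; \b\m^2\q^2+\a\m^2\n \;=\; \m^2\q \;=\; \<g>.
\]

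Because $g$ lies in $\<x>\m\q+\<z>\m$, rather than merely in the coarser sum $\<x>+\<z>$, we may write $g=xu-vz$ with $u\in\m\q$ and $v\in\m$. Setting
\[
  M \;=\; \mat{x}{v}{z}{u}
\]
gives $x,u\in\m\q$, $v\in\m$, $z\in\m\n$, and $\det M=xu-vz=g$, so $\<\det M>=\q\m^2$. Hence $M$ is a $W_\q^\m$-matrix of level~$\n$.

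The one step that genuinely needs care is the refined identity $\<x>\m\q+\<z>\m=\m^2\q$, rather than the weaker $\<x>+\<z>=\m\q$; only the stronger statement forces $u\in\m\q$ and $v\in\m$, which is precisely what puts the entries of~$M$ into the correct ideals. Once that identity is established, the construction of~$M$ and the verification of its properties are a direct parallel of the $\m=\OK$ case already handled in \Cref{prop:Wq-exist}.
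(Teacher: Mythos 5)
Your proof is correct and follows essentially the same Bezout-style construction as the paper: one chooses integral representatives $\a\in[\m\n]^{-1}$ and $\b\in[\m\q]^{-1}$ subject to coprimality constraints, proves an ideal identity, and solves a B\'ezout equation to fill in the remaining two matrix entries. The only cosmetic difference is that you derive the refined identity $\<x>\m\q+\<z>\m=\m^2\q$ directly and so obtain the $(2,2)$-entry $u$ lying in $\m\q$, whereas the paper establishes $\<gx,z>=\m\q$, multiplies by $\m$, and takes the $(2,2)$-entry to be $gw\in\m^3\q\subseteq\m\q$ with $w\in\m$; the two constructions are interchangeable.
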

\begin{proof}
Let $\q\m^2=\<g>$, let $\a\in[\m\n]^{-1}$ be an integral ideal coprime
to $\m\n$, with $\a\m\n=\<z>$, and let $\b\in[\m\q]^{-1}$ be an
integral ideal coprime to $\a\q'$, with $\b\m\q=\<x>$.

Then
\[
  \<gx,z> = \q\m^2\b\m\q+\a\m\n = \m\q(\b\m^2\q+\a\q') = \m\q,
\]
since each of $\b$, $\m$, $\q$ is coprime to each of~$\a$, $\q'$.
Hence $g\in\m^2\q=\m\<gx,z>$, so $g=gxw-zy$ with $y,w\in\m$.  Now
$\mat{x}{y}{z}{gw}$ is an $W_\q^\m$-matrix.
\end{proof}

\begin{prop}[Uniqueness of $W_\q^\m$-matrices]\label{prop:Wmq-equiv}
Let $M$, $M_1$, and~$M_2$ be $W_\q^\m$-matrices of level~$\n$ (with
the same~$\n$, $\q$, and~$\m$).  Then
\begin{enumerate}
\item $M_1M_2\in\Gon$ (up to a scalar factor).
\item $M_1M_2^{-1}\in\Gon$ and $M_1^{-1}M_2\in\Gon$.
\item The set of all $W_\q^\m$-matrices of level~$\n$ equals the left
  coset $M\Gon$ and also the right coset $\Gon M$.
\item If $\m_1$ is another ideal such that $\q\m_1^2$ is principal,
  say $\m_1=\m\b$ with $\b^2$ principal, then the set of all
  $W_\q^{\m_1}$-matrices is
\[
  \{ MB \mid B\quad\text{is a $(\b,\b)$-matrix of level~$\n$}\}.
\]
\end{enumerate}
\end{prop}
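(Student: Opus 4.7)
The plan is to work throughout with the two characterising module identities that every $W_\q^\m$-matrix $M$ satisfies, namely
\[
A_0(\n)\, M \;=\; \m\mat{\q}{\OK}{\n}{\q}
\qquad \text{and} \qquad
\mat{\q}{\OK}{\n}{\q}\, M \;=\; \m\q\, A_0(\n),
\]
together with the fact that $\Gon$ equals $\{\gamma\in\Gamma : A_0(\n)\gamma = A_0(\n)\}$, and the auxiliary observation that every $\gamma\in\Gon$ also stabilises $\mat{\q}{\OK}{\n}{\q}$ on the right. The last is a direct entry-wise check, valid precisely because $\q\mid\n$.

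For~(1), I would chain the two identities to compute $A_0(\n)\,M_1M_2 = \m\mat{\q}{\OK}{\n}{\q}\,M_2 = \m\cdot\m\q\, A_0(\n) = g\, A_0(\n)$, where $g$ generates $\q\m^2$. Writing $\det M_i = g u_i$ with $u_i\in\OK^\times$, the displayed identity applied to $I\in A_0(\n)$ shows that $g^{-1}M_1M_2\in A_0(\n)\subseteq\Mat_2(\OK)$ with unit determinant, so $g^{-1}M_1M_2\in\Gon$. For~(2), the first defining identity gives $A_0(\n)M_1 = A_0(\n)M_2$, so $A_0(\n)M_1M_2^{-1}=A_0(\n)$, and the same determinant check yields $M_1M_2^{-1}\in\Gon$. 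Post-multiplying the second defining identity by $M_i^{-1}$ gives $A_0(\n)M_i^{-1} = (\m\q)^{-1}\mat{\q}{\OK}{\n}{\q}$, so $A_0(\n)M_1^{-1}=A_0(\n)M_2^{-1}$; the same argument then yields $M_2^{-1}M_1\in\Gon$, equivalently $M_1^{-1}M_2\in\Gon$.

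Part~(3) is then formal: (2) places the set of $W_\q^\m$-matrices inside $\Gon M \cap M\Gon$ once $M$ is fixed; the reverse inclusions follow by checking that $\gamma M$ and $M\gamma$ are themselves $W_\q^\m$-matrices for $\gamma\in\Gon$. The identities $A_0(\n)(\gamma M) = A_0(\n)M$ and $A_0(\n)(M\gamma) = \m\mat{\q}{\OK}{\n}{\q}\gamma = \m\mat{\q}{\OK}{\n}{\q}$ follow from the $\Gon$-stabilisation of $A_0(\n)$ and of $\mat{\q}{\OK}{\n}{\q}$ respectively, and the determinant condition is unchanged up to a unit.

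For~(4), the core step is the module identity $\mat{\q}{\OK}{\n}{\q}\, B = \b\mat{\q}{\OK}{\n}{\q}$ for every $(\b,\b)$-matrix $B$ of level $\n$. Granting this, $A_0(\n)MB = \m\b\mat{\q}{\OK}{\n}{\q} = \m_1\mat{\q}{\OK}{\n}{\q}$ and $\det(MB)$ generates $\q\m^2\b^2 = \q\m_1^2$, so $MB$ is a $W_\q^{\m_1}$-matrix. Conversely, given any $W_\q^{\m_1}$-matrix $M'$, setting $B:=M^{-1}M'$ and combining the first defining identities of $M$ and $M'$ (using the key step in reverse) yields $A_0(\n)B = \b A_0(\n)$; a parallel calculation on the lattice $\OK\oplus\n^{-1}$ supplies the level condition, and together these characterise $B$ as a $(\b,\b)$-matrix of level~$\n$. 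The main obstacle is thus the identity $\mat{\q}{\OK}{\n}{\q}\, B = \b\mat{\q}{\OK}{\n}{\q}$: the inclusion $\subseteq$ is an entry-by-entry computation using that each column of $B$ generates $\b$ (so all entries of $B$ lie in $\b$, with the $(2,1)$-entry in $\b\cap\n$) together with $\q\mid\n$, in direct analogy with the $\Gon$-stabilisation of $\mat{\q}{\OK}{\n}{\q}$; the reverse inclusion is forced by comparing ranks and the fact that $\det B$ generates $\b^2$, which fixes the relative index between the two sides and rules out strict inclusion.
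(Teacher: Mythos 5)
Your proof is correct, but it takes a genuinely different route from the paper's. The paper derives part (1) as a special case of the product formula for $W_\q^\m$-matrices (\Cref{prop:Wmq-prod}, stated and proved immediately afterward, to which the proof forward-references), then gets (2) by observing that $M_i^{-1}$ is a scalar multiple of $\adj M_i$, which is again a $W_\q^\m$-matrix; (3) and (4) then follow from (2) and the identification of $(\b,\b)$-matrices of level~$\n$ with $W_{\<1>}^\b$-matrices. You instead argue directly from the two characterising module identities $A_0(\n)M = \m\mat{\q}{\OK}{\n}{\q}$ and $\mat{\q}{\OK}{\n}{\q}M = \m\q A_0(\n)$, combined with the facts that $\Gon$ is the right stabiliser of $A_0(\n)$ and also stabilises $\mat{\q}{\OK}{\n}{\q}$ (the latter using $\n\subseteq\q$), and you dispose of the determinant bookkeeping by scaling. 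Your route is self-contained and avoids a forward reference, at the cost of some entry-wise checking; the paper's route is shorter once \Cref{prop:Wmq-prod} is available. Two small points to tidy up: in the converse direction of (4), it is the \emph{second} defining identities $\mat{\q}{\OK}{\n}{\q}M = \m\q A_0(\n)$ and its analogue for $M'$ that yield $A_0(\n)B = \b A_0(\n)$ for $B=M^{-1}M'$ (the first identities give the corresponding statement for $M'M^{-1}$), and note that the single relation $A_0(\n)B=\b A_0(\n)$ already encodes both the $(\b,\b)$-matrix condition and the level condition, so the "parallel calculation" you mention is redundant. Also, your key lemma $\mat{\q}{\OK}{\n}{\q}B = \b\mat{\q}{\OK}{\n}{\q}$ requires the $(2,1)$-entry of $B$ to lie in $\b\n$ rather than merely $\b\cap\n$; this holds when $\b$ is coprime to~$\n$, a hypothesis which is tacit in the paper's treatment as well (it is needed for the identification of $(\b,\b)$-matrices of level~$\n$ with $W_{\<1>}^\b$-matrices), so this is not a gap peculiar to your argument.
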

\begin{proof}
Part (1) is a special case of the more general result
in~\Cref{prop:Wmq-prod} below (recalling that $\Gon$ is the set of
$W_{\<1>}^{\<1>}$-matrices).  Part~(2) follows from this, since up to
scalars we have $M_i^{-1}=\adj M_i$, another~$W_\q^\m$-matrix.  This
already shows that every $W_\q^\m$-matrix is in both $M\Gon$ and $\Gon
M$; that these both consist only of $W_\q^\m$-matrices also follows
from~\Cref{prop:Wmq-prod}.  Part~(4) is a similar easy calculation,
using the fact that when $\b^2$ is principal, a $(\b,\b)$ matrix of
level~$\n$ is the same as a $W_{\<1>}^\b$-matrix.
\end{proof}

\begin{prop}[Products of $W_\q^\m$-matrices] \label{prop:Wmq-prod}
For $i=1,2$ let $M_i$ be a $W_{\q_i}^{\m_i}$-matrix of level~$\n$,
where $\q_i\mid\mid\n$ and $\m_i^2\q_i$ is principal.  Then
$M_3=M_1M_2$ is (up to a scalar factor) a $W_{\q_3}^{\m_3}$-matrix of
level~$\n$, where $\m_3=\a\m_1\m_2$, $\a=\q_1+\q_2$ and
$\q_3=\q_1\q_2\a^{-2}$.
\end{prop}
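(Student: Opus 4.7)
The plan is to verify directly that $M_3 = M_1 M_2$ satisfies the two defining conditions of a $W_{\q_3}^{\m_3}$-matrix of level $\n$: namely, that $M_3$ lies in the matrix space $\mat{\m_3\q_3}{\m_3}{\m_3\n}{\m_3\q_3}$ and that $\langle\det M_3\rangle = \m_3^2\q_3$. Both will be forced by the matrix-space containments $M_i \in \mat{\m_i\q_i}{\m_i}{\m_i\n}{\m_i\q_i}$ and the corresponding determinant conditions, combined with some ideal arithmetic involving the exact divisor hypothesis.

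First I would establish the ideal-theoretic preliminaries. Since $\q_i \mid\mid \n$, each $\q_i$ is a product of those $\p^{v_\p(\n)}$ for $\p$ in some subset $S_i$ of the primes dividing $\n$. A prime-by-prime valuation check then gives $\a = \q_1 + \q_2$ as the product over $S_1 \cap S_2$ and $\q_3 = \q_1\q_2\a^{-2}$ as the product over the symmetric difference $S_1 \triangle S_2$, so in particular $\q_3 \mid\mid \n$. The crucial identity needed in the entry calculations is $\q_1\q_2 + \n = \a\q_3$, again verified locally: at each $\p \mid \n$, both sides have valuation $v_\p(\n)$ if $\p \in S_1 \cup S_2$ and $0$ otherwise. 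Finally, $\m_3^2\q_3 = \a^2\m_1^2\m_2^2 \cdot \q_1\q_2\a^{-2} = (\m_1^2\q_1)(\m_2^2\q_2)$, a product of two principal ideals.

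Second, I would compute each entry of $M_3$ using these ideal identities. The $(1,1)$ entry lies in $\m_1\q_1 \cdot \m_2\q_2 + \m_1 \cdot \m_2\n = \m_1\m_2(\q_1\q_2 + \n) = \m_1\m_2\a\q_3 = \m_3\q_3$; the $(2,2)$ entry similarly lies in $\m_3\q_3$; the off-diagonal entries lie in $\m_1\m_2(\q_1+\q_2) = \m_3$ and $\m_1\m_2\n(\q_1+\q_2) = \m_3\n$ respectively. Hence $M_3 \in \mat{\m_3\q_3}{\m_3}{\m_3\n}{\m_3\q_3}$. The determinant condition is immediate from multiplicativity: $\langle\det M_3\rangle = \langle\det M_1\rangle\langle\det M_2\rangle = \m_1^2\q_1 \cdot \m_2^2\q_2 = \m_3^2\q_3$.

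The main, and essentially only, obstacle is the ideal identity $\q_1\q_2 + \n = \a\q_3$, which depends essentially on the exact divisor hypothesis $\q_i \mid\mid \n$ and fails in general; everything else is routine bookkeeping. As for the phrase ``up to a scalar factor'' in the statement, the computation above shows no scalar adjustment is needed with the specific choice $\m_3 = \a\m_1\m_2$; if instead one chose a different representative $\m_3'$ in the same ideal class, the two presentations would differ by a scalar, in line with the remark after \Cref{prop:Wmq-equiv}.
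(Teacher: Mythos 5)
Your proof is correct and follows essentially the same route as the paper's: verify the two defining conditions (membership in the matrix space $\mat{\m_3\q_3}{\m_3}{\m_3\n}{\m_3\q_3}$ and the determinant ideal) via the multiplicativity of determinants together with the ideal identities $\q_1+\q_2=\a$ and $\q_1\q_2+\n=\a\q_3$; the only difference is that you spell out the prime-by-prime verification of the latter identity, which the paper leaves implicit. Your closing observation about the ``up to scalar'' phrase is a sensible addition, consistent with the discussion preceding the proposition.
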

\begin{proof}
If we write $\q_1=\a\q_3'$ and $\q_2=\a\q_3''$ then $\q_3=\q_3'\q_3''$
and $\q_3$ is another exact divisor of~$\n$.  We
also have $\q_3\m_3^2 = \q_3(\a\m_1\m_2)^2 =
(\q_1\m_1^2)(\q_2\m_2^2)$, which is principal, showing that $M_3$ has
the right determinant to be a $W_{\q_3}^{\m_3}$-matrix of level~$\n$.
Finally,
\begin{align*}
M_3 \in
\mat{\m_1\q_1}{\m_1}{\m_1\n}{\m_1\q_1}\mat{\m_2\q_2}{\m_2}{\m_2\n}{\m_2\q_2}
&\subseteq
\mat{\m_1\m_2(\q_1\q_2+\n)}{\m_1\m_2(\q_1+\q_2)}{\m_1\m_2\n(\q_1+\q_2)}{\m_1\m_2(\q_1\q_2+\n)}
\\ &= \mat{\m_3\q_3}{\m_3}{\m_3\n}{\m_3\q_3}
\end{align*}
since $\q_1+\q_2=\a$ and $\q_1\q_2+\n=\a\q_3$.
\end{proof}

\begin{cor}
  \label{cor:def-Gon-tilde}
  For a fixed level~$\n$, the set of all $W_\q^{\m}$-matrices of
  level~$\n$, as $\q$ ranges over all exact principal divisors of~$\n$,
  form a group~$\widetilde{\Gon}$ under multiplication modulo scalar
  matrices.  This group contains $\Gon$ (modulo scalars) as a normal
  subgroup of finite index, such that the quotient is an elementary
  $2$-group.\qed
\end{cor}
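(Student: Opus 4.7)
The plan is to use \Cref{prop:Wmq-prod} and~\Cref{prop:Wmq-equiv} systematically, interpreting $\widetilde{\Gon}$ as the set of all $W_\q^\m$-matrices for pairs~$(\q,[\m])$ with $\q\mid\mid\n$ and $\q\m^2$ principal (identifying matrices that differ by a scalar).

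First, I would verify the group axioms. Closure is exactly \Cref{prop:Wmq-prod}: the product of a $W_{\q_1}^{\m_1}$-matrix and a $W_{\q_2}^{\m_2}$-matrix is (modulo scalars) a $W_{\q_3}^{\m_3}$-matrix with $\q_3=\q_1\q_2(\q_1+\q_2)^{-2}$ and $\m_3=(\q_1+\q_2)\m_1\m_2$; one checks that $\q_3\mid\mid\n$ (it corresponds to the symmetric difference of the prime supports of $\q_1$ and $\q_2$ inside $\n$) and that $\q_3\m_3^2=(\q_1\m_1^2)(\q_2\m_2^2)$ is principal. The identity matrix belongs to $\Gon$, the $W_{\OK}^{\OK}$ case. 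For inverses, $\adj M=\mat{w}{-y}{-z}{x}$ is again a $W_\q^\m$-matrix (the entry conditions are symmetric in $x,w$ and in $y,z$), so $M^{-1}=(\det M)^{-1}\adj M$ lies in $\widetilde{\Gon}$ once we quotient by scalars.

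Next, normality of $\Gon$ in $\widetilde{\Gon}$ is immediate from \Cref{prop:Wmq-equiv}(3): for any $M\in\widetilde{\Gon}$, we have $M\Gon=\Gon M$, so conjugation by $M$ preserves $\Gon$.

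The core structural step is that every class in $\widetilde{\Gon}/\Gon$ has order at most~$2$. Apply \Cref{prop:Wmq-prod} with $M_1=M_2=M$: then $\a=\q+\q=\q$, giving $\q_3=\OK$ and $\m_3=\q\m^2$, which is principal by hypothesis, say $\m_3=\<\mu>$. Then $M^2$ lies in $\mat{\<\mu>}{\<\mu>}{\<\mu>\n}{\<\mu>}=\mu\mat{\OK}{\OK}{\n}{\OK}$ with determinant generating $\<\mu^2>$, so $\mu^{-1}M^2\in\Gon$. Since every element squares into $\Gon$ (modulo scalars), the quotient has exponent~$2$, and a group of exponent~$2$ is automatically abelian, hence elementary abelian. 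Finally, the index is finite because by \Cref{prop:Wmq-equiv}(3) and~(4) the coset $\Gon M$ is determined by the pair $(\q,[\m])$, and the number of such pairs is bounded by the number of exact divisors of~$\n$ times $|\Cl[2]|$, both finite. The main obstacle throughout is the careful handling of the scalar identifications: the multiplicative structure is only really a group \emph{modulo scalars}, and in particular the squaring argument depends on recognizing a $W_{\OK}^{\<\mu>}$-matrix as a scalar $\mu$ times an ordinary element of $\Gon$, which rests on the explicit shape of $\mat{\m}{\m}{\m\n}{\m}$ together with the determinant condition.
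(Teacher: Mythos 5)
Your proof is correct and is precisely the unpacking of the \qed that the paper leaves implicit: closure from \Cref{prop:Wmq-prod}, inverses from the adjugate (the entry conditions in the definition of a $W_\q^\m$-matrix are symmetric under $x\leftrightarrow w$, $y\leftrightarrow -y$, $z\leftrightarrow -z$), normality from \Cref{prop:Wmq-equiv}(3) since $M\Go(\n)=\Go(\n)M$, and exponent two from the squaring computation $\q_3=\OK$, $\m_3=\q\m^2$ principal so that $\mu^{-1}M^2\in\Go(\n)$.

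One point worth making explicit, which you handled silently: the corollary as printed says ``as $\q$ ranges over all exact \emph{principal} divisors of $\n$,'' but with that literal restriction the set is not closed under multiplication, since the $\q_3=\q_1\q_2(\q_1+\q_2)^{-2}$ produced by \Cref{prop:Wmq-prod} need not be principal even when $\q_1,\q_2$ are (it is principal only when $[\q_1+\q_2]\in\Cl[2]$). Your interpretation --- taking $\widetilde{\Go(\n)}$ to be all $W_\q^\m$-matrices for exact divisors $\q\mid\mid\n$ such that a valid $\m$ exists, i.e.\ with $[\q]$ a square --- is the one under which all three claims (group, normality, elementary $2$-quotient) actually hold, and it is the one consistent with the later remarks distinguishing the odd class number case (all $\q\mid\mid\n$ admissible) from the elementary $2$-group case ($\q$ forced to be principal). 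Your finiteness bound by (number of exact divisors of $\n$) $\times\,|\Cl[2]|$ is also correct, since for fixed $\q$ the admissible classes $[\m]$ form a single coset of $\Cl[2]$, and by \Cref{prop:Wmq-equiv}(3)--(4) the coset of $\Go(\n)$ modulo scalars depends only on the pair $(\q,[\m])$.
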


Since $\Gon$-modular points are right invariant under multiplication
by~$\Gon$, and using~\Cref{prop:Wmq-equiv}, we have a well defined
action on the set $\M_0^{(1)}(\n)$ of all principal $\Gon$-modular
points by mapping
\[
   P=(\OK\oplus \OK,\OK\oplus\n^{-1})U \mapsto (\OK\oplus \OK,\OK\oplus\n^{-1})MU;
\]
this is well defined, as the image is unchanged if we either
replace~$U$ by another admissible basis matrix~$\gamma U$ or
replace~$M$ by another~$W_\q^\m$-matrix~$\gamma M$ for
any~$\gamma\in\Gon$.

As already mentioned, in the special case~$\q=\<1>$, we recover the
action of $\widetilde{T}_{\m,\m}$ for~$[\m]\in\Cl[2]$.  When $\m=\<1>$
and~$\q$ is principal, we obtain the Atkin-Lehner operator~$W_\q$.

\subsubsection{$T_{\a}W_{\q}$ with $\a\q$ principal}

In applications, it is convenient to be able to apply the operator
$\widetilde{T}_{\a}\widetilde{W}_{\q}$ at level~$\n$, where
$\q\mid\mid\n$ and $\a$ is coprime to~$\n$, with $\q\a$ principal.
Here, for simplicity, we restrict to the case where $\a=\p$ is prime.
Now, $\widetilde{T}_{\p}\widetilde{W}_{\q}$ will be the formal sum of
$\psi(\p)=N(\p)+1$ matrices~$M$ with entries in~$\OK$, satisfying the
following properties, where $\<g>=\p\q$ and $L$ runs through all
$\psi(\p)$ lattices of index~$\p$:
\begin{enumerate}
\item $\det(M)=g$;
\item $M\in \mat{\q}{\OK}{\n}{\q}$;
\item $(\RR)M \subseteq L$.
\end{enumerate}
We construct these as follows.  Let $\b\in[\q']^{-1}$ be coprime
to~$\p\q$, so $\b\q'=\<h>$ with $h\in\OK$ coprime to~$\p$.  For each
$(c_0:d_0)\in\P^1(\p)$, use the Chinese remainder Theorem to find
$(c:d)\in\P^1(\p\n\b)$ such that
\[
(c:d) = \begin{cases}
  (c_0:d_0)\quad\text{in $\P^1(\p)$};\\
  (1:0)\qquad\text{in $\P^1(\q)$};\\
  (0:1)\qquad\text{in $\P^1(\b\q')$}
\end{cases}
\]
and lift to a matrix~$\mat{a}{b}{c}{d}\in\Go(\b\q')$. Then set
$M=\mat{d}{c/h}{bgh}{ag}$. Note that $c/h\in\OK$, since
$c\in\b\q'=\<h>$. Properties~(1) and~(2) for~$M$ are immediate, using
$d,g\in\q$ and $h\in\b$.  As for~(3), the entries in the second row
of~$M$ lie in~$\p$, while those in the top row lie
in~$L=\{(x,y)\in\RR\mid c_0x\equiv d_0hy\pmod{\p}\}$, and $L$ ranges
over all the lattices of index~$\p$ as $(c_0:d_0)$ runs
over~$\P^1(\p)$, since~$h$ is coprime to~$\p$.

\subsubsection{Further remarks}

Elements of the quotient group~$\widetilde{\Gon}$ defined
in~\Cref{cor:def-Gon-tilde} induce involution operators on any space
on which $\Gamma$ acts, since they preserve $\Gon$-invariant
subspaces.  These involutions generalize both classical Atkin-Lehner
involutions, and also include the involutions coming from elements of
order~$2$ in the class group as utilized in~\cite{JBthesis}.  The size
of the group of involutions depends on both the structure of the ideal
class group and also the ideal classes of the divisors of~$\n$.  At
one extreme, if the ideal class group is an elementary abelian
$2$-group, then no non-trivial ideal class is a square.  In this case
the only $W_\q^{\m}$-matrices are those for which $\q$ is principal,
but $\m$ is arbitrary.  On the other hand, if the class number is odd
(as in Lingham's thesis \cite{LinghamThesis}) then every ideal class
is a square but only principal ideals have principal squares; in this
case we have $W_\q^{\m}$-matrices for all~$\q\mid\mid\n$ but $\m$
(strictly, its ideal class) is uniquely determined by~$\q$.  (In
\cite{LinghamThesis}, the choice was fixed as $\m=\q^n$ where $2n+1$
is the class number.) This is closest to the classical situation.

The analogue of the classical Fricke involution at level~$\n$
is~$W_{\n}$, which only acts on principal modular points when~$\n$ is
principal, when it is given by a~$W_{\n}^{\<1>}$-matrix of level~$\n$.
If $\n$ has square ideal class, with $\n\m^2$ principal,
then~$W_{\n}\widetilde{T}_{\m,\m}$ acts via a~$W_{\n}^{\m}$-matrix.

\end{document}